\documentclass[twoside,11pt]{article}
\usepackage[titletoc]{appendix}

\usepackage{jmlr2e}
\usepackage{amsmath}
\usepackage{color}
\usepackage{amsfonts}
\usepackage{hyperref}
\usepackage{bbm}
\usepackage{booktabs} 
\usepackage{algorithm}
\usepackage{algorithmic}

\makeatletter
\newcommand\backmatter{
    \def\chaptermark##1{\markboth{%
        \ifnum  \c@secnumdepth > \m@ne  \@chapapp\ \thechapter:  \fi  ##1}{%
        \ifnum  \c@secnumdepth > \m@ne  \@chapapp\ \thechapter:  \fi  ##1}}%
    \def\sectionmark##1{\relax}}
\makeatother


\newcommand{\bx}{{\mathbf x}}
\newcommand{\by}{{\mathbf y}}
\newcommand{\bz}{{\mathbf z}}
\newcommand{\bw}{{\mathbf w}}
\newcommand{\bb}{{\mathbf  b}}

\newcommand{\bu}{{\mathbf u}}

\newcommand{\be}{{\mathbf e}}
\newcommand{\br}{{\mathbf{r}}}

\newcommand{\balpha}{\boldsymbol{\alpha}}




\newcommand{\bE}{\mathbb{E}}
\newcommand{\reals}{\mathbb{R}}

\newcommand{\bN}{\mathbb{N}}

\newcommand{\bC}{\mathbb{C}}



\newcommand{\spec}[1]{\sigma\!\circpar{#1}}

\newcommand{\eqdef}{\stackrel{\vartriangle}{=}}

\newcommand{\cA}{\mathcal{A}}

\newcommand{\cC}{\mathcal{C}}

\newcommand{\cF}{\mathcal{F}}

\newcommand{\cI}{\mathcal{I}}

\newcommand{\cL}{\mathcal{L}}
\newcommand{\cM}{\mathcal{M}}

\newcommand{\cO}{\mathcal{O}}
\newcommand{\cP}{\mathcal{P}}
\newcommand{\cQ}{\mathcal{Q}}

\newcommand{\cS}{\mathcal{S}}

\newcommand{\cU}{\mathcal{U}}

\newcommand{\cX}{\mathcal{X}}





\DeclareMathOperator*{\argmin}{argmin} 

\renewcommand{\eqref}[1]{Equation~(\ref{#1})}
\newcommand{\ineqref}[1]{Inequality~(\ref{#1})}

\newcommand{\figref}[1]{Figure~\ref{#1}}

\newcommand{\thmref}[1]{Theorem~\ref{#1}}
\newcommand{\lemref}[1]{Lemma~\ref{#1}}
\newcommand{\defref}[1]{Definition~\ref{#1}}
\newcommand{\corref}[1]{Corollary~\ref{#1}}


\newcommand{\norm}[1]{\left\Vert#1\right\Vert}
\newcommand{\normsq}[1]{\left\Vert#1\right\Vert^2}
\newcommand{\inprod}[2]{ \left< #1 , #2 \right>}
\newcommand{\circpar}[1]{\left( #1 \right)}
\newcommand{\rectpar}[1]{\left[ #1 \right]}
\newcommand{\absval}[1]{\left|\,#1\,\right|}

\newcommand{\set}[1]{\left\{ #1 \right\} }
\newcommand{\myset}[2]{\left\{ #1 \right.\left|~ #2 \right\} }

\newcommand{\Diag}[1]{\text{Diag}\circpar{#1}}

\newcommand{\posdef}[2]{\cS^{#1}\!\left({#2}\right)}
\newcommand{\posdefun}[2]{\cQ^{#1}\!\left( #2 \right)}
\newcommand{\quadab}[1]{f_{#1}(\bx)}
\newcommand{\syspol}[2]{\cL_{#1}(#2)}
\newcommand{\syspola}{\syspol{}{\lambda,X}}

\newcommand{\mymat}[1]{\circpar{\begin{array}{cccccccc} #1 \end{array}}}

\newcommand{\bigO}[1]{\mathcal{O}{\left(#1\right)}}
\newcommand{\bigtO}[1]{\tilde{\mathcal{O}}{\left(#1\right)}}

\newcommand{\chiM}{\chi_{\cM}}

\newcommand{\IC}{\cI\cC}


\begin{document}
\title{On Lower and Upper Bounds for Smooth and Strongly Convex Optimization Problems}

\author{\name Yossi Arjevani \email yossi.arjevani@weizmann.ac.il \\
       \addr Department of Computer Science and Applied Mathematics\\
       Weizmann Institute of Science\\
       Rehovot 7610001, Israel
       \AND
       \name Shai Shalev-Shwartz  \email shais@cs.huji.ac.il \\
       \addr School of Computer Science and Engineering\\
       The Hebrew University\\
       Givat Ram, Jerusalem 9190401, Israel
			 \AND
       \name Ohad Shamir  \email ohad.shamir@weizmann.ac.il  \\
      \addr Department of Computer Science and Applied Mathematics\\
       Weizmann Institute of Science\\
       Rehovot 7610001, Israel}
\editor{}
\maketitle


\begin{abstract}
We develop a novel framework to study smooth and strongly convex optimization algorithms, both deterministic and stochastic. Focusing on quadratic functions we are able to examine optimization algorithms as a recursive application of linear operators. This, in turn, reveals a powerful connection between a class of optimization algorithms and the analytic theory of polynomials whereby new lower and upper bounds are derived. Whereas existing lower bounds for this setting are only valid when the dimensionality scales with the number of iterations, our lower bound holds in the natural regime where the dimensionality is fixed. Lastly, expressing it as an optimal solution for the corresponding optimization problem over polynomials, as formulated by our framework, we present a novel systematic derivation of Nesterov's well-known Accelerated Gradient Descent method. This rather natural interpretation of AGD contrasts with earlier ones which lacked a simple, yet solid, motivation. \\

\end{abstract}

\begin{keywords}
  Smooth and Strongly Convex Optimization, Full Gradient Descent, Accelerated Gradient Descent, Heavy Ball method
\end{keywords}

\section{Introduction}

In the field of mathematical optimization one is interested in efficiently solving a minimization problem of the form
\begin{align} \label{opt:gen_min}
\min_{\bx\in X} f(\bx)
\end{align}
where the \emph{objective function}  $f$ is some real-valued function defined over the \emph{constraints set} $X$. Many core problems in the field of Computer Science, Economic, and Operations Research can be readily expressed in this form, rendering this minimization problem far-reaching. That being said, in its full generality this problem is just too hard to solve or even to approximate. As a consequence,  various structural assumptions on the objective function and the constraints set, along with better-suited optimization algorithms, have been proposed so as to make this problem viable.\\

One such case is smooth and strongly convex functions over some $d$-dimensional Euclidean space\footnote{More generally, one may consider smooth and strongly convex functions over some Hilbert space.}. Precisely, we consider continuously differentiable $f:\reals^d\to\reals$ which are \emph{$L$-smooth}, i.e.,
\begin{align*}
	\norm{\nabla f(\bx)- \nabla f(\by)} &\le L\norm{\bx-\by},\quad\forall \bx,\by\in\reals^d
\end{align*}
and \emph{$\mu$-strongly convex}, that is,
\begin{align*}
		f(\by) \ge f(\bx) + \inprod{\by-\bx}{\nabla f(\bx)} + \frac{\mu}{2}\normsq{\by-\bx},\quad\forall \bx,\by\in\reals^d
\end{align*}
A wide range of applications together with efficient solvers have made this family of problems very important. Naturally, an interesting question arises: how fast can these kind of problems be solved? better said, what is the computational complexity of minimizing smooth and strongly-convex functions to a given degree of accuracy?\footnote{Natural as these questions might look today, matters were quite different only few decades ago. In his book 'Introduction to Optimization' which dates back to 87', Polyak B.T devotes a whole section as to: 'Why Are Convergence Theorems Necessary?'  (See section 1.6.2 in \cite{polyak1987introduction}).}
Prior to answering these, otherwise ill-defined, questions, one must first address the exact nature of the underlying computational model. \\

Although being a widely accepted computational model in the theoretical computer sciences, the Turing Machine Model presents many obstacles when analyzing optimization algorithms. In their seminal work, \cite{nemirovskyproblem} evaded some of these difficulties by proposing the \emph{black box computational model}, according to which information regarding the objective function is acquired iteratively by querying an \emph{oracle}. This model does not impose any computational resource constraints\footnote{In a sense, this model is dual to the Turing Machine model where all the information regarding the parameters of the problem is available prior to the execution of the algorithm, but the computational resources are limited in time and space.}. Nemirovsky and Yudin showed that for any optimization algorithm which employs a first-order oracle, i.e. receives $(f(\bx),\nabla f(\bx))$ upon querying at a point $\bx\in\reals^d$, there exists an $L$-smooth $\mu$-strongly convex function $f:\reals^d\	\to\reals$, such that for any $\epsilon>0$ the number of oracle calls needed for obtaining an \emph{$\epsilon$-optimal} solution $\tilde{\bx}$, i.e.,
\begin{align} \label{ineq:eps_subopt}
f(\tilde{\bx}) < \min_{\bx\in\reals^d} f(\bx)+ \epsilon
\end{align}
must satisfy
\begin{align} \label{ineq:sqrtlb}
	 \# \text{ Oracle Calls} \ge \tilde{\Omega}\circpar{\min\left\{d, \sqrt{\kappa}\ln(1/\epsilon  \right\}}
\end{align}
where $\kappa\eqdef L/\mu$ denotes the so-called \emph{condition number}. \\

The result of Nemirovsky and Yudin can be seen as the starting point of the
present paper. The restricted validity of this lower bound to the first
$\cO\!\circpar{d}$ iterations is not a mere artifact of the analysis. Indeed,
from an information point of view, a minimizer of any convex quadratic
function can be found using no more than $\cO(d)$ first-order queries.
Noticing that this bound is attained by the Conjugate Gradient Descent method
(CGD, see \cite{polyak1987introduction}), it seems that one cannot get a
non-trivial lower bound once the number of queries exceeds the dimension $d$.
Moreover, a similar situation can be shown to occur for more general classes
of convex functions. However, the known algorithms which attain such behavior
(such as CGD and the center-of-gravity method, e.g., \cite{nemirovski2005efficient}) require
computationally intensive iterations, and are quite different than many
common algorithms used for large-scale optimization problems, such as
gradient descent and its variants. Thus, to capture the attainable
performance of such algorithms, we must make additional assumptions on their
structure. This can be made more solid using the following simple observation. \\

\emph{When applied on quadratic functions, the update rule of many optimization algorithms reduces to a recursive  application of a linear transformation which depends, possibly randomly, on the previous $p$ query points}.
\\
\\
Indeed, the update rule of CGD for quadratic functions is \emph{non-stationary}, i.e. uses a different linear transformation at each iteration, as opposed to other optimization algorithms which utilize less complex update rules such as: stationary updates rule, e.g., Gradient Descent, Accelerated Gradient Descent, Newton's method (see \cite{nesterov2004introductory}), The Heavy Ball method \cite{polyak1987introduction}, SDCA (see \cite{shalev2013stochastic}) and SAG (see \cite{roux2012stochastic}); cyclic update rules, e.g,. SVRG (see \cite{johnson2013accelerating}); and piecewise-stationary update rules, e.g., proximal methods and Accelerated SDCA (see \cite{shalev2013accelerated}). Inspired by this observation, in the present work we explore the boundaries of optimization algorithms which admit stationary update rules. We call such algorithms $p$-Stationary Canonical Linear Iterative optimization algorithms (abbr. $p$-SCLI), where $p$ designates the number of previous points which are necessary to generate new points. The quantity $p$ can be instructively interpreted as a limit on the amount of memory at the algorithm's disposal. \\

Similar to the analysis of power iteration methods, the convergence properties of such algorithms are intimately related to the eigenvalues of the corresponding linear transformation. Specifically, as the convergence rate of the recursive application of a linear transformation is essentially characterized by its largest magnitude eigenvalue, the asymptotic convergence rate of $p$-SCLI algorithms can be bounded from above and from below by analyzing the spectrum of the corresponding linear transformation. At this point we would like to remark that the technique of linearizing iterative procedures and analyzing their convergence behavior accordingly, which dates back to the pioneering work of the Russian mathematician Lyapunov, has been successfully applied in the field of mathematical optimization many times, e.g., \cite{polyak1987introduction} and more recently \cite{lessard2014analysis}. However, whereas previous works were primarily concerned with deriving upper bounds on the magnitude of the corresponding eigenvalues, in this work our reference point is lower bounds. \\

As eigenvalues are merely roots of characteristic polynomials\footnote{In fact, we will use a polynomial matrix analogous of characteristic polynomials which will turns out to be more useful for our purposes.}, our approach involves establishing a lower bound on the maximal modulus (absolute value) of the roots of polynomials. Clearly, in order to find a meaningful lower bound, one must first find a condition which is satisfied by all characteristic polynomials that correspond to $p$-SCLIs. We show that such condition does exist by proving that characteristic polynomials of consistent $p$-SCLIs, which correctly minimize the function at hand, must have a specific evaluation at $\lambda=1$. This in turn allows us to analyze the convergence rate purely in terms of the analytic theory of polynomials, i.e.,
\begin{align} \label{opt:intro_poly_lb}
	\textbf{Find} \quad\min\myset{\rho(q(z))}{q(z) \text{ is a real monic polynomial of degree } p \text{ and } q(1)=r    }  
\end{align}
where $r\in\reals$ and $\rho(q(z))$ denotes the maximum modulus over all roots of $q(z)$. Although a vast range of techniques have been developed for bounding the moduli of roots of polynomials (e.g. \cite{marden1966geometry,rahman2002analytic,milovanovic1994topics,walsh1922location,milovanovic2000distribution,fell1980zeros}), to the best of our knowledge, few of them address lower bounds (see \cite{higham2003bounds}. The minimization problem (\ref{opt:intro_poly_lb}) is also strongly connected with the question of bounding the spectral radius of 'generalized' companion matrices from below. Unfortunately, this topic too lacks an adequate coverage in the literature (see \cite{wolkowicz1980bounds,zhong2008bounds,horne1997lower,huang2007improving}). Consequently, we devote part of this work to establish new tools for tackling (\ref{opt:intro_poly_lb}). It is noteworthy that these tools are developed by using elementary arguments. This sharply contrasts with previously proof techniques used for deriving lower bounds on the convergence rate of optimization algorithms which employed heavy machinery from the field of extremal polynomials, such as Chebyshev polynomials (e.g., \cite{mason2002chebyshev}).\\

Based on the technique described above we present a novel lower bound on the convergence rate of $p$-SCLI  optimization algorithms. More formally, we prove that any $p$-SCLI optimization algorithm over $\reals^d$, whose iterations can be executed efficiently, requires 
\begin{align} \label{opt:lblb_conv_pcli}
\# \text{Oracle Calls} \ge \tilde{\Omega}\circpar{ \sqrt[p]{\kappa} \ln(1/\epsilon)}
\end{align}
in order to obtain an $\epsilon$-optimal solution, \emph{regardless of the dimension of the problem}. This result partially complements the lower bound presented earlier in \ineqref{ineq:sqrtlb}. More specifically, for $p=1$, we show that the runtime of algorithms whose update rules do not depend on previous points (e.g. Gradient Descent) and can be computed efficiently scales linearly with the condition number. For $p=2$, we get the optimal result for smooth and strongly convex functions. For $p>2$, this lower bound is clearly weaker than the lower bound shown in (\ref{ineq:sqrtlb}) at the first $d$ iterations. However, we show that it can be indeed attained by $p$-SCLI schemes, and surprisingly, some of them can be executed efficiently for certain classes of quadratic functions. Finally, we believe that a more refined analysis of problem (\ref{opt:intro_poly_lb}) would show that this technique is powerful enough to meet the classical lower bound $\sqrt{\kappa}$ for any $p$, in the worst-case over all quadratic problems.\\

The last part of this work concerns a cornerstone in the field of mathematical optimization, i.e., Nesterov's well-known Accelerated Gradient Descent method (AGD). At the time the work of Nemirovsky and Yudin was published, it was known that Gradient Descent (GD) obtains an $\epsilon$-optimal solution by issuing no more than 
\begin{align*}
\bigO{\kappa \ln (1/\epsilon)}
\end{align*}
first-order queries. The gap between this upper bound and the lower bound shown in (\ref{ineq:sqrtlb}) has intrigued many researchers in the field. Eventually, it was this line of inquiry that  led to the discovery of AGD by Nesterov (see \cite{nesterov1983method}), a slight modification of the standard GD algorithm, whose iteration  complexity is 
\begin{align*}
\bigO{\sqrt{\kappa} \ln (1/\epsilon)}
\end{align*}
Unfortunately, AGD lacks the strong geometrical intuition which accompanies many optimization algorithms, such as FGD and the Heavy Ball method. Primarily based on sophisticated algebraic manipulations, its proof strives for a more intuitive derivation (e.g. \cite{beck2009fast,baes2009estimate,tseng2008accelerated,sutskever2013importance,allen2014novel}). This downside has rendered the generalization of AGD to different optimization scenarios, such as constrained optimization problems, a highly non-trivial task which up to the present time does not admit a complete satisfactory solution. Surprisingly enough, by designing optimization algorithms whose characteristic polynomials are optimal with respect to a constrained version of (\ref{opt:intro_poly_lb}), we have uncovered a novel simple derivation of AGD. This reformulation as an optimal solution for a constrained optimization problem over polynomials, shows that AGD and the Heavy Ball are essentially two sides of the same coin.  \\
\\
To summarize, our main contributions, in order of appearance, are the following: 
\begin{itemize}
	\item We define a class of algorithms ($p$-SCLI) in terms of linear operations on the last $p$ iterations, and show that they subsume some of the most interesting algorithms used in practice.
	\item We prove that any $p$-SCLI optimization algorithm must use at least
	\begin{align*}
		\tilde{\Omega}\circpar{\sqrt[p]{\kappa}\ln(1/\epsilon)} 
	\end{align*}
	iterations in order to obtain an $\epsilon$-optimal solution. As mentioned earlier, unlike existing lower bounds, our bound holds for every fixed dimensionality.
	\item We show that there exist matching $p$-SCLI optimization algorithms which attain the convergence rates  stated above for all $p$. Alas, for $p\ge3$, an expensive pre-calculation task renders these algorithms inefficient. 
		
	\item As a result, we focus on a restricted subclass of $p$-SCLI optimization algorithms which can be executed efficiently. This yields a novel systematic derivation of Full Gradient Descent, Accelerated Gradient Descent, The Heavy-Ball method (and potentially others efficient optimization algorithms), each of which corresponds to an optimal solution of optimization problems on the moduli of polynomials' roots.
		
	\item We present new schemes which offer better utilization of second-order information by exploiting breaches in existing lower bounds. This leads to a new optimization algorithm which obtains a rate of $\sqrt[3]{\kappa}\ln(1/\epsilon)$
	in the presence of large enough spectral gaps.

\end{itemize}

\subsection{Notation} \label{subsec:notation}
We denote scalars with lower case letters and vectors with bold face letters. We use $\reals^{++}$ to denote the set of all positive real numbers. All functions in this paper are defined over Euclidean spaces equipped with the standard Euclidean norm and all matrix-norms are assumed to denote the spectral norm. \\

We denote a block-diagonal matrix whose blocks are $A_1,\dots,A_k$ by the conventional direct sum symbol, i.e., $\oplus_{i=1}^k A_k$. We devote a special operator symbol for scalar matrices $\text{Diag}\circpar{a_1,\dots,a_d} = \oplus_{i=1}^d a_i$. The spectrum of a square matrix $A$ and its spectral radius, the maximum magnitude over its eigenvalues, are denoted by $\spec{A}$ and $\rho(A)$, respectively. Recall that the eigenvalues  of a square matrix $A\in\reals^{d\times d}$ are exactly the roots of the characteristic polynomial which is defined as follows
\begin{align*}
	\chi_A(\lambda) &= \det(A-\lambda I_d) 
\end{align*}
where $I_d$ denotes the identity matrix. Since polynomials in this paper have their origins as characteristic polynomials of some square matrices, by a slight abuse of notation, we will denote the roots of a polynomial $q(z)$ and its root radius, the maximum modulus over its roots, by $\spec{q(z)}$ and $\rho(q(z))$, respectively, as well.
\\
\\
The following notation for quadratic functions and matrices will be of frequent use,  
\begin{align*}
		\posdef{d}{\Sigma} &\eqdef \left\{A\in\reals^{d\times d}\middle| A \text{ is symmetric and } \spec{A} \subseteq \Sigma\right\}\\
		\posdefun{d}{\Sigma} &\eqdef \myset{\quadab{A,\bb}}{A\in \posdef{d}{\Sigma},\bb\in\reals^d}
\end{align*}
where $\Sigma$ denotes a non-empty set of positive reals, and where $\quadab{A,\bb}$ denotes the following quadratic function
\begin{align*}
	\quadab{A,\bb} = \bx^\top A \bx + \bb^\top \bx 
\end{align*}

\section{Framework}
In the sequel we establish our framework for analyzing optimization algorithms for minimizing smooth and strongly convex functions. First, to motivate this technique, we show that the analysis of SDCA presented in \cite{shalev2013stochastic} is tight by using a similar method. Next, we lay the foundations of the framework by generalizing and formalizing various aspects of the SDCA case. We then examine some popular optimization algorithms through this formulation. Apart from setting the boundaries for this work, this inspection gives rise to, otherwise subtle, distinctions between different optimization algorithms. Lastly, we discuss the computational complexity of $p$-SCLIs, as well as their convergence properties.

\subsection{Case Study - Stochastic Dual Coordinate Ascent} \label{section:sdca_case_study}
We consider an optimization algorithm called Stochastic Dual Coordinates Ascent (SDCA\footnote{For a detailed analysis of SDCA, please refer to \cite{shalev2013stochastic}.}) for solving Regularized Loss Minimization (RLM) problems (\ref{opt:RLM}), which are of great significance for the field of Machine Learning. It is shown that applying SDCA on quadratic loss functions allows one to reformulate it as a recursive application of linear transformations. The relative simplicity of such processes is then exploited to derive a lower bound on the convergence rate.\\

A smooth-RLM problem is an optimization task of the following form 
\begin{align} \label{opt:RLM}
\min_{\bw\in\reals^d}P(\bw) &\eqdef \frac{1}{n} \sum_{i=1}^n \phi_i(\bw^\top \bx_i) + \frac{\lambda}{2} \normsq{\bw}
\end{align}
where $\phi_i$ are $1/\gamma$-smooth and convex,  $\bx_1,\ldots,\bx_n$ are vectors in $\reals^d$ and $\lambda$ is a positive constant. For ease of presentation, we further assume that $\phi_i$ are non-negative, $\phi_i(0)\le 1$ and $\norm{\bx_i}\le 1$ for all $i$. \\

The optimization algorithm SDCA works by minimizing an equivalent optimization problem 
\begin{align*}
\min_{\alpha\in\reals^n} D(\balpha) \eqdef \frac{1}{n}\sum_{i=1}^n \phi_i^\star(\alpha_i) +\frac{1}{2\lambda n^2} \normsq{\sum_{i=1}^n \alpha_i \bx_i}
\end{align*}
where $\phi^\star$ denotes the Fenchel conjugate of $\phi$, by repeatedly picking $z\sim \cU([n])$ uniformly and minimizing $D(\balpha)$ over the $z$'th coordinate. The latter optimization problem is referred to as the \emph{dual problem}, while the problem presented in (\ref{opt:RLM}) is called the \emph{primal problem}.
As shown in \cite{shalev2013stochastic}, it is possible to convert a high quality solution of the dual problem into a high quality solution of the primal problem. This allows one to bound from above the number of iterations required for obtaining a prescribed level of accuracy $\epsilon>0$ by 
\begin{align*} \label{bigo:RLM_conv_rate}
\bigtO{\circpar{n+\frac{1}{\lambda \gamma}}\ln(1/\epsilon)}\\
\end{align*}

Let us show that this analysis is indeed tight. First, let us define the following $2$-smooth functions
\begin{align*}
\phi_i(y) = y^2,\quad i=1,\dots,n
\end{align*}
and let us define $\bx_1=\bx_2=\cdots=\bx_n=\frac{1}{\sqrt{n}}\mathbbm{1}$. This yields
\begin{align} 
D(\balpha) &= \frac{1}{2}\balpha^\top \circpar{ \frac{1}{2n}I+ \frac{1}{\lambda n^2} \mathbbm{1}\mathbbm{1}^\top}\balpha
\end{align}
Clearly, the unique minimizer of $D(\balpha)$ is $\balpha^*\eqdef0$. Now, given $i\in[n]$ and $\balpha\in\reals^n$ , it is easy to verify that 
\begin{align}
\argmin_{\alpha' \in\reals} D(\alpha_1,\dots,\alpha_{i-1},\alpha',\alpha_{i+1},\dots,\alpha_{n}) = \frac{-2}{2+\lambda n} \sum_{j\neq i} \alpha_j 
\end{align}
Thus, the next test point $\balpha^+$, generated by taking a step along the $i$'th coordinate, is linear transformation of the previous point, i.e.,
\begin{align} \label{eq:rlm_costep}
\balpha^+=\circpar{I-\be_i \bu_i^\top}\balpha
\end{align}
Where 
\begin{align*}
\bu_i^\top &\eqdef \circpar{\frac{2}{2+\lambda n }, \dots, \frac{2}{2+\lambda n },\underbrace{1}_{i\text{'s entry}},\frac{2}{2+\lambda n },\ldots, \frac{2}{2+\lambda n } }
\end{align*}
Let $\balpha^k,~ k=1,\dots,K$ denote the $k$'th test point. The sequence of points $(\balpha^k)_{k=1}^K$ is randomly generated by minimizing $D(\balpha)$ over the $z_i$'th coordinate at the $i$'th iteration, where $z_1,z_2,\dots,z_K\sim \mathcal{U}([n])$ is a sequence of $K$ uniform distributed i.i.d random variables. Applying (\ref{eq:rlm_costep}) over and over again starting from some initialization point $\balpha^0$ we obtain 
\begin{align*}
\balpha^k&=\circpar{I-\be_{z_K}\bu_{z_K}^\top}\circpar{I-\be_{z_{K-1}}\bu_{z_{K-1}}^\top}\cdots\circpar{I-\be_{z_1}\bu_{z_1}^\top}\balpha^0
\end{align*}
To compute $\mathbb{E}[\balpha^K]$ note that by the i.i.d hypothesis and by the linearity of the expectation operator,
\begin{align}
\mathbb{E}\left[\balpha^K\right]&=\mathbb{E}\left[\circpar{I-\be_{z_K}\bu_{z_K}^\top}\circpar{I-\be_{z_{K-1}}\bu_{z_{K-1}}^\top}\cdots\circpar{I-\be_{z_1}\bu_{z_1}^\top}\balpha^0\right]\nonumber\\
&=\mathbb{E}\left[\circpar{I-\be_{z_K}\bu_{z_K}^\top}\right]\mathbb{E}\left[\circpar{I-\be_{z_{K-1}}\bu_{z_{K-1}}^\top}\right]\cdots\mathbb{E}\left[\circpar{I-\be_{z_1}\bu_{z_1}^\top}\right]\balpha^0\nonumber\\
&=\mathbb{E}\left[\circpar{I-\be_{z}\bu_{z}^\top}\right]^K\balpha^0 \label{kpoint}
\end{align}
The convergence rate of this process is governed by the spectral radius of $$E\eqdef \mathbb{E}\left[I-\be_{z}\bu_{z}^\top\right]$$A straightforward calculation shows that the eigenvalues of $E$, ordered by magnitude, are
\begin{align}
\underbrace{\frac{1}{2/\lambda + n},\dots , \frac{1 }{2/\lambda + n}}_{n-1 \text{ times}}, 1 - \frac{2 + \lambda}{2+\lambda n} 
 \end{align}
By choosing $\balpha^0$ to be the following normalized eigenvector which corresponds to the largest eigenvalue,
$$\balpha^0=\circpar{\frac{1}{\sqrt{2}},-\frac{1}{\sqrt{2}},0,\dots,0}$$
and plugging it into \eqref{kpoint}, we can now bound from below the distance of $\mathbb{E}[\balpha^K]$ to the optimal point $\balpha^*=0$,
\begin{align*}
\norm{\mathbb{E}\left[\balpha^K\right]-\balpha^*}
&=\norm{\mathbb{E}\left[\circpar{I-\be_{z}\bu_{z}^\top}\right]^K\balpha^0}\\
&=\circpar{1 - \frac{1}{2/\lambda+n}}^K\norm{\balpha^0}\\
&=\circpar{1 - \frac{2}{(4/\lambda+2n-1)+1}}^K\\
&\ge \circpar{\exp\circpar{\frac{-1}{2/\lambda+n-1}}}^K
\end{align*}
Where the last inequality is due to the following inequality,
\begin{align} \label{ineq:exp_x}
1-\frac{2}{x+1}\ge \exp\circpar{\frac{-2}{x-1}},\quad \forall x\ge1
\end{align}
We see that the minimal number of iterations required for obtaining a solution whose distance form the $\balpha^*$ is less than $\epsilon>0$ must be greater than 
\begin{align*} 
 \circpar{2/\lambda+n-1}\ln\circpar{1/\epsilon} 
\end{align*}
Thus showing that, up to logarithmic factors, the analysis of the convergence rate of SDCA is tight.

\subsection{Definitions}
In the sequel we introduce the framework of $p$-SCLI optimization algorithms which generalizes the analysis shown in the preceding section.\\

We denote the set of $d\times d$ symmetric matrices whose spectrum lies in $\Sigma\subseteq\reals^{++}$ by $\posdef{d}{\Sigma}$ and denote the following set of quadratic functions 
\begin{align*}
	f_{A,\bb}(\bx) \eqdef\frac{1}{2}\bx^\top A\bx + \bb^\top \bx,\quad A\in\posdef{d}{\Sigma}
\end{align*}
by $\posdefun{d}{\Sigma}$. Note that since twice continuous differentiable functions $f(\bx)$ are $L$-smooth and $\mu$-strongly convex if and only if $$\spec{\nabla^2 (f(\bx))}\subseteq [\mu,L]\subseteq\reals^{++},\quad \bx\in\reals^d$$ we have that $\posdefun{d}{[\mu,L]}$ comprises $L$-smooth $\mu$-strongly convex quadratic functions. Thus, any optimization algorithm designed for minimizing smooth and strongly convex functions can be used to minimize functions in $\posdefun{d}{[\mu,L]}$. The key observation here is that since the gradient of $\quadab{A,\bb}$ is linear in $\bx$, when applied to quadratic functions, the update rules of many optimization algorithms also become linear in $\bx$. This formalizes as follows.
\begin{definition} [$p$-SCLI optimization algorithms] \label{definition:pscli}
An optimization algorithm $\cA$ is called a $p$-stationary canonical linear iterative (abbr. $p$-SCLI) optimization algorithm over $\reals^d$ if there exist $p+1$ mappings $C_0(X),C_1(X),\dots,C_{p-1}(X),N(X)$ from $\reals^{d\times d}$ to $\reals^{d\times d}$-valued random variables, such that for any $\quadab{A,\bb}\in\posdefun{d}{\Sigma}$ the corresponding initialization and update rules take the following form:
\begin{align} 
&\bx^0,\bx^1,\dots,\bx^{p-1}\in\reals^d \label{def:pscli_initialization_points}\\
&\bx^k = \sum_{j=0}^{p-1} C_{j}(A) \bx^{k-p+j}+ N(A)\bb,\quad k=p,p+1,\dots \label{def:pscli_update_rule}
\end{align}
We further assume that in each iteration $C_j(A)$ and $N(A)$ are drawn independently of previous realizations\footnote{In this context, this assumption is usually referred to as stationarity.}, and that $\bE C_i(A)$ are finite and simultaneously triangularizable\footnote{Intuitively, having this technical requirement is somewhat similar to assuming that the coefficients matrices commute (see \cite{drazin1951some} for a precise statement), and as such does not seem to restrict the scope of this work. Indeed, it is common to have $\bE C_i(A)$ as polynomials in $A$ or as diagonal matrices, in which case the assumption holds true.}.
\end{definition}

Let us introduce a few more definitions and terminology which will be used throughout this paper. The number of previous points $p$ by which new points are generated is called the \emph{lifting factor}. The matrix-valued random variables $C_0(X),C_1(X),\dots,C_{p-1}(X)$ and $N(X)$ are called \emph{coefficient matrices} and \emph{inversion matrix}, respectively. The term inversion matrix refers to the mapping $N(X)$, as well as to a concrete evaluation of it. It will be clear from the context which interpretation is being used. The same comment holds for coefficient matrices. \\

As demonstrated by the following definition, coefficients matrices of $p$-SCLIs can be equivalently described in terms of polynomial matrices\footnote{For a detailed cover of polynomial matrices see \cite{gohberg2009matrix}.}. This correspondence will soon play a pivotal role in the analysis of $p$-SCLIs.
\begin{definition} \label{def:l_lambda}
The characteristic polynomial of a given $p$-SCLI optimization algorithm $\cA$ is defined by 
\begin{align} 
	\syspol{\cA}{\lambda,X} &\eqdef  I_d\lambda^p - \sum_{j=0}^{p-1} \bE C_j(X) \lambda^{j}  
\end{align}
where $C_j(X)$ denote the coefficient matrices. Moreover, given $X\in\reals^{d\times d}$ we define the root radius of $\syspol{\cA}{\lambda,X}$ by
\begin{align*}
	\rho_\lambda(\syspola)&= \rho(\det{\cL(\lambda,X)}) = \max\myset{|\lambda'|}{\det{\cL(\lambda^\prime,X)}=0} 
\end{align*}
\end{definition}
For the sake of brevity, we will sometimes specify a given $p$-SCLI optimization algorithm $\cA$ using an ordered pair of a characteristic polynomial and an inversion matrix as follows $$\cA\eqdef(\syspol{\cA}{\lambda,X},N(X))$$ 

Lastly, note that nowhere in the definition of $p$-SCLIs did we assume that the optimization process converges to the minimizer of the function under consideration - an assumption which we refer to as \emph{consistency}. 
\begin{definition} [Consistency of $p$-SCLI optimization algorithms] \label{definition:consistency}
A $p$-SCLI optimization algorithm $\cA$ is said to be consistent with respect to a given $A\in\posdef{d}{\Sigma}$ if for any $\bb\in\reals^d$, $\cA$ converges to the minimizer of $\quadab{A,\bb}$, regardless of the initialization point. That is, for $\left(\bx^k\right)_{k=1}^\infty$ as defined in (\ref{def:pscli_initialization_points},\ref{def:pscli_update_rule}) we have that 
\begin{align*}
	\bx^k\to-A^{-1}\bb
\end{align*} 
for any $\bb\in\reals^d$. Furthermore, if $\cA$ is consistent with respect to all $A\in\posdef{d}{\Sigma}$, then we say that $\cA$ is consistent with respect to $\posdefun{d}{\Sigma}$.
\end{definition}

\subsection{Specifications for Some Popular optimization algorithms} \label{section_spec_algo}
Having defined the framework of $p$-SCLI optimization algorithms, a natural question now arises: how broad is the scope of this framework and what does characterize 
optimization algorithms which it applies to? Loosely speaking, any optimization algorithm whose update rules depend linearly on the first and the second order derivatives of the function under consideration is eligible for this framework. Instead of providing a precise characterization for such algorithms, we apply various popular optimization algorithms on a general quadratic function $\quadab{A,\bb}\in\posdefun{d}{[\mu,L]}$ and then re-express them as $p$-SCLI optimization algorithms.\\

\begin{description}
\item [Full Gradient Descent (FGD)  ] \label{spec:fgd} is a $1$-SCLI optimization algorithm,
			\begin{align*}
			\bx^0 &\in \reals^d\\
			\bx^{k+1} &= \bx^k - \beta \nabla f(\bx^k)= \bx^k - \beta(A\bx^k +\bb)=(I-\beta A)\bx^k -\beta \bb\\
			\beta &= \frac{2}{\mu +L}
			\end{align*}
			See \cite{nesterov2004introductory} for more details.
\item [Newton method] \label{spec:newton} is a $0$-SCLI optimization algorithm.
\begin{align*}
\bx^0 &\in \reals^d\\
\bx^{k+1} &=  \bx^k- (\nabla^{2} f(\bx^k))^{-1} \nabla f(\bx^k) = \bx^k- A^{-1}(A\bx^k + \bb)\\&= (I-A^{-1}A)\bx^k - A^{-1}\bb = -A^{-1}\bb
\end{align*}
Note that Newton method can be also formulated as a degenerate $p$-SCLI for some $p\in\bN$, whose coefficients matrices vanish. See \cite{nesterov2004introductory} for more details. 
\item [The Heavy Ball Method] is a $2$-SCLI optimization algorithm.
\begin{align*}
\bx^{k+1}  &= \bx^k - \alpha\nabla f(\bx^{k})+ \beta (\bx^{k}-\bx^{k-1}) \\ 
&= \bx^k - \alpha(A\bx^{k} + \bb)+ \beta (\bx^{k}-\bx^{k-1}) \\&= \circpar{(1+\beta) I-\alpha A } \bx^{k} -\beta I \bx^{k-1} -\alpha \bb\\
\alpha &= \frac{4}{\circpar{\sqrt{L}+\sqrt{\mu}}^2},\quad
\beta = \circpar{\frac{\sqrt{L}-\sqrt{\mu}}{\sqrt{L}+\sqrt{\mu}} }^2
\end{align*}
See \cite{polyak1987introduction} for more details.
\item [Accelerated Gradient Descent (AGD)]\label{spec:agd} is a $2$-SCLI optimization algorithm. 
\begin{align*}
\bx^0&=\by^0 \in \reals^d\\
\by^{k+1} &= \bx^{k} - \frac1L\nabla f(\bx^{k})\\
\bx^{k+1} &= \circpar{1+\alpha}\by^{k+1} - \alpha \by^{k} \\
\alpha &= \frac{\sqrt{L}-\sqrt{\mu}}{\sqrt{L}+\sqrt{\mu}}
\end{align*}
Which can be rewritten as,
\begin{align*}
\bx^0& \in \reals^d\\
\bx^{k+1} &= \circpar{1+\alpha}\circpar{\bx^{k} - \frac1L\nabla f(\bx^{k})} - \alpha \circpar{\bx^{k-1} - \frac1L\nabla f(\bx^{k-1})}\\
&= \circpar{1+\alpha}\circpar{\bx^{k} - \frac1L (A\bx^{k}+\bb)} - \alpha \circpar{\bx^{k-1} - \frac1L (A\bx^{k-1}+\bb)}\\
&= \circpar{1+\alpha}\circpar{I - \frac1L A} \bx^{k}
 -\alpha\circpar{I - \frac1L A} \bx^{k-1}
 -\frac1L \bb
\end{align*}
See \cite{nesterov2004introductory} for more details.

\item [Stochastic Coordinate Descent (SCD)] is a $1$-CLI optimization algorithm. This is an extension of the example shown in Section \ref{section:sdca_case_study}. SCD acts by repeatedly minimizing a uniformly randomly drawn coordinate in each iteration. That is,
\begin{align*}
&\bx^0 \in \reals^d\\
&\text{Pick } i\sim \cU([d]) \text{ and set }\bx^{k+1} =  \left(I-\frac{1}{A_{i,i}}\be_i \mathbf{a}_{i,\star}^\top \right)\bx^k - \frac{b_i}{A_{i,i}}\be_i
\end{align*}
where $\mathbf{a}_{i,\star}^\top$ denotes the $i$'th row of $A$ and $\bb\eqdef\circpar{b_1,b_2,\dots,b_d}$.  Note that the expected update rule of this method is equivalent to the well-known Jacobi's iterative method.
\end{description}

We now describe some popular optimization algorithms which do not fit this framework, mainly because the stationarity requirement fails to hold. The extension of this framework to cyclic and piecewise stationary optimization algorithms is left to future work.

\begin{description}
\item [Conjugate Gradient Descent (CGD)] Can be expressed as a non-stationary linear iterative method. 
\begin{align*}
\bx^{k+1}  &= \circpar{(1+\beta_k) I-\alpha_k A } \bx^{k} -\beta_k I \bx^{k-1} -\alpha_k \bb
\end{align*}
where $\alpha_k$ and $\beta_k$ are computed at each iteration based on $\bx^k,\bx^{k-1},A$ and $b$. Note the similarity of CGD and the heavy ball method. See \cite{polyak1987introduction,nemirovski2005efficient} for more details.

\item [Stochastic Gradient Descent (SGD) ] A straightforward extension of the deterministic FGD. Specifically,  let $(\Omega,\cF,\cP)$ be a probability space  and let $G(\bx,\omega):\reals^d\times \Omega\to\reals^d$ be 
an unbiased estimator of $\nabla f(\bx)$ for any $\bx$. That is,
\begin{align*}
\bE[ G(\bx,\omega) ] &= \nabla f(\bx) = A \bx + \bb,\quad \bx\in\reals^d
\end{align*}
Equivalently, define $\be(\bx,\omega)= G(\bx,\omega) - (A \bx + \bb) $ and assume $\bE[\be(\bx,\omega)]=0,~\bx\in\reals^d$. SGD may be defined using a suitable sequence of step sizes $(\gamma_i)_{i=1}^\infty$ as follows
\begin{align*}
			\text{Generate } \omega_{k} \text{ randomly and set } \bx^{k+1} &= \bx^k - \gamma_i G(\bx^k,\omega_{k})\\ &= \circpar{I-\gamma_i A}\bx^k  - \gamma_i \bb - \gamma_i \be(\bx,\omega)
\end{align*}
Clearly, some types of noise may not form a $p$-SCLI optimization algorithm. However, for some instances, e.g., quadratic learning problems, we have $$\be(\bx,\omega)=A_\omega\bx + \bb_\omega$$ such that 
\begin{align*}
\bE[A_\omega]&=0,\quad \bE[\bb_\omega]=0
\end{align*}
If, in addition, the step size is fixed then we get a $1$-SCLI optimization algorithm. See \cite{kushner2003stochastic,spall2005introduction,nemirovski2005efficient} for more details.

\end{description}


\subsection{Computational Complexity}
The stationarity property of general $p$-SCLIs optimization algorithms implies that the computational cost of minimizing a given quadratic function $\quadab{A,\bb}$, assuming $\Theta\circpar{1}$ cost for all arithmetic operations, is
\begin{align*}
	\# \text{ Iterations } \times
	\begin{cases} 
	\text{ Generating coefficient and inversion matrices randomly}\\
	\qquad\qquad+\\
	\text{ Executing update rule (\ref{def:pscli_update_rule})based on the previous } p \text{ points }
	\end{cases}
\end{align*}
The computational cost of the execution of update rule (\ref{def:pscli_update_rule}) scales linearly with $d$  the dimension of the problem and $p$ the lifting factor. Thus, the running time of $p$-SCLIs is mainly affected by the iterations number and the computational cost of randomly generating coefficient and inversion matrices each time. Notice that for deterministic $p$-SCLIs one can save running time by computing the coefficient and inversion matrices once, prior to the execution of the algorithm. Not surprisingly, but interesting nonetheless, there is a law of conservation which governs the total amount of computational cost invested in both factors: the more demanding is the task of randomly generating coefficient and inversion matrices, the less is the total number of iterations required for obtaining a given level of accuracy, and vice verse. Before we can make this statement more rigorous, we need to present a few more facts about $p$-SCLIs. For the time being, let us focus on the \emph{iteration~complexity}, i.e., the total number iterations, which forms our analogy for black box complexity. \\

The \emph{iteration~complexity} of a $p$-SCLI optimization algorithm $\cA$ with respect to an accuracy level $\epsilon$, an initialization points $\mathcal{X}^0$ and a quadratic function $\quadab{A,\bb}$, symbolized by
$$\IC_\cA\circpar{\epsilon,\quadab{A,\bb},\mathcal{X}^0}$$
is defined to be the minimal number of iterations $K$ such that 
\begin{align*} 
\norm{\bE [\bx^k - \bx^*] } <\epsilon,\quad \forall k\ge K
\end{align*}
where $\bx^*=-A^{-1}\bb$ is the minimizer of $\quadab{A,\bb}$, assuming $\cA$ is initialized at $\mathcal{X}^0$. We would like to point out that although iteration complexity is usually measured through 
\begin{align*} 
\bE\norm{ \bx^k - \bx^*} 
\end{align*}
here we employ a different definition. We will discuss this issue shortly. \\

In addition to showing that the iteration complexity of $p$-SCLI algorithms scales logarithmically with $1/\epsilon$, the following theorem provides a characterization for the iteration complexity in terms of the root radius of the characteristic polynomial. The full proof for this theorem is somewhat long and thus provided in Section \ref{subsection:conv_prop}.
\begin{theorem} \label{thm:ic_cli}
Let $\cA$ be a $p$-SCLI optimization algorithm over $\reals^d$ and let $\quadab{A,\bb}\in\posdefun{d}{\Sigma},~(\Sigma\subseteq \reals^{++})$ be a quadratic function. Then, there exists $\cX^0 \in \reals^{dp}$ such that
\begin{align*}
\IC_\cA\circpar{\epsilon,\quadab{A,\bb},\mathcal{X}^0}=\tilde{\Omega}\circpar{\frac{\rho}{1-\rho}\ln(1/\epsilon)}
\end{align*}
and for all $\cX^0 \in \reals^{dp}$, it holds that
\begin{align*}
\IC_\cA\circpar{\epsilon,\quadab{A,\bb},\mathcal{X}^0}=\bigtO{\frac{1}{1-\rho}\ln(1/\epsilon)}
\end{align*}
where $\rho$ denotes the root radius of the characteristic polynomial evaluated at $X=A$. 
\end{theorem}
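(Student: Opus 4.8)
The plan is to reduce the statement to the spectral analysis of a single deterministic linear operator and then convert spectral decay rates into iteration counts.

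First I would pass to expectations. Taking $\bE$ of the update rule \paref{def:pscli_update_rule} and using stationarity — at step $k$ the random matrices $C_j(A)$ are drawn independently of $\bx^{k-p+j}$ — the products factor, so the expected iterates obey the \emph{deterministic} recurrence
$$\bE\bx^{k} = \sum_{j=0}^{p-1}\bar{C}_j\,\bE\bx^{k-p+j} + N(A)\bb, \qquad \bar{C}_j\eqdef \bE C_j(A).$$
Writing $\bz^k\eqdef\bE[\bx^k]-\bx^*$ with $\bx^*=-A^{-1}\bb$, and noting that finiteness of the iteration complexity forces $\bx^*$ to be the fixed point of this recurrence (otherwise $\norm{\bz^k}\not\to0$ and both sides are infinite), the error sequence satisfies the \emph{homogeneous} recurrence $\bz^{k}=\sum_{j}\bar{C}_j\,\bz^{k-p+j}$.

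Second, I would lift this $p$-step recurrence to a first-order one. Stacking $\br_k\eqdef\circpar{\bz^{k-p+1},\dots,\bz^{k}}\in\reals^{dp}$ gives $\br_k=M\br_{k-1}$, where $M$ is the block-companion matrix of the $\bar{C}_j$. The standard companion identity yields $\det(M-\lambda I_{dp})=\pm\det\cL_\cA(\lambda,A)$, so $\spec{M}$ is exactly the root set of $\det\cL(\lambda,A)$ and $\rho(M)=\rho$. Since $\bz^k$ is a fixed coordinate block of $\br_k=M^{\,k-p+1}\br_{p-1}$, the whole problem reduces to reading off how fast $\norm{M^n\br_{p-1}}$ decays, for a worst-case and for a best-case choice of $\br_{p-1}$ (equivalently of $\cX^0$).

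Third, both estimates follow from two-sided control of the powers of $M$ together with the elementary inequalities $\tfrac{\rho}{1-\rho}\le \tfrac{1}{\ln(1/\rho)}\le \tfrac{1}{1-\rho}$, valid for $\rho\in(0,1)$ and each an instance of $\ln x\le x-1$. For the upper bound, the Jordan form gives $\norm{M^n}\le \poly(n)\,\rho^{\,n}$, hence for \emph{every} $\cX^0$ we have $\norm{\bz^k}<\epsilon$ once $k$ exceeds $\ln(1/\epsilon)/\ln(1/\rho)$ by a polylogarithmic margin (from the prefactor and from $\norm{\br_{p-1}}$); combined with $1/\ln(1/\rho)\le 1/(1-\rho)$ this gives $\bigtO{\tfrac{1}{1-\rho}\ln(1/\epsilon)}$. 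For the lower bound I would choose $\cX^0$ so that $\br_{p-1}$ has a nonzero component along an eigenvector (or top generalized eigenvector) associated with an eigenvalue of modulus $\rho$; then $\norm{M^n\br_{p-1}}\ge c\,\rho^{\,n}$ along a positive-density set of indices $n$, forcing $\norm{\bz^k}\ge\epsilon$ until $k$ is of order $\ln(1/\epsilon)/\ln(1/\rho)\ge \tfrac{\rho}{1-\rho}\ln(1/\epsilon)$, i.e. $\tilde{\Omega}\circpar{\tfrac{\rho}{1-\rho}\ln(1/\epsilon)}$.

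The delicate part is the robustness of the lower bound to the algebraic structure of $M$. If the dominant eigenvalue is complex, a real initialization must be assembled from a conjugate pair, so $\norm{\bz^k}$ grows like $\rho^k$ modulated by a bounded oscillation, and I must argue this modulation stays bounded away from $0$ along enough indices that the ``for all $k\ge K$'' quantifier in the definition of $\IC_\cA$ is genuinely violated near the claimed threshold. I must likewise ensure the extracted coordinate block $\bz^k$ does not itself vanish — it cannot for a nonzero eigenvalue, since the companion structure ties the blocks together by powers of that eigenvalue — and handle defective eigenvalues, whose polynomial prefactors only help the lower bound while contributing exactly the logarithmic slack absorbed by the $\tilde{\Omega}/\bigtO{\cdot}$ notation. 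The degenerate cases $\rho=0$ (finite termination) and $\rho\ge1$ (no convergence, infinite complexity) are treated separately and remain consistent with the stated formulas.
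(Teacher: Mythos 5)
Your proposal is correct and takes essentially the same route as the paper: pass to expectations using stationarity, lift the $p$-step error recurrence to the block-companion iteration matrix $M$ on $\reals^{pd}$, identify $\rho(M)$ with the root radius of $\det \cL_{\cA}(\lambda,A)$ (the paper's Lemma~\ref{lemma:mat_pol_spec}), and convert two-sided Jordan-form bounds on $\norm{M^k}$ into iteration counts via elementary exponential inequalities (the paper's Lemma~\ref{lem:conv_rate_jord} and Inequality~(\ref{ineq:exp_x})). The only differences are cosmetic: the paper lifts first and factors $\bE\prod_j M^{(j)}=(\bE M)^k$ by independence whereas you take expectations before lifting, and the oscillation issue you flag for complex dominant eigenvalues is resolved in the paper by a linear-independence argument giving the lower bound for \emph{all} sufficiently large $k$, not merely a positive-density set.
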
  
We remark that the constants in the asymptotic behavior above may depend on the quadratic function under consideration, and that the logarithmic terms depend on the distance of the initialization points from the minimizer, as well as the lifting factor and the spectrum of the second-order derivative. For the sake of clarity, we shall usually omit the dependency on the initialization points.\\

There are two, rather subtle, issues regarding the definition of iteration complexity which we would like to address. First, observe that in many cases a given point $\tilde{\bx}\in\reals^d$ is said to be $\epsilon$-optimal w.r.t  some real function $f:\reals^d\to\reals$ if 
\begin{align*}
	f(\tilde{\bx}) < \min_{\bx\in\reals^d} f(\bx) + \epsilon 
\end{align*}
However, here we employ a different measure for optimality. Fortunately, in our case either can be used without essentially affecting the iteration complexity. That is, although in general the gap between these two definitions can be made arbitrarily large, for $L$-smooth $\mu$-strongly convex functions we have
\begin{align*}
	\frac{\mu}{2} \normsq{\bx-\bx^*} \le  f(\bx)-f(\bx^*) \le \frac{L}{2} \normsq{\bx-\bx^*} 
\end{align*} 
Combining these two inequalities with the fact that the iteration complexity of $p$-SCLIs depends logarithmically on $1/\epsilon$ implies that in this very setting these two distances are interchangeable, up to logarithmic factors.\\

Secondly, here we measure the sub-optimality of the $k$'th iteration by $\norm{\bE [\bx^k - \bx^*] }$,
whereas in many other stochastic settings it is common to derive upper and lower bounds on $\bE\rectpar{\norm{\bx^k - \bx^* }}$. That being the case, by
\begin{align*}
	\bE\rectpar{\normsq{\bx^k - \bx^* }} 	&= \bE\rectpar{\normsq{\bx^k - \bE \bx^k}} +\normsq{ \bE \rectpar{\bx^k  -\bx^* }} 
\end{align*}
we see that if the variance of the $k$'th point is of the same order of magnitude as the norm of the expected distance from the optimal point, then both measures are equivalent. Consequently, our upper bounds imply upper bounds on $\bE\rectpar{\normsq{\bx^k - \bx^* }}$ for deterministic algorithms (where the variance term is zero), and our lower bounds imply lower bounds on $\bE\rectpar{\normsq{\bx^k - \bx^* }}$, for both deterministic and stochastic algorithms (since the variance is always non-negative). We defer a more adequate treatment for this matter to future work.

\section{Deriving Bounds for \texorpdfstring{$p$}{p}-SCLI Algorithms}
The goal of the following section is to show how the framework of $p$-SCLI optimization algorithms can be used to derive lower and upper bounds. Our presentation follows from the simplest setting to the most general one. case to th First, we present a useful characterization of consistency (see \defref{definition:consistency}) of $p$-SCLIs using the characteristic polynomial. Next, we demonstrate the importance of consistency through a simplified one dimensional case. This line of argument is then generalized to finite dimensional spaces and is used to explain the role of the inversion matrix. Finally, we conclude this section by providing a schematic description of this technique for the most general case which is used both in Section (\ref{chapter:lower_bounds}) to establish lower bounds on the convergence rate of $p$-SCLIs with diagonal inversion matrices, and in Section (\ref{section:ub}) to derive efficient $p$-SCLIs.\\

\subsection{Consistency} \label{subsection:cons}
Closely inspecting various specifications for $p$-SCLI optimization algorithms (see Section (\ref{section_spec_algo})) reveals that the coefficient matrices always sum up to $I+\bE N(X)X$, where $N(X)$ denotes the inversion matrix. It turns out that this is not a mere coincidence, but an extremely useful characterization for consistency of $p$-SCLIs. To see why this condition must hold, suppose $\cA$ is a deterministic  $p$-SCLI algorithm over $\reals^d$ whose coefficient matrices and inversion matrix are $C_0(X),\dots,C_{p-1}(X)$ and  $N(X)$, respectively, and suppose that $\cA$ is consistent w.r.t some $A\in\posdef{d}{\Sigma}$. Recall that every $p+1$ consecutive points generated by $\cA$ are related by (\ref{def:pscli_update_rule}) as follows 
\begin{align*} 
\bx^k = \sum_{j=0}^{p-1} C_{j}(A) \bx^{k-p+j}+ N(A)\bb,\quad k=p,p+1,\dots 
\end{align*}
Taking limit of both sides of the equation above and noting that by consistency
\begin{align*}
	\bx^k &\to -A^{-1}\bb 
\end{align*}
for any $\bb\in\reals^d$, yields
\begin{align*} 
-A^{-1}\bb  = -\sum_{j=0}^{p-1} C_{j}(A) A^{-1}\bb  + N(A)\bb
\end{align*}
Thus,
\begin{align*} 
-A^{-1}  = -\sum_{j=0}^{p-1} C_{j}(A) A^{-1}  + N(A)
\end{align*}
Multiplying by $A$ and rearranging, we obtain
\begin{align} \label{equation:sum_for_consistency} 
\sum_{j=0}^{p-1} C_{j}(A) = I_d  + N(A)A
\end{align}
On the other hand, if instead of assuming consistency we assume that $\cA$ generates a convergent sequence of points and that \eqref{equation:sum_for_consistency} holds, then the arguments used above show that the limit point must be $-A^{-1}\bb$. In terms of the characteristic polynomial of $p$-SCLIs, this formalized as follows.
\begin{theorem} [Consistency - System Polynomials]\label{thm:conv_correct}
Suppose $\cA\eqdef(\syspola,N(X))$ is a $p$-SCLI optimization algorithm. Then, $\cA$ is consistent with respect to $A\in\posdef{d}{\Sigma}$ if and only if the following two conditions hold:
\begin{align}
1.&~\syspol{\cA}{1,A} = -\bE N(A)A \label{consis_1_syspol}\\
2.&~\rho_\lambda(\syspol{}{\lambda,A}) < 1 \label{consis_2_syspol}
\end{align}
\end{theorem}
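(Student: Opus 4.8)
The plan is to reduce consistency to the asymptotic stability of a single deterministic linear recurrence and then read the two conditions off of it. First I would take expectations on both sides of the update rule \paref{def:pscli_update_rule}. By the stationarity assumption the coefficient matrices drawn at step $k$ are independent of the earlier iterate $\bx^{k-p+j}$, so $\bE\rectpar{C_j(A)\bx^{k-p+j}} = \bE C_j(A)\,\bE\bx^{k-p+j}$; writing $\bz^k \eqdef \bE\bx^k$ and $M_j \eqdef \bE C_j(A)$ this yields the purely deterministic recurrence
\[
\bz^k = \sum_{j=0}^{p-1} M_j\,\bz^{k-p+j} + \bE N(A)\,\bb .
\]
Since (in keeping with the iteration-complexity measure $\norm{\bE\rectpar{\bx^k-\bx^*}}$) consistency is tracked through the expected iterate, it suffices to characterize when $\bz^k \to -A^{-1}\bb$ for every $\bb\in\reals^d$ and every initialization.

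Next I would lift this $p$-th order recurrence to first order on $\reals^{dp}$ by stacking $\by^k \eqdef \circpar{\bz^{k-p+1},\dots,\bz^k}$, so that $\by^{k} = C\by^{k-1}+\mathbf{c}$ with $C$ the block companion matrix built from $M_0,\dots,M_{p-1}$ and $\mathbf{c}$ the lifted constant term. The structural fact I would invoke is the block-companion identity $\det\circpar{\lambda I_{dp}-C} = \det\syspol{}{\lambda,A}$, so that $\rho(C) = \rho_\lambda\circpar{\syspol{}{\lambda,A}}$. This identifies condition \paref{consis_2_syspol} with $\rho(C)<1$, i.e. asymptotic stability; when it holds, $I-C$ is invertible, the recurrence has the unique fixed point whose $\bz$-block solves $\circpar{I-\sum_j M_j}\bz^* = \bE N(A)\,\bb$, and $\bz^k\to\bz^*$ from every starting state.

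For the ``if'' direction I would assume both conditions. Condition \paref{consis_2_syspol} gives $\rho(C)<1$, hence convergence of $\bz^k$ to $\bz^*$ regardless of initialization. Noting that $\syspol{\cA}{1,A} = I_d-\sum_j M_j$, condition \paref{consis_1_syspol} reads $I_d-\sum_j M_j = -\bE N(A)A$, which is exactly what forces $\bz^*=-A^{-1}\bb$: substituting $\bz^*=-A^{-1}\bb$ into the fixed-point equation gives $-\bE N(A)A\circpar{-A^{-1}\bb}=\bE N(A)\bb$, as required. This reproduces, now in expectation, the computation already carried out for the deterministic case in \paref{equation:sum_for_consistency}. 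For the ``only if'' direction I would argue contrapositively: if $\rho(C)\ge 1$ there is a homogeneous mode that does not decay to zero --- diverging when $|\lambda|>1$, oscillating when $|\lambda|=1$ with $\lambda\neq1$, or leaving an initialization-dependent residual when $\lambda=1$ --- so some initialization makes $\bz^k$ fail to converge to the single limit $-A^{-1}\bb$, contradicting consistency; thus \paref{consis_2_syspol} holds. Stability then forces the limit to be the unique fixed point, and $\bz^*=-A^{-1}\bb$ for all $\bb$ yields $\circpar{I-\sum_j M_j}\circpar{-A^{-1}}=\bE N(A)$, i.e. \paref{consis_1_syspol}.

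I expect the main obstacle to be the careful treatment of the ``regardless of initialization'' clause together with the boundary spectrum $|\lambda|=1$ in the only-if direction, and the clean justification of $\rho(C)=\rho_\lambda\circpar{\syspol{}{\lambda,A}}$ in the matrix-valued setting; here the simultaneous-triangularizability hypothesis on the $\bE C_j(A)$ is precisely what lets me triangularize all the $M_j$ at once, decouple the recurrence into scalar ones, and thereby both exhibit the non-decaying modes explicitly and match the companion eigenvalues to the roots of $\det\syspol{}{\lambda,A}$.
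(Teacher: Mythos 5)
Your proof is correct and follows the same overall skeleton as the paper's: pass to expectations using stationarity (the paper does this in deriving \eqref{EqLine:e25}), lift the $p$-th order recurrence to the block companion iteration matrix of (\ref{def:pcli}), identify the spectral radius of that matrix with $\rho_\lambda\circpar{\syspol{}{\lambda,A}}$ (the paper's \lemref{lemma:mat_pol_spec}), and characterize convergence from every initialization by $\rho<1$ (the paper's \thmref{thm:conv_cli}, built on the Jordan-form \lemref{lem:conv_rate_jord} and the Neumann-series \lemref{lem:conv_equi}). Where you genuinely diverge is in how condition \paref{consis_1_syspol} is extracted. The paper starts from the closed-form limit $\circpar{I-\bE M}^{-1}U\,\bE\rectpar{N(A)}\bb$, deduces $U^\top\circpar{I-M}^{-1}UN=-A^{-1}$, argues that $N$ must be invertible, and then evaluates the Schur complement of $I-M_{11}$ in $I-M$ together with an explicit formula for $\circpar{I-M_{11}}^{-1}$ to reach $\sum_j C_j = I + NA$. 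You instead observe that the companion structure forces all $d$-blocks of the unique fixed point to coincide, so the fixed-point equation collapses immediately to $\circpar{I-\sum_j M_j}\bz^* = \bE N(A)\bb$ and condition \paref{consis_1_syspol} drops out with no Schur complement and no detour through the invertibility of $\bE N(A)$; this is a cleaner derivation of the same identity. You are also more careful than the paper at the spectral boundary: the paper's only-if step concludes $\rho(\bE M)<1$ merely from convergence of $\bE[M]^k\bz^0$ for every $\bz^0$, which as stated would be defeated by a semisimple eigenvalue at $\lambda=1$ (e.g.\ $\bE M=I$, where all powers converge); your explicit appeal to the ``same limit regardless of initialization'' clause to kill the $\lambda=1$ mode closes that gloss. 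One small misattribution: the identity $\det\circpar{\lambda I_{dp}-C}=\det\syspol{}{\lambda,A}$ does not require simultaneous triangularizability --- \lemref{lemma:mat_pol_spec} proves it by block row operations for arbitrary coefficient matrices; triangularizing and decoupling into scalar recurrences, as you propose, is a valid alternative (it reproduces the factorization (\ref{decomp_sp})), but the hypothesis is not what makes that identity true.
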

The proof for the preceding theorem is provided in Section \ref{section:conv_correct}.
This result will be used extensively throughout the reminder of this work.

\subsection{Simplified One-Dimensional Case} \label{section:odc}
To illustrate the significance of consistency in the framework of $p$-SCLIs, consider the following simplified case. Suppose $\cA$ is a deterministic 2-SCLI optimization algorithm over $\posdefun{1}{[\mu,L]}$, such that its inversion matrix $N(x)$ is some constant scalar $\nu\in\reals$ and its coefficient matrices $c_0(x),c_1(x)$ are free to take any form. The corresponding characteristic polynomial is
\begin{align*}
	\mathcal{L}(\lambda,x)  &= \lambda^2 - c_1(x)\lambda -c_0(x)
\end{align*}
Now, let $f_{a,b}(x)\in\posdefun{1}{[\mu,L]}$ be a quadratic function. By \thmref{thm:ic_cli}, we know that $\cA$ converges to the minimizer of $f_{a,b}(x)$ with an asymptotic geometric rate of $\rho_\lambda(\mathcal{L}(\lambda,a))$, the maximal modulus root. Thus, ideally we would like to set $c_j(x)=0,~j=0,1$. However, this might violate the consistency condition (\ref{consis_1_syspol}), according to which, one must maintain $$\mathcal{L}(1,a)=-\nu a$$ That being the case, how little can $\rho_\lambda\circpar{\mathcal{L}(\lambda,a)}$ be over all possible choices for $c_j(a)$ which satisfy $\mathcal{L}(1,a)=-\nu a$? Formally, we seek to solve the following minimization problem. 
\begin{align*} 
\rho_* = \min\left\{ \rho_\lambda(\syspol{}{\lambda,a})~ \left|~ \syspol{}{\lambda,a} \text{ is a real monic quadratic polynomial in $\lambda$ and } \syspol{}{1} = -\nu a \right.\right\}
\end{align*}
By consistency we also have that $\rho_*$ must be strictly less than one. This readily implies that $-\nu a>0$. In which case, \lemref{lem:comp_poly} below gives
\begin{align}
	\rho_* & \ge \rho\circpar{\circpar{\lambda-1-\sqrt{-\nu a}}^2} = \absval{\!\sqrt{-\nu a}-1\!}
\end{align}
The key ingredient here is that $\nu$ cannot be chosen so as to be optimal for all $\posdefun{1}{[\mu,L]}$, at one and the same time. Indeed, the preceding inequality holds in particular for $a=\mu$ and $a=L$, by which we conclude that 
\begin{align} \label{ineq:one_dim_case}
	\rho_* &\ge \max\left\{\absval{\!\sqrt{-\nu \mu}-1\!}, \absval{\!\sqrt{-\nu L}-1\!} \right\} \ge \frac{\sqrt{\kappa}-1}{\sqrt{\kappa}+1}
\end{align}
where $\kappa\eqdef L/\mu$. Plugging in \ineqref{ineq:one_dim_case} into \thmref{thm:ic_cli} implies that there exists $f_{a,b}(x)\in\posdefun{1}{[\mu,L]}$ such that the iteration complexity of $\cA$ for minimizing it is 
\begin{align*}
	\tilde{\Omega}\circpar{\frac{\sqrt{\kappa} -1}{2}\ln(1/\epsilon)}  
\end{align*}
To conclude, by applying this rather natural line of argument we have established a lower bound on the convergence rate of any $2$-SCLI optimization algorithms for smooth and strongly convex function over $\reals$, e.g., AGD and HB. 

\subsection{General Case and the Role of the Inversion Matrix} \label{subsection:general_case}
We now generalize the analysis shown in the previous simplified case to any $p$-SCLI optimization algorithm over any finite dimensional space. This generalization relies on a useful decomposability property of the characteristic polynomial, according to which deriving a lower bound on the convergence rate of $p$-SCLIs over $\reals^d$ is essentially equivalent for deriving $d$ lower bounds on
the maximal modulus of the roots of $d$ polynomials over $\reals$. \\

Let $\cA\eqdef(\cL(\lambda,X),N(X))$ be a consistent deterministic $p$-SCLI optimization algorithm and let $\quadab{A,\bb}\in\posdefun{d}{\Sigma}$ be a quadratic function. By consistency (see \thmref{thm:conv_correct}) we have 
\begin{align*}
	\cL(1,A)&=-NA	 
\end{align*}
(for brevity we omit the functional dependency on $X$). Since coefficient matrices are assumed to be simultaneously triangularizable, there exists an invertible matrix $Q\in\reals^{d\times d}$ such that $$T_j\eqdef Q^{-1} C_j Q,\quad j=0,1,\dots,p-1$$ are upper triangular matrices. Thus, by the definition of the characteristic polynomial (Definition \ref{def:l_lambda}) we have,
\begin{align} \label{decomp_sp}
		\det \syspola  &= \det\circpar{Q^{-1} \syspola Q } = \det\circpar{I_d\lambda^p - \sum_{j=0}^{p-1} T_j\lambda^{j}} = \prod_{j=1}^d \ell_j(\lambda)
\end{align}
where
\begin{align} 
 \label{eq:system_polys}
	\ell_j(\lambda) &= \lambda^p - \sum_{k=0}^{p-1} \sigma_j^k\lambda^{k} 
\end{align}
and where $\sigma_1^j,\dots, \sigma_d^{j},~j=0,\dots,p-1$ denote the elements on the diagonal of $T_j$, or equivalently the eigenvalues of $C_j$ ordered according to $Q$. Hence, the root radius of the characteristic polynomial of $\cA$ is
\begin{align} \label{eq:max_over_ells}
	\rho_\lambda(\syspola) &=  \max {\myset{\absval{\lambda}}{\ell_i(\lambda)=0 \text{ for some } i\in[d]}} 
\end{align}
On the other hand, by consistency condition (\ref{consis_1_syspol}) we get that for all $i\in[d]$ 
\begin{align} \label{constraint_sp}
	\ell_i(1)=\sigma_i\circpar{\syspol{}{1}}=\sigma_i\circpar{-NA} 
\end{align}
It remains to derive a lower bound on the maximum modulus of the roots of $\ell_i(\lambda)$, subject to the constraint (\ref{constraint_sp}). To this end, we employ the following lemma whose proof can be found in Section \ref{proof:lem:comp_poly}. 
\begin{lemma} \label{lem:comp_poly}
Suppose $q(z)$ is a real monic polynomial of degree $p$. If $q(1)<0$, then $$\rho(q(z))>1$$ Otherwise, if $q(1)\ge0$, then 
$$\rho(q(z))\ge\absval{\!\sqrt[q]{q(1)}-1\!}$$
In which case, equality holds if and only if $$q(z) =	\circpar{z-(1-\sqrt[p]{q(1)})}^p$$
\end{lemma}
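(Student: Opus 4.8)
The plan is to work directly with the factorization of $q$ over $\bC$. Write $q(z) = \prod_{i=1}^p (z - z_i)$ for its (complex) roots, so that $\rho(q(z)) = \max_i \absval{z_i}$ and, crucially, $q(1) = \prod_{i=1}^p (1 - z_i)$. Since $q$ is real, its non-real roots occur in conjugate pairs, and each such pair contributes $(1-z)(1-\bar z) = \absval{1-z}^2 \ge 0$ to $q(1)$. This reformulation reduces both claims to elementary estimates on the scalar factors $\absval{1 - z_i}$ in terms of $\absval{z_i}$, namely the two triangle inequalities $\absval{1 - z_i}\le 1 + \absval{z_i}$ and $\absval{1 - z_i}\ge 1 - \absval{z_i}$.

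For the first claim, suppose $q(1) < 0$. In particular $q(1)\neq 0$, so no $z_i$ equals $1$, and grouping the conjugate pairs as above each contributes a strictly positive factor $\absval{1-z}^2 > 0$. Hence the sign of $q(1)$ is exactly that of $\prod_{z_i\in\reals}(1-z_i)$, a product of real numbers. Thus $q(1)<0$ forces an odd number (in particular, at least one) of real roots $z_i$ with $1 - z_i < 0$, i.e. $z_i > 1$, and any such root gives $\rho(q(z)) \ge z_i > 1$, as required.

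For the second claim, set $\rho := \rho(q(z))$ and $s := \sqrt[p]{q(1)} \ge 0$, and use $q(1) = \absval{q(1)} = \prod_i \absval{1 - z_i}$ (valid as $q(1)\ge 0$). From $\absval{1 - z_i} \le 1 + \absval{z_i} \le 1 + \rho$ I would take the product to get $q(1)\le(1+\rho)^p$, whence $s \le 1 + \rho$, i.e. $\rho \ge s - 1$. For the reverse estimate I would split into cases: if $\rho \ge 1$ then trivially $\rho \ge 1 \ge 1 - s$ (as $s \ge 0$); while if $\rho < 1$, then every $\absval{z_i}\le\rho<1$, so $\absval{1 - z_i} \ge 1 - \absval{z_i} \ge 1 - \rho \ge 0$, and the product gives $q(1) \ge (1-\rho)^p$, hence $s \ge 1 - \rho$, i.e. $\rho \ge 1 - s$. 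Combining yields $\rho \ge \max\{s-1,\,1-s\} = \absval{s - 1}$, the desired inequality.

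Finally, for the equality characterization I would trace which roots make the governing triangle inequality tight, working under $q(1) > 0$ (so $s > 0$ and $z=1$ is not a root). If $s \ge 1$ the binding bound is $\rho = s - 1$, which forces $\absval{1 - z_i} = 1 + \absval{z_i} = 1 + \rho$ for every $i$; a short computation shows the first equality holds only for real $z_i \le 0$ and the second only for $\absval{z_i} = \rho$, so each root equals $-\rho = 1 - s$. If $s < 1$ the binding bound is $\rho = 1 - s < 1$, forcing $\absval{1 - z_i} = 1 - \absval{z_i} = 1 - \rho$ for every $i$; here the conditions force real $z_i \in [0,1]$ with $\absval{z_i}=\rho$, so each root equals $\rho = 1 - s$. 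In either case all roots coincide at $1 - s$, giving $q(z) = \circpar{z - (1 - \sqrt[p]{q(1)})}^p$, and conversely this polynomial plainly attains equality. I expect this last step to be the main obstacle: one must pin down the simultaneous equality conditions of both triangle inequalities (a real root of the correct sign \emph{and} of modulus exactly $\rho$) for all $p$ factors at once. One should also treat the degenerate case $q(1)=0$ separately, where $z=1$ is a root and the equality statement as written need not hold (e.g. $z^2-1$ attains the bound $\rho=1$ without being $(z-1)^2$); this case does not arise in the intended application, where $q(1)>0$.
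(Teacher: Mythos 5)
Your proof is correct and rests on the same core mechanism as the paper's: factor $q$ over $\bC$, evaluate the product at $z=1$, and squeeze $\absval{q(1)}=\prod_i\absval{1-z_i}$ between the two triangle inequalities $\absval{1-z_i}\le 1+\absval{z_i}$ and $\absval{1-z_i}\ge 1-\absval{z_i}$; your equality analysis (every root real, of the correct sign, and of modulus exactly $\rho$) is precisely the content of the paper's auxiliary lemma on the polynomial $\circpar{z-(1-\sqrt[p]{r})}^p$, merely run as a direct derivation of $\rho\ge\absval{\sqrt[p]{q(1)}-1}$ rather than the paper's contrapositive (assume $\rho\le\absval{\sqrt[p]{r}-1}$ and force equality along the whole chain). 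Two genuine differences are worth noting. First, for $q(1)<0$ the paper uses realness analytically---$q(z)\to+\infty$ as $z\to\infty$ along $\reals$, so the intermediate value theorem yields a real root strictly greater than $1$---whereas you use it algebraically, pairing conjugate roots so that each pair contributes $\absval{1-z}^2>0$ and the sign of $q(1)$ is carried entirely by the real roots, forcing some real root above $1$; both arguments are elementary and correct, and both genuinely need real coefficients, as the paper's own remark about $u(z)=\circpar{z-(1-\tfrac12 e^{i\pi/3})}^3$ shows. Second, and more substantively, your closing caveat about $q(1)=0$ is a genuine correction rather than pedantry: for $q(z)=z^2-1$ one has $q(1)=0$ and $\rho=1=\absval{\sqrt{0}-1}$, yet $q(z)\neq(z-1)^2$, so the ``only if'' half of the equality characterization fails there. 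The paper's proof silently breaks at exactly this point: when $r=0$ the factors $1-\absval{\zeta_i}$ may vanish (the root $\zeta=1$ zeroes out both products), so equality of the products no longer forces factor-wise equality, which is the step the paper relies on. Your restriction of the characterization to $q(1)>0$ is the right fix and costs nothing in the intended application, where consistency guarantees $q(1)=\sigma_i(-N(A)A)>0$.
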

We remark that the second part of \lemref{lem:comp_poly} implies that subject to constraint (\ref{constraint_sp}), the lower bound stated above is unimprovable. This property is used in Section \ref{section:ub} where we aim to obtain optimal $p$-SCLIs by designing $\ell_j(\lambda)$ accordingly. Clearly, in the presence of additional constraints, one might be able to improve on this lower bound (see Section \ref{section:is_this_tight}).\\

Since $\cA$ is assumed to be consistent, \lemref{lem:comp_poly} implies that $\spec{-N(A)A}\subseteq\reals^+$, as well as the following lower bound on the root radius of the characteristic polynomial,
\begin{align} \label{nice_one}
	\rho_\lambda(\syspola)&\ge \max_{i\in[d]} \absval{\sqrt[p]{\sigma_i(-N(A)A)}-1} 
\end{align}
Noticing that the reasoning above can be readily applied to stochastic $p$-SCLI optimization algorithms, we arrive at the following corollary which combines \thmref{thm:ic_cli} and Inequality (\ref{nice_one}).
\begin{corollary} \label{cor:inv_std_pol}
Let $\cA$ be a consistent $p$-SCLI optimization algorithm with respect to some $A\in\posdef{d}{\Sigma}$, let $N(X)$ denote the corresponding inversion matrix and let  
\begin{align*}
	\rho^*&= \max_{i\in[d]} \absval{\sqrt[p]{\sigma_i(-\bE N(A)A)}-1} 
\end{align*}
then the iteration complexity of $\cA$ for any $\quadab{A,\bb}\in\posdefun{d}{\Sigma}$ is lower bounded by 
\begin{align} \label{cor:bound_by_inv}
	\tilde{\Omega}\circpar{\frac{\rho^*}{1-\rho^*}\ln(1/\epsilon)} 
\end{align}

\end{corollary}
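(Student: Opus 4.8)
The plan is to assemble the corollary directly from two ingredients already in hand: Theorem~\ref{thm:ic_cli}, which expresses iteration complexity through the root radius $\rho := \rho_\lambda(\syspol{}{\lambda,A})$ of the characteristic polynomial evaluated at $X=A$, and Inequality~(\ref{nice_one}), which lower-bounds that root radius by $\rho^*$. Everything reduces to three links in a chain: (i) $\rho \ge \rho^*$; (ii) the map $t \mapsto t/(1-t)$ is monotone increasing on $[0,1)$; and (iii) Theorem~\ref{thm:ic_cli} converts a lower bound on $\rho/(1-\rho)$ into a lower bound on iteration complexity.

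First I would establish (i), taking care to cover the stochastic case the corollary allows. Since $\cA$ is consistent, the matrices $\bE C_j(A)$ are finite and simultaneously triangularizable by Definition~\ref{definition:pscli}, so the decomposition (\ref{decomp_sp}) applies verbatim with the eigenvalues of $\bE C_j(A)$ in the role of the $\sigma_i^j$; this gives the factorization $\det\syspol{}{\lambda,A} = \prod_{i=1}^d \ell_i(\lambda)$ into the scalar polynomials (\ref{eq:system_polys}), and the consistency identity (\ref{consis_1_syspol}) yields $\ell_i(1) = \sigma_i(-\bE N(A)A)$, exactly as in (\ref{constraint_sp}). Consistency also forces $\rho < 1$ (Theorem~\ref{thm:conv_correct}), so applying Lemma~\ref{lem:comp_poly} to each $\ell_i$ rules out $\ell_i(1) < 0$ and produces $\rho(\ell_i) \ge \absval{\sqrt[p]{\sigma_i(-\bE N(A)A)} - 1}$; maximizing over $i$ through (\ref{eq:max_over_ells}) gives precisely $\rho \ge \rho^*$. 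The only novelty relative to the deterministic derivation of Section~\ref{subsection:general_case} is that every matrix is replaced by its expectation, which is legitimate because iteration complexity is measured through $\norm{\bE[\bx^k - \bx^*]}$ and the stationarity assumption makes the expected iterate evolve under the expected coefficient matrices, just as in the SDCA computation (\ref{kpoint}).

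Given (i), the remaining links are short. Because $\rho^* \le \rho < 1$ (in particular $\rho^* < 1$, so the target expression is well defined) and $t/(1-t)$ is increasing on $[0,1)$, step (ii) gives $\rho/(1-\rho) \ge \rho^*/(1-\rho^*)$. Finally, Theorem~\ref{thm:ic_cli} supplies an initialization $\cX^0 \in \reals^{dp}$ with $\IC_\cA\circpar{\epsilon,\quadab{A,\bb},\cX^0} = \tilde{\Omega}\circpar{\frac{\rho}{1-\rho}\ln(1/\epsilon)}$; substituting the inequality of step (ii) into this $\tilde{\Omega}$ bound yields the claimed $\tilde{\Omega}\circpar{\frac{\rho^*}{1-\rho^*}\ln(1/\epsilon)}$, and since $\rho$ depends on the function only through $A$, the bound holds for every $\bb \in \reals^d$.

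I do not expect a genuine obstacle here, as the statement is a corollary glued together from results already proved; the one point requiring care is the stochastic extension in step (i) — checking that the factorization and the consistency identity survive the passage from $C_j(A), N(A)$ to $\bE C_j(A), \bE N(A)$ — which is exactly what the simultaneous-triangularizability hypothesis of Definition~\ref{definition:pscli} and the expectation-based definition of iteration complexity are there to guarantee.
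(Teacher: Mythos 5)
Your proposal is correct and follows essentially the same route as the paper, which obtains the corollary by exactly this combination: the triangularization-based factorization (\ref{decomp_sp}) together with the consistency constraint (\ref{constraint_sp}) fed into \lemref{lem:comp_poly} to yield Inequality (\ref{nice_one}), which is then substituted into \thmref{thm:ic_cli} using monotonicity of $t/(1-t)$. Your careful handling of the stochastic case is the same observation the paper makes in passing, and it is already absorbed into the definitions, since the characteristic polynomial (\defref{def:l_lambda}) is built from $\bE C_j(X)$ and iteration complexity is measured via $\norm{\bE\rectpar{\bx^k-\bx^*}}$.
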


Using \corref{cor:inv_std_pol}, we are now able to provide a concise 'plug-and-play' scheme for deriving lower bounds on the iteration complexity of $p$-SCLI optimization algorithms. To motivate this scheme, note that the effectiveness of the lower bound stated in \corref{cor:inv_std_pol} is directly related to the magnitude of the eigenvalues of $-N(X)X$. To exemplify this, consider the inversion matrix of Newton method (see Section \ref{section_spec_algo}) 
\begin{align*}
N(X)=-X^{-1}
\end{align*}
Since $$\spec{-N(X)X}=\{1\}$$  the lower bound stated above is meaningless for this case. Nevertheless, the best computational cost for computing the inverse of $d\times d$ regular matrices known today is super-quadratic in $d$. As a result, this method might become impractical in large scale scenarios where the dimension of the problem space is large enough. A possible solution is to employ inversion matrices whose dependence on $X$ is simpler. On the other hand, if $N(X)$ approximates $-X^{-1}$ very badly, then the root radius of the characteristic polynomial might get too large. For instance, if $N(X)=0$ then $$\spec{-N(X)X}=\{0\}$$ contradicting the consistency assumption, regardless of the choice of the coefficient matrices. In the light of this, many optimization algorithms can be seen as strategies for balancing the computational cost of obtaining a good approximation for the inverse of $X$ and executing large number of iterations. Put differently, various structural restrictions on the inversion matrix yield different $\spec{-N(X)X}$, which in turn lead to a lower bound on the root radius of the corresponding characteristic polynomial. This gives rise to the following scheme: \\

\begin{table}[H] 
    \centering
    \begin{tabular*}{0.95\textwidth}{ll} 
        \toprule
        \textbf{Scheme 1 }&  Lower bounds \\
        \midrule
				\textbf{Parameters:} &$\bullet$ A family of quadratic functions $\posdefun{d}{\Sigma}$\\&$\bullet$ An inversion matrix $N(X)$ \\& $\bullet$ A lifting factor $p\in\bN$,\\
								
				\textbf{Choose }& $\cS'\subseteq \posdef{d}{\Sigma}$\\
				\textbf{Verify}& $\forall A\in\cS',~\spec{-\bE N(A)A}\subseteq \circpar{0,-2^p}$ for consistency\\ 
				\textbf{Bound} &$\displaystyle\max_{A\in\cS',i\in[d]} \absval{\sqrt[p]{\sigma_i(-\bE N(A)A)}-1}$ from below by some $\rho_*\in[0,1)$\\
				\midrule
        \textbf{Lower bound: }&   $\tilde{\Omega}\circpar{\frac{\rho_*}{1-\rho_*}\ln(1/\epsilon)}$			\\
        \bottomrule
    \end{tabular*}
\end{table}
This scheme is implicitly used in the previous Section (\ref{section:odc}), where we established a lower bound on the convergence rate of $2$-SCLI optimization algorithms over $\reals$ with constant inversion matrix and the following parameters
\begin{align*}
	\Sigma = [\mu,L],\quad \cS'=\{ \mu,L\}
\end{align*} 
In Section \ref{chapter:lower_bounds} we will make this scheme concrete for scalar and diagonal inversion matrices. 

\subsection{Bounds Schemes} \label{subsection:gen_bounds}
In spite of the fact that Scheme 1 is expressive enough for producing meaningful lower bounds under various structures of the inversion matrix, it does not allow one to incorporate other lower bounds on the root radius of characteristic polynomials whose coefficient matrices admit a particular form, e.g., linear coefficient matrices (see \ref{definition:first_linear_coefficient_matrices} below). Abstracting away from Scheme 1, we now formalize one of the main pillar of this work, i.e., the relation between the amount of computational cost one is willing to invest in executing each iteration and the total number of iterations needed for obtaining a given level of accuracy. 
We use this relation to form two schemes for establishing lower and upper bounds for $p$-SCLIs.\\

Given a compatible set of parameters: a lifting factor $p$, an inversion matrix $N(X)$, set of quadratic functions $\posdefun{d}{\Sigma}$ and a set of coefficients matrices $\cC$, we denote by $\mathfrak{A}(p,N(X),\posdefun{d}{\Sigma}, \cC)$ the set of consistent $p$-SCLI optimization algorithms for $\posdefun{d}{\Sigma}$ whose inversion matrix are $N(X)$ and whose coefficient matrices are taken from $\cC$. Furthermore, we denote by $\mathfrak{L}(p,N(X),\posdefun{d}{\Sigma}, \cC)$ the following set of polynomial matrices  
\begin{align*}
\left\{\syspola\eqdef I_d\lambda^p - \sum_{j=0}^{p-1} \bE C_j(X) 	\lambda^{j}\middle|~C_j(X)\in\cC,~ \syspol{}{1,A}=-N(A)A,~\forall A\in\posdef{d}{\Sigma} \right\}
\end{align*}
Since both sets are determined by the same set of parameters, the specifications of which will be occasionally omitted for brevity. The natural one-to-one correspondence between these two set, as manifested by \thmref{thm:ic_cli} and \corref{thm:conv_correct}, yields
\begin{equation} \label{eq:pillar}
	\boxed{\min_{\cA\in\mathfrak{A}} ~\max_{\quadab{A,\bb}\in\posdefun{d}{\Sigma}} \rho_\lambda(\syspol{\cA}{\lambda,A}) = \min_{\cL(\lambda,X)\in\mathfrak{L}} ~\max_{A\in\posdef{d}{\Sigma}} \rho_\lambda(\cL(\lambda,A))}
\end{equation}
The importance of \eqref{eq:pillar} stems from its ability to incorporate any bound on the maximal modulus root of polynomial matrices into a general scheme for bounding the iteration complexity of $p$-SCLIs. This is summarized by the following scheme.
\begin{table}[H] 
    \centering
    \begin{tabular*}{\textwidth}{lll} 
        \toprule
        \textbf{Scheme 2}&  Lower bounds \\
        \midrule
				\textbf{Given} &	a set of $p$-SCLI optimization algorithms $\mathfrak{A}(p,N(X),\posdefun{d}{\Sigma}, \cC)$\\
				\textbf{Find} &$\rho_*\in[0,1)$ such that \\	
				&$\qquad\displaystyle \min_{\syspol{}{\lambda,X} \in\mathfrak{L}} ~\max_{A\in \posdef{d}{\Sigma} } \rho_\lambda \circpar{{\syspol{}{\lambda,A}}}\ge\rho_*$\\

											\midrule
        \textbf{Lower bound: }&   $\tilde{\Omega}\circpar{\frac{\rho_*}{1-\rho_*}\ln(1/\epsilon)}$			\\
        \bottomrule
    \end{tabular*}
\end{table}
Thus, Scheme 1 is in effect an instantiation of the scheme shown above using \lemref{lem:comp_poly}. This correspondence of $p$-SCLI optimization algorithms and polynomial matrices can be also used contrariwise to derive efficient algorithm optimization. Indeed, in Section \ref{section_spec_algo} we show how FGD, HB and AGD can be formed as optimal instantiations of the following dual scheme.
\begin{table}[H] 
    \centering
    \begin{tabular*}{\textwidth}{lll} 
        \toprule
        \textbf{Scheme 3}&  Optimal $p$-SCLI Optimization Algorithms \\
        \midrule
				\textbf{Given} &	a set of polynomial matrices $\mathfrak{L}(p,N(X),\posdefun{d}{\Sigma}, \cC)$\\
				\textbf{Compute} & $\rho^*=\displaystyle \min_{\syspol{}{\lambda,X} \in\mathfrak{L}} ~\max_{A\in \posdef{d}{\Sigma} } \rho_\lambda \circpar{\syspol{}{\lambda,A}}$ \\&and denote its minimizer by $\cL^*\circpar{\lambda,A}$\\

											\midrule
        \textbf{Upper bound: }&  The corresponding $p$-SCLI algorithm of $\cL^*\circpar{\lambda,A}$\\
				\textbf{Convergence rate:} &$\bigO{\frac{1}{1-\rho^*}\ln(1/\epsilon)}$			\\
        \bottomrule
    \end{tabular*}
\end{table}

		\section{Lower Bounds} \label{chapter:lower_bounds}

In the sequel we derive lower bounds on the convergence rate of $p$-SCLI optimization algorithms whose inversion matrices are scalar or diagonal, and discuss the assumptions under which these lower bounds meet matching upper bounds. It is likely that this approach can be also effectively applied for block-diagonal inversion, as well as for a much wider set of inversion matrices whose entries depend on a relatively small set of entries of the matrix to be inverted.\\

\subsection{Scalar and Diagonal Inversion Matrices} \label{section_non_dep}
We derive a lower bound on the convergence rate of $p$-SCLI optimization algorithms for $L$-smooth $\mu$-strongly convex functions over $\reals^d$ with a scalar inversion matrix $N(X)$ by employing Scheme 1 (see Section \ref{subsection:general_case}). Note that since the one-dimensional case was already proven in Section \ref{section:odc}, we may assume that $d\ge2$.\\

First, we need to pick a `hard' matrix in $\posdef{d}{[\mu,L]}$. It turns out that any positive-definite matrix $A\in\posdef{d}{[\mu,L]}$ for which
\begin{align}\label{assump:full_spec_e}
	\set{\mu,L}\subseteq \spec{A} 
\end{align}
will meet this criterion. For the sake of concreteness, let us define 
\begin{align*}
	A\eqdef\operatorname{Diag}(L,\underbrace{\mu,\dots,\mu}_{d-1\text{ times}})
\end{align*}
In which case, 
\begin{align*} 
-\nu\{\mu,L\} = \spec{-\bE N(A)A}
\end{align*}
where $\nu\eqdef\bE [N(A)]$. Thus, to maintain consistency, it must hold that\footnote{On a side note, this reasoning also implies that if the spectrum of a given matrix $A$ contains both positive and negative eigenvalues then $A^{-1}b$ cannot be computed using $p$-SCLIs with scalar inversion matrices.} 
\begin{align} \label{nu_good_range}
\nu\in\circpar{\frac{-2^p}{L},0}
\end{align}
Next, to bound from below 
\begin{align*}
	\rho_* \eqdef \max_{i\in[d]} \absval{\sqrt[p]{\sigma_i(- \nu A)}-1} = \max\left\{|\sqrt[p]{- \nu \mu}-1|,
	|\sqrt[p]{- \nu L}-1| \right\}	
\end{align*}
we split the feasible range of $\nu$ (\ref{nu_good_range}) into three different sub-ranges as follows: 
\begin{table}[H] 
		\centering
    \begin{tabular}{l|l|l} 
		& $\sqrt[p]{- \nu \mu}-1 < 0$ & $\sqrt[p]{- \nu \mu}-1 \ge 0$ \\
		\hline
		& \underline{Case 1}& N/A \\
		$\sqrt[p]{- \nu L}-1 \le 0$  & Range: $[-1/L,0)$  & \\
		& Minimizer: $\nu^*=-1/L$ &\\
		& Lower bounds: $1-\sqrt[p]{\frac{\mu}{L}}$ &\\
		\hline
		&\underline{Case 2} & \underline{Case 3 (requires: $p\ge \log_2\kappa$)}\\
		$\sqrt[p]{- \nu L}-1 > 0$& Range: $(-1/\mu,-1/L)$  & Range: $(-2^p/L,-1/\mu]$\\
		 & Minimizer: $-\circpar{\frac2{\sqrt[p]{ L} +\sqrt[p]{\mu} }}^p $ &Minimizer: $-1/\mu$ \\
		& Lower bound: $\frac{\sqrt[p]{ L/\mu} -1 }{\sqrt[p]{L/\mu} +1}$ &Lower Bound: $\sqrt[p]{\frac{L}{\mu}}-1$
		\end{tabular}
		\caption{Lower bound for $\rho_*$ by subranges of $\nu$} \label{table:nu_subranges}
\end{table}
Therefore,
\begin{align} \label{ineq:spec_all_in_all}
\rho_* \ge \min \left\{ 
1-\sqrt[p]{\frac{\mu}{L}},
\frac{\sqrt[p]{ L/\mu} -1 }{\sqrt[p]{L/\mu} +1},
\sqrt[p]{\frac{L}{\mu}}-1
\right\} = \frac{\sqrt[p]{\kappa}-1}{\sqrt[p]{\kappa}+1}
\end{align}
Where $\kappa\eqdef L/\mu$, upper bounds the condition number of functions in $\posdefun{d}{[\mu,L]}$. Thus, by Scheme 1 we get the following lower bound on the worse-case iteration complexity, 
\begin{align} 
\tilde{\Omega}\circpar{\frac{\sqrt[p]{\kappa}-1}{2}\ln(1/\epsilon)}
\end{align}
As for the diagonal case, it turns out that for any quadratic $\quadab{A,b}\in\posdefun{d}{[\mu,L]}$ which has 
\begin{align}
\mymat{ \frac{L+\mu}{2} & \frac{L-\mu}{2} \\ \frac{L-\mu}{2} & \frac{L+\mu}{2}}
\end{align}
 as a principal sub-matrix of $A$, the best $p$-SCLI optimization algorithm with a diagonal inversion matrix does not improve on the optimal asymptotic convergence rate achieved by scalar inversion matrices (see Section \ref{ap:reduction}). Overall, we obtain the following theorem.
\begin{theorem} \label{thm:lb_ic_dia}
Let $\cA$ be a consistent $p$-SCLI optimization algorithm for $L$-smooth $\mu$-strongly convex functions over $\reals^d$. If the inversion matrix of $\cA$ is diagonal, then there exists a quadratic function $\quadab{A,\bb}\in\posdefun{d}{[\mu,L]}$ such that 
\begin{align} 
\IC_\cA\circpar{\epsilon,\quadab{A,\bb}}=\tilde{\Omega}\circpar{\frac{\sqrt[p]{\kappa}-1}{2}\ln(1/\epsilon)}
\end{align}
where $\kappa = L/\mu$.
\end{theorem}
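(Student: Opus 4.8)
The plan is to reduce the diagonal-inversion case to the scalar case that has already been treated in full, by exhibiting a single hard quadratic function whose structure forces a diagonal inversion matrix to behave no better than a scalar one. Concretely, I would take a function $\quadab{A,\bb}\in\posdefun{d}{[\mu,L]}$ in which the $2\times2$ block
\begin{align*}
M \eqdef \mymat{ \frac{L+\mu}{2} & \frac{L-\mu}{2} \\ \frac{L-\mu}{2} & \frac{L+\mu}{2}}
\end{align*}
appears as a principal sub-matrix of $A$ (for instance placing $M$ in the top-left corner and filling the rest with any admissible entries so that $\spec{A}\subseteq[\mu,L]$). Note that $M$ itself has eigenvalues $L$ and $\mu$, so $\{\mu,L\}\subseteq\spec{A}$ as required by \eqref{assump:full_spec_e}. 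By \corref{cor:inv_std_pol}, for a consistent $p$-SCLI with diagonal inversion matrix $N(X)$ the iteration complexity is lower bounded in terms of $\rho^*=\max_{i\in[d]}\absval{\sqrt[p]{\sigma_i(-\bE N(A)A)}-1}$, so it suffices to show that $-\bE N(A)A$ has an eigenvalue forcing $\rho^*\ge\frac{\sqrt[p]{\kappa}-1}{\sqrt[p]{\kappa}+1}$, at which point Scheme~1 delivers the claimed bound $\tilde{\Omega}\!\circpar{\frac{\sqrt[p]{\kappa}-1}{2}\ln(1/\epsilon)}$ exactly as in the scalar derivation.

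The crux of the argument is spectral. Write $N=\bE N(A)=\Diag{\nu_1,\dots,\nu_d}$ for the diagonal inversion matrix. The key observation is that although $NA$ is generally not diagonal, the presence of the symmetric block $M$ constrains the relevant $2\times2$ principal sub-block of $-NA$, namely $-\Diag{\nu_1,\nu_2}M$, independently of the rest of the matrix. I would argue that the eigenvalues of this $2\times2$ block (or more precisely, the portion of $\spec{-NA}$ that it controls) cannot simultaneously be made close to $1$ for both the $\mu$-direction and the $L$-direction: since $M$ couples the two coordinates through its off-diagonal $\frac{L-\mu}{2}$, no choice of the two scalars $\nu_1,\nu_2$ can align $-\Diag{\nu_1,\nu_2}M$ so that all its eigenvalues avoid the obstruction. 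Computing $\det(-\Diag{\nu_1,\nu_2}M - \lambda I)$ and examining its value at $\lambda=1$ together with the consistency requirement $\rho_\lambda<1$ (which by \lemref{lem:comp_poly} forces the spectrum of $-NA$ into $\reals^+$) should pin down that at least one eigenvalue $\sigma$ satisfies $\absval{\sqrt[p]{\sigma}-1}\ge\frac{\sqrt[p]{\kappa}-1}{\sqrt[p]{\kappa}+1}$, precisely mirroring the three-subrange analysis summarized in Table~\ref{table:nu_subranges}.

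The main obstacle I anticipate is exactly this spectral coupling step: unlike the scalar case, where $-\nu A$ is symmetric and its spectrum is simply $-\nu\,\spec{A}$, here $-NA$ is a product of a diagonal matrix with a symmetric one and need not be symmetric, so its eigenvalues are not the scaled eigenvalues of $A$. I would handle this by restricting attention to the invariant $2\times2$ sub-block determined by $M$ and working with its characteristic polynomial directly, using \lemref{lem:comp_poly} to rule out complex or negative eigenvalues (consistency forces $\spec{-NA}\subseteq\reals^+$), and then showing that the two eigenvalues of $-\Diag{\nu_1,\nu_2}M$ straddle the interval $[\mu,L]$ in the sense needed to reproduce the scalar bound. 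The detailed verification that this reduction is valid and that the off-diagonal entries of $A$ outside the block do not help is what is deferred to Section~\ref{ap:reduction}; my proposal is to make that reduction explicit and then invoke the already-completed scalar computation verbatim.
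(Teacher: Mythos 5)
Your proposal is correct and is essentially the paper's own argument: the paper's Appendix (Section~\ref{ap:reduction}) proves the theorem for $d=2$ using exactly your matrix $M$ (there called $B$), computes the eigenvalues of $-\Diag{\alpha,\beta}B$ in closed form, observes that the eigenvalue spread is minimized at $\alpha=\beta$ so the diagonal case collapses to the scalar case with $\nu=\tfrac{\alpha+\beta}{2}$, and then invokes the three-subrange analysis of Table~\ref{table:nu_subranges} exactly as you propose. The one point needing care is your phrase ``filling the rest with any admissible entries'': since $-NA$ is not symmetric, the eigenvalues of a $2\times 2$ principal sub-block are not in general eigenvalues of the full matrix, so the embedding must be a direct sum (the paper embeds the two-dimensional case block-diagonally so the sub-block is genuinely invariant), after which your reduction to the completed scalar computation goes through verbatim.
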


\subsection{Is This Lower Bound Tight?} \label{section:is_this_tight}
A natural question now arises: is the lower bound stated in \thmref{thm:lb_ic_dia} tight? In short, it turns out that for $p=1$ and $p=2$ the answer is positive. For $p>2$ the answer heavily depends on whether a suitable spectral decomposition is within reach. Obviously, computing the spectral decomposition for a given positive definite matrix $A$ is at least as hard as finding the minimizer of a quadratic function whose hessian is $A$. To avoid this, we will later restrict our attention to linear coefficients matrices which allow efficient implementation. 	
\begin{description}
\item [A matching upper bound for $p=1$] In this case the lower bound stated in \thmref{thm:lb_ic_dia} is simply attained by FGD (see Section \ref{spec:fgd}). 
\item [A matching upper bound for $p=2$] In this case there are two 2-SCLI optimization algorithm which attain this bound, namely, Accelerated Gradient Descent and The Heavy Ball method (see Section \ref{spec:agd}), whose inversion matrices are scalar and correspond to Case 1 and Case 2 in Table \ref{table:nu_subranges}, i.e.,
\begin{align*}
N_{\text{HB}} &= -\circpar{\frac{2}{\sqrt{L}+\sqrt{\mu}}}^2I_d,\quad
N_{\text{AGD}} = \frac{-1}{L}I_d 
\end{align*}
Although HB obtains the best possible convergence rate in the class of 2-SCLIs with diagonal inversion matrices, it has a major disadvantage. When applied on general smooth and strongly-convex functions, one cannot guarantee global convergence. That is, in order to converge correctly, HB must be initialized close enough to the minimizer (see Section 3.2.1 in \cite{polyak1987introduction}). Indeed, if the initialization point is too far from the minimizer then HB may diverge as shown in Section 4.5 in \cite{lessard2014analysis}.  In contrast to this, AGD attains a global linear convergence with a slightly worse factor. Put differently, the fact HB is highly adapted to quadratic functions prevents it from converging globally to the minimizers of general smooth and strongly convex functions.

\item [A matching upper bound for $p>2$] In Subsection \ref{section:new_algo} we show that when no restriction on the coefficient matrices is imposed, the lower bound shown in \thmref{thm:lb_ic_dia} is tight, i.e., for any $p\in\bN$ there exists a matching $p$-SCLI optimization algorithm with scalar inversion matrix whose iteration complexity is
\begin{align}
\bigtO{\sqrt[p]{\kappa}\ln(1/\epsilon)}
\end{align}
In light of the existing lower bound which scales according to  $\sqrt{\kappa}$, this result may seem surprising at first. However, there is a major flaw in implementing these seemingly ideal $p$-SCLIs. In order to compute the corresponding coefficients matrices one has to obtain a very good approximation for the spectral decomposition of the positive definite matrix which defines the optimization problem. Clearly, this approach is rarely practical. To remedy this situation we focus on linear coefficient matrices
which admit a relatively low computational cost per iteration. That is, we assume that there exist real scalars $\alpha_1,\dots,\alpha_{p-1}$ and $\beta_1,\dots,\beta_{p-1}$ such that
\begin{align} \label{definition:first_linear_coefficient_matrices}
C_j(X) &= \alpha_j X + \beta_j I_d,\quad j=0,1,\dots,p-1
\end{align}
We believe that for these type of coefficient matrices the lower bound derived in \thmref{thm:lb_ic_dia} is not tight. Precisely, we conjecture that for any $0<\mu<L$ and for any consistent $p$-SCLI optimization algorithm $\cA$ with diagonal inversion matrix and linear coefficients matrices, there exists $\quadab{A,\bb}\in\posdefun{d}{[\mu,L]}$ such that 
\begin{align*}
\rho_\lambda(\syspol{\cA}{\lambda,X}) \ge \frac{\sqrt{\kappa}-1}{\sqrt{\kappa}+1}
\end{align*}
where $\kappa\eqdef L/\mu$. Proving this may allow to derive tight lower bounds for many optimization algorithm in the field of Machine Learning, such as SAG (see Section \ref{section_spec_algo}), whose structure is often very close to that of $p$-SCLIs with linear coefficients matrices. By Scheme 2, this conjecture may be equivalently stated as follows: suppose $q(z)$ is a $p$-degree monic real polynomial such that $q(1)=0$. Then, for any polynomial $r(z)$ of degree $p-1$ and for any $0<\mu<L$, there exists $\eta\in[\mu,L]$ such that 
\begin{align*}
\rho(q(z) - \eta r(z) ) &\ge \frac{\sqrt{L/\mu} -1}{\sqrt{L/\mu}+1}
\end{align*}
That being so, can we do better if we allow families of quadratic functions $\posdefun{d}{\Sigma}$ where $\Sigma$ are not necessarily continuous intervals? It turns out that the answer is positive. Indeed, in Section \ref{section:Lift_factor_3} we present a $3$-SCLI optimization algorithm with linear coefficient matrices which, by being intimately adjusted to quadratic functions whose hessian admits large enough spectral gap, beats the lower bound of Nemirovsky and Yudin (\ref{ineq:sqrtlb}). This apparently contradicting result is also discussed in Section \ref{section:Lift_factor_3}, where we show that lower bound (\ref{ineq:sqrtlb}) is established by employing quadratic functions whose hessian admits spectrum which densely populates $[\mu,L]$.  We would like to stress that as useful as such optimization algorithm might be, it is provided only for the purpose of demonstrating the detailed analysis which this framework allows and for indicating that applications which exhibit spectrum that distribute differently in $[\mu,L]$ might admit faster general solvers than what is dictated by lower bound (\ref{ineq:sqrtlb}). 

\end{description}

			\section{Upper Bounds} \label{section:ub} 
Up to this point we have projected various optimization algorithms on this framework of $p$-SCLI optimization algorithms, thereby converting questions on convergence properties into questions on moduli of roots of polynomials. In what follows, we shall head in the opposite direction. That is to say, first we define a polynomial (see Definition (\ref{def:l_lambda})) which meets a prescribed set of constraints, and then we form the corresponding  $p$-SCLI optimization algorithm. As stressed in Section \ref{section:is_this_tight}, we will focus exclusively on linear coefficient matrices which admit a low per-iteration computational cost and allow a straightforward extension to general smooth and strongly convex functions. Surprisingly enough, this allows a systematic recovering of FGD, HB, AGD, as well as establishing new optimization algorithms which allow better utilization of second-order information. This line of inquiry is particularly important due to the obscure nature of AGD, and further emphasizes its algebraic characteristic. We defer stochastic coefficient matrices, as in SDCA, (Section \ref{section:sdca_case_study}) to future work.\\

This section is organized as follows: First we apply Scheme 3 to derive general $p$-SCLIs with linear coefficients matrices; Next, following this line of argument, we recover AGD and HB as optimal instantiations under this setting; Finally, although general $p$-SCLI algorithms are exclusively specified for quadratic functions, we show how $p$-SCLIs with linear coefficient matrices can be extended to general smooth and strongly convex functions.

\subsection{Linear Coefficient Matrices} \label{section:lin_coeff_mat}
In the sequel we instantiate Scheme 3 (see Section \ref{subsection:gen_bounds}) for  $\cC_{\text{Linear}}$, the family of deterministic linear coefficient matrices. \\

First, note that due to consistency constraints, inversion matrices of constant $p$-SCLIs with linear coefficient matrices must be either constant scalar matrices or else be computationally equivalent to $A^{-1}$. Therefore, since our motivation for resorting to linear coefficient matrices was efficiency, we can safely assume that $N(X)=\nu I_d$ for some $\nu\in(-2^p/L,0)$. Following Scheme 3, we now seek the optimal characteristic polynomial in $\mathfrak{L}(p,\nu I_d,\posdefun{d}{[\mu,L]}, \cC_{\text{Linear}})$ with a compatible set of parameters (see Section \ref{subsection:gen_bounds}). In the presence of linearity, the characteristic polynomials takes the following simplified form
\begin{align*}
	\syspol{}{\lambda,X} &= \lambda^p - \sum_{j=0}^{p-1} (a_j X + b_j I_d) \lambda^j,\quad a_j,b_j\in\reals
\end{align*}
By (\ref{eq:max_over_ells}) we have 
\begin{align*}
	\rho_\lambda(\syspol{}{\lambda,X})&= \max \myset{|\lambda|}{\exists i\in[d],~ \ell_i(\lambda)=0  }
\end{align*}
where $\ell_i(\lambda)$ denote the factors of the characteristic polynomial as in (\ref{eq:system_polys}). That is, denoting the eigenvalues of $X$ by $\sigma_1,\dots,\sigma_d$ we have
$$\ell_i(\lambda)=\lambda^p - \sum_{j=0}^{p-1} (a_j \sigma_i + b_j) \lambda^j=\lambda^p - \sigma_i \sum_{j=0}^{p-1} a_j \lambda^j+\sum_{j=0}^{p-1} b_j \lambda^j$$
Thus, we can express the maximal root radius of the characteristic polynomial over $\posdefun{d}{[\mu,L]}$ in terms of the following polynomial 
\begin{align} \label{definition:general_characteristic}
	\ell(\lambda,\eta) &= \lambda^p-(\eta a(\lambda)+b(\lambda)) 
\end{align}
for corresponding real univariate $p-1$ degree polynomials $a(\lambda)$ and $b(\lambda)$, whereby
\begin{align*}
	\max_{A\in \posdef{d}{\Sigma} } \rho_\lambda \circpar{\syspol{}{\lambda,A}}&=\max_{\eta\in[\mu,L]} \rho\circpar{ \ell(\lambda,\eta) }
\end{align*}
That being the case, finding the optimal characteristic polynomial in $\mathfrak{L}$ translates to the following minimization problem,
\begin{center}
\fbox{ 
 \addtolength{\linewidth}{-2\fboxsep}%
 \addtolength{\linewidth}{-2\fboxrule}%
\begin{minipage}{0.5\linewidth}\vspace{-1em}
	\begin{align}
			\underset{\ell(\lambda,\eta)\in\mathfrak{L}}{\text{minimize}} & ~\max_{\eta\in[\mu,L]} \rho_\lambda(\ell(\lambda,\eta))\nonumber\\
			\text{s.t. } & \ell(1,\eta) = -\nu\eta,\quad \eta \in [\mu,L]  \label{eq:lin_cor1} \\
						&\rho_\lambda(\ell(\lambda,\eta)) < 1  \label{eq:lin_cor2}
	\end{align}
 \end{minipage}
}				
\end{center}

(Note that in this case we think of $\mathfrak{L}$ as a set of polynomials whose variable assumes scalars). Let us calculate the optimal characteristic polynomial for the setting where the lifting factor is $p=1$, the family of quadratic functions under considerations is $\posdefun{d}{[\mu,L]}$ and the inversion matrix is $N(X)=\nu I_d,~\nu\in(-2/L,0)$. In which case (\ref{definition:general_characteristic}) takes the following form
\begin{align*}
\ell(\lambda,\eta) = \lambda - \eta a _0 - b_0
\end{align*}
where $a_0,b_0$ are some real scalars. In order to satisfy (\ref{eq:lin_cor1}) for all $\eta\in[\mu,L]$, we have no other choice but to set 
\begin{align*}
a_0 &= \nu,\quad b_0 = 1
\end{align*} 
which implies
\begin{align*}
	\rho_\lambda(\ell(\lambda,\eta)) = 1+\nu\eta 
\end{align*}
Since $\nu\in(-2/L,0)$, condition \ref{eq:lin_cor2} follows, as well. The corresponding 1-SCLI optimization algorithm is 
\begin{align*}
\bx^{k+1}=(I+\nu A) \bx^{k} + \nu \bb
\end{align*}
and its first-order extension (see Section \ref{section:f_o_e} below) is precisely FGD (see Section \ref{section_spec_algo}). Finally, note that the corresponding root radius is bounded from above by
\begin{align*}
\frac{\kappa}{\kappa+1}
\end{align*}
for $\nu=-1/L$, the minimizer in Case 2 of Table \ref{table:nu_subranges}, and by
\begin{align*}
\frac{\kappa-1}{\kappa+1}
\end{align*}
for $\nu=\frac{-2}{\mu+L}$, the minimizer in Case 3 of Table \ref{table:nu_subranges}. This proves that FGD is optimal for the class of 1-SCLIs with linear coefficient matrices. \figref{figure:FGD} shows how the root radius of the characteristic polynomial of FGD is related to the eigenvalues of the hessian of the quadratic function under consideration.
\begin{figure}[H] \label{figure:FGD}
  \centering
     \includegraphics[scale=0.6,trim= 0 240 0 250,clip]{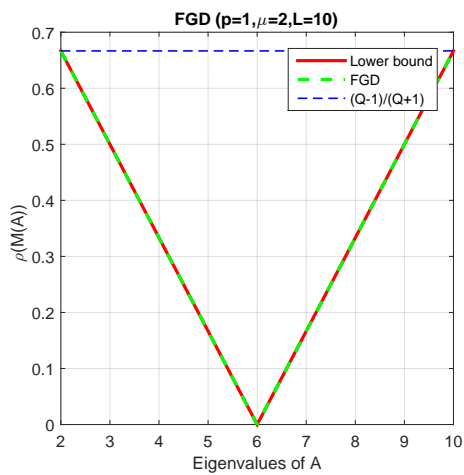}
  \caption{The root radius of FGD vs. various eigenvalues of the corresponding hessian.}
\end{figure}

\subsection{Recovering AGD and HB}
Let us now calculate the optimal characteristic polynomial for the setting where the lifting factor is $p=2$, the family of quadratic functions under considerations is $\posdefun{d}{[\mu,L]}$ and the inversion matrix is $N(X)=\nu I_d,~\nu\in(-4/L,0)$ (recall that the restricted range of $\nu$ is due to consistency). In which case (\ref{definition:general_characteristic}) takes the following form
\begin{align} \label{def:poly_q_p2}
\ell(\lambda,\eta)= \lambda^2 - \eta(a_1 \lambda + a_0 ) - (b_1 \lambda + b_0)
\end{align}
for some real scalars $a_0,a_1,b_0,b_1$. Our goal is to choose $a_0,a_1,b_0,b_1$ so as to minimize $$\max_{\eta\in[\mu,L]}\rho_\lambda(\ell(\lambda,\eta))$$ while maintaining conditions (\ref{eq:lin_cor1}) and (\ref{eq:lin_cor2}). Note that $\ell(\lambda,\eta)$, when seen as a function of $\eta$, forms a linear path of quadratic functions. Thus, a natural way to achieve this goal is to choose $\ell(\lambda,\eta)$ so that $\ell(\lambda,\mu)$ and $\ell(\lambda,L)$  take the form of the 'economic' polynomials introduced in \lemref{lem:comp_poly}, namely
\begin{align*}
\circpar{\lambda-(1-\sqrt{r})}^2
\end{align*}
for  $r=-\nu\mu$ and $r=-\nu L$, respectively, and hope that for others $\eta\in(\mu,L)$, the roots of $\ell(\lambda,\eta)$ would still be of  small magnitude. Note that due to the fact that $\ell(\lambda,\eta)$ is linear in $\eta$, condition (\ref{eq:lin_cor1}) readily holds for any $\eta\in(\mu,L)$. This yields the following two equations 
\begin{align*}
\ell(\lambda,\mu) &= \circpar{\lambda-(1-\sqrt{-\nu\mu)}}^2\\
\ell(\lambda,L) &= \circpar{\lambda-(1-\sqrt{-\nu L)}}^2
\end{align*}
Substituting (\ref{def:poly_q_p2}) for $\ell(\lambda,\eta)$ and expanding the r.h.s of the equations above we get
\begin{align*}
\lambda^2 - ( a_1\mu +b_1)\lambda - (a_0\mu + b_0) &= \lambda^2 -2(1-\sqrt{-\nu\mu})\lambda + (1-\sqrt{-\nu\mu})^2 \\
\lambda^2 - ( a_1  L +b_1)\lambda - (a_0 L+ b_0) &=  \lambda^2 -2(1-\sqrt{-\nu L})\lambda + (1-\sqrt{-\nu L})^2 
\end{align*}
Which can be equivalently expressed as the following system of linear equations
\begin{align}
- ( a_1\mu +b_1) &= -2(1-\sqrt{-\nu\mu}) \label{eq_params_1}\\
- (a_0 \mu+ b_0) &= (1-\sqrt{-\nu \mu})^2 \label{eq_params_2} \\
- ( a_1 L  +b_1) &= -2(1-\sqrt{-\nu L}) \label{eq_params_3}\\
- (a_0 L+ b_0) &= (1-\sqrt{-\nu L})^2  \label{eq_params_4}
\end{align}
Multiplying \eqref{eq_params_1} by -1 and add to it \eqref{eq_params_3}. Next, multiply \eqref{eq_params_2} by -1 and add to it \eqref{eq_params_4} yields
\begin{align*}
 a_1(\mu-L) &= 2\sqrt{-\nu}(\sqrt{L}-\sqrt{\mu})  \\
a_0 (\mu-L) &= (1-\sqrt{-\nu L})^2  -(1-\sqrt{-\nu \mu})^2 
\end{align*}
Thus,
\begin{align*}
a_1 &= \frac{-2\sqrt{-\nu} }{\sqrt{\mu}+\sqrt{L}},\qquad
a_0 = \frac{2\sqrt{-\nu}} {\sqrt{\mu}+\sqrt{L}} + \nu
\end{align*}
Remarkably enough, plugging in $\nu=-1/L$ (see Table \ref{table:nu_subranges}) into the equations above and solving for $b_1$ and $b_0$ yields a 2-SCLI optimization algorithm whose extension is precisely AGD (see Section \ref{section_spec_algo}). Following the same derivation only this time by setting $$\nu = -\circpar{\frac2{\sqrt{L}+\sqrt{\mu}}}^2$$ yields the Heavy-Ball method.\\  
\begin{figure}[H] \label{figure:AGD_HB}
  \centering
     $\left.\includegraphics[scale=0.6,trim= 100 240 150 250,clip]{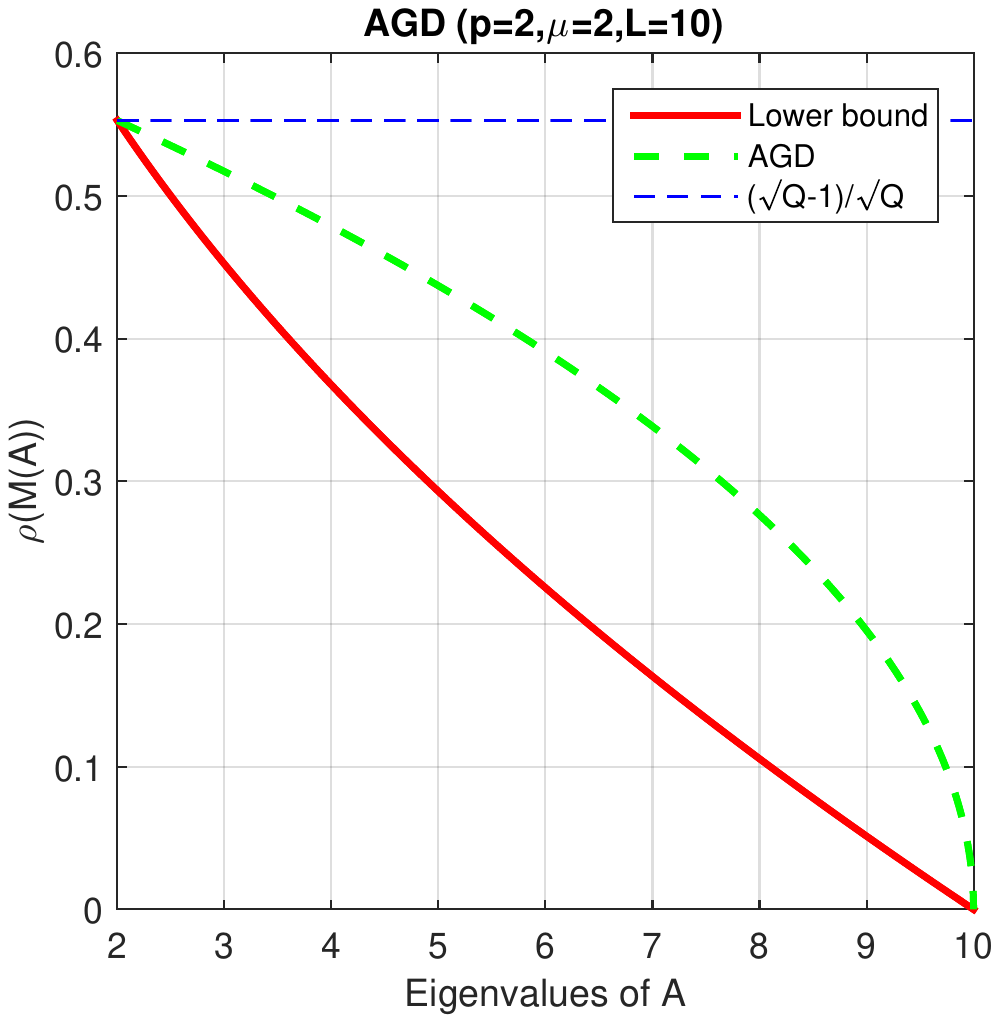}
		     \includegraphics[trim= 150 240 100 250,clip,scale=0.6]{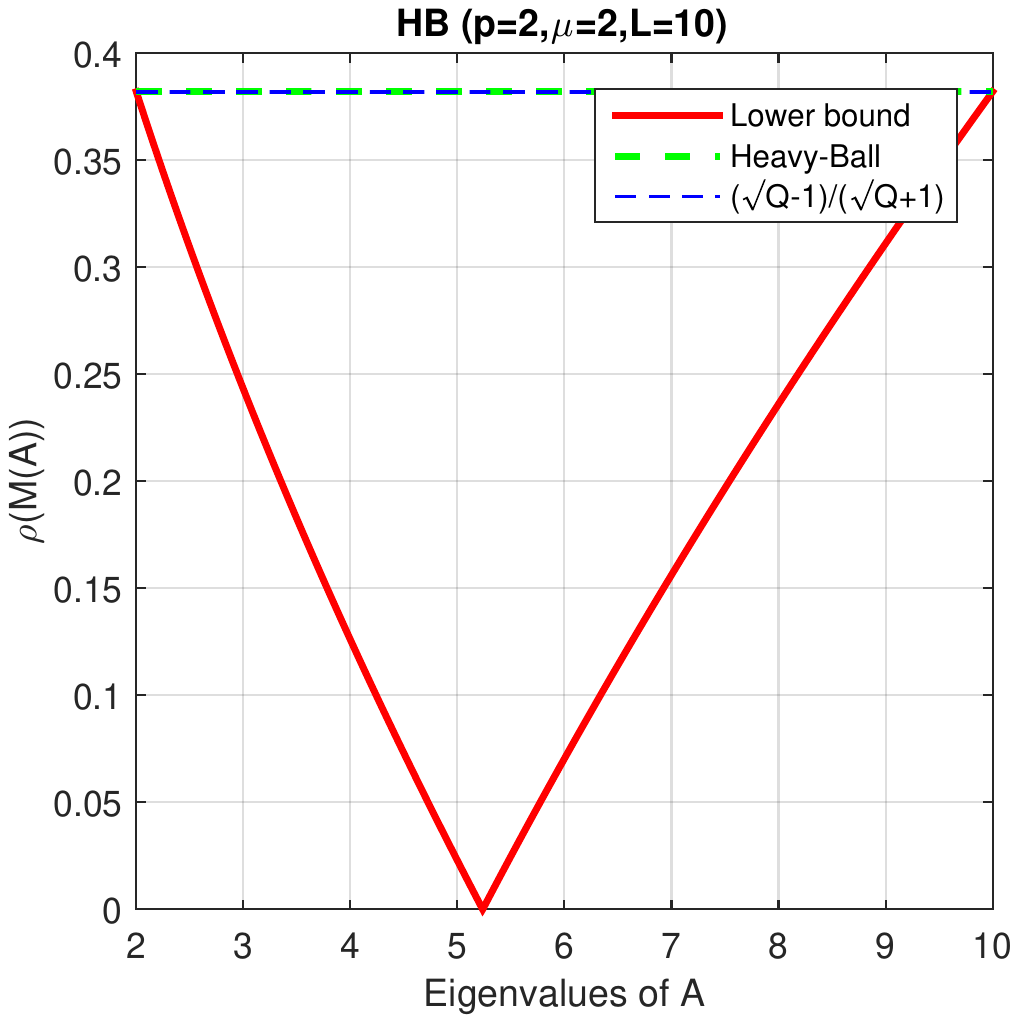}\right.$
  \caption{The root radius of AGD and HB vs. various eigenvalues of the corresponding hessian.}
\end{figure}
Moreover, using standard formulae for roots of quadratic polynomials one can easily verify that
\begin{align*}
\rho_\lambda\circpar{\ell(\lambda,\eta)}\leq \frac{\sqrt{\kappa}-1}{\sqrt{\kappa}},~\eta\in[\mu,L]
\end{align*}
for AGD, and 
\begin{align*}
\rho_\lambda\circpar{\ell(\lambda,\eta)}\leq \frac{\sqrt{\kappa}-1}{\sqrt{\kappa}+1},~\eta\in[\mu,L]
\end{align*}
for HB. In particular, Condition \ref{eq:lin_cor2} holds.  \figref{figure:AGD_HB} shows how the root radii of the characteristic polynomials of AGD and HB are related to the eigenvalues of the hessian of the quadratic function under consideration. 

\subsection{First-Order Extension for \texorpdfstring{$p$}{p}-SCLIs with Linear Coefficient Matrices} \label{section:f_o_e}
As mentioned before, as coefficient matrices of $p$-SCLIs can take any form, it is not clear how to use a given $p$-SCLI algorithm, efficient as it may be, for minimizing general smooth and strongly convex functions. That being the case, one could argue that recovering the specifications of, say, AGD for quadratic functions does not necessarily imply how to recover AGD itself. Fortunately, consistent $p$-SCLIs with linear coefficients can be reformulated as optimization algorithms for general smooth and strongly convex functions in a very natural way by substituting $\nabla f(\bx)$ for $A\bx+\bb$, while preserving the original convergence properties to a large extent. In the sequel we briefly discuss this appealing property, namely, canonical first-order extension, which completes the path from the world of polynomials to the world optimization algorithm for general smooth and strongly convex functions. \\

Let $\cA\eqdef(\syspol{\cA}{\lambda,X},N(X))$ be a consistent $p$-SCLI optimization algorithm with a scalar inversion matrix, i.e., $N(X) \eqdef \nu I_d,~\nu\in(-2^p/L,0)$, and linear coefficient matrices 
\begin{align} \label{def:lin_coeff}
C_j(X) = a_j X + b_j I_d ,~\quad j=0,\dots,p-1
\end{align}
where $a_0,\dots,a_{p-1}\in\reals$ and  $b_0,\dots,b_{p-1}\in\reals$ denote real scalars. Recall that by consistency, for any $\quadab{A,\bb}\in\posdefun{d}{\Sigma}$ it holds that
\begin{align*}
	\sum_{j=0}^{p-1}C_j(A) =  & I + \nu A
\end{align*} 
Thus
\begin{align} \label{eq:line_coeff_cons}
	\sum_{j=0}^{p-1} b_j &=1 \text{ and } \sum_{j=0}^{p-1} a_j =\nu
\end{align}
By the very definition of $p$-SCLI optimization algorithms (\defref{definition:pscli}), we have that 
\begin{align*}
	\bx^{k} = C_0(A) \bx^{k-p} + C_1(A) \bx^{k-(p-1)} + \dots +C_{p-1}(A) \bx^{k-1} + \nu \bb
\end{align*} 
Substituting $C_j(A)$ for (\ref{def:lin_coeff}), gives
\begin{align*}
	\bx^{k} = (a_0 A + b_0) \bx^{k-p} + (a_1 A + b_1)\bx^{k-(p-1)} + \dots +(a_{p-1} A + b_{p-1}) \bx^{k-1} + \nu \bb
\end{align*}
Rearranging and plugging in \ref{eq:line_coeff_cons}, we get 
\begin{align*}
	\bx^{k} &= 	a_0 (A \bx^{k-p} + \bb) + 	a_1 (A \bx^{k-(p-1)} + \bb) +\dots+ 
		a_{p-1} (A \bx^{k-1} + \bb)\\ &\quad+ 
	 b_0 \bx^{k-p}  + b_1 \bx^{k-(p-1)} + \dots + b_{p-1} \bx^{k-1} 
\end{align*}
Finally, by substituting $A\bx + \bb$ for its analog $\nabla f(\bx)$, we arrive at the following canonical first-order extension of $\cA$
\begin{align} \label{eq:nabla_count}
	\bx^{k} &=  \sum_{j=0}^{p-1} b_j \bx^{k-(p-j)} + \sum_{j=0}^{p-1} a_j \nabla f(\bx^{k-(p-j)})
\end{align}
Being applicable to a much wider collection of functions, how well should we expect the canonical extensions to behave? The answer is that when initialized close enough to the minimizer, one should expect a linear convergence of essentially the same rate. A formal statement is given by the theorem below which easily follows from Theorem 1 in Section 2.1, \cite{polyak1987introduction} for
\begin{align*}
  g(\bx^{k-p},\bx^{k-(p-1)},\dots,\bx^{k-1}) &=  \sum_{j=0}^{p-1} b_j \bx^{k-(p-j)} + \sum_{j=0}^{p-1} a_j \nabla f(\bx^{k-(p-j)})
\end{align*}
\begin{theorem}
Suppose $f:\reals^d\to\reals$ is an $L$-smooth $\mu$-strongly convex function and let $\bx^*$ denotes its minimizer. Then, for every $\epsilon>0$, there exist $\delta>0$ and $C>0$ such that if 
\begin{align*}
	\norm{\bx^j-\bx^*}\le \delta,\quad j=0,\dots,p-1
\end{align*}
then
\begin{align*}
	\norm{\bx^k-\bx^0}\le C(\rho^* + \epsilon)^k,\quad k =p,p+1,\dots
\end{align*}
where 
\begin{align*}
\rho^* = \sup_{\eta\in\Sigma} \rho\circpar{\lambda^p - \sum_{j=0}^{p-1} (a_j \eta + b_j) \lambda^j}
\end{align*}
\end{theorem}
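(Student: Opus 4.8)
The plan is to recognize the canonical extension (\ref{eq:nabla_count}) as a stationary nonlinear iteration admitting $\bx^*$ as a fixed point, and then to invoke the classical local stability theorem for such iterations (Theorem 1, Section 2.1 in \cite{polyak1987introduction}), which is an Ostrowski-type result: a fixed point whose linearization has spectral radius below one is locally attractive at a geometric rate arbitrarily close to that spectral radius. Concretely, I would write the update as $\bx^k = g(\bx^{k-p},\dots,\bx^{k-1})$ with
\[
g(\bu_0,\dots,\bu_{p-1}) = \sum_{j=0}^{p-1} b_j \bu_j + \sum_{j=0}^{p-1} a_j \nabla f(\bu_j).
\]
First I would verify that $\bx^*$ is a fixed point: feeding $\bu_0 = \cdots = \bu_{p-1} = \bx^*$ and using the consistency relations (\ref{eq:line_coeff_cons}), namely $\sum_j b_j = 1$, together with $\nabla f(\bx^*) = 0$, gives $g(\bx^*,\dots,\bx^*) = \bx^*$.

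Second I would linearize $g$ about $\bx^*$. With $H \eqdef \nabla^2 f(\bx^*)$, whose spectrum lies in $[\mu,L]\subseteq\Sigma$, the partial derivative of $g$ with respect to $\bu_j$ at $\bx^*$ is $b_j I_d + a_j H$, and the Taylor remainder of $\nabla f$ is controlled by the $L$-smoothness. Stacking $p$ consecutive errors $\be^k = \bx^k - \bx^*$ into a state vector $\bz^k = (\be^{k-p+1},\dots,\be^k) \in \reals^{dp}$ turns the lifted recursion into $\bz^k = M\bz^{k-1} + o(\norm{\bz^{k-1}})$, where $M$ is the block companion matrix of $I_d\lambda^p - \sum_{j=0}^{p-1}(a_j H + b_j I_d)\lambda^j$. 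The crucial computation is the spectrum of $M$: since each block $b_j I_d + a_j H$ is a polynomial in the single symmetric matrix $H$, they are simultaneously diagonalized by the orthogonal $U$ with $H = U\Lambda U^\top$, so the characteristic polynomial of $M$ factors as $\prod_{i=1}^d \ell(\lambda,\eta_i)$ with $\eta_1,\dots,\eta_d$ the eigenvalues of $H$. Hence
\[
\rho(M) = \max_{i \in [d]} \rho\circpar{\ell(\lambda,\eta_i)} \le \sup_{\eta \in \Sigma} \rho\circpar{\ell(\lambda,\eta)} = \rho^*,
\]
the inequality holding because every $\eta_i$ lies in $\Sigma$. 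By consistency (\thmref{thm:conv_correct}) each factor has root radius below one, and since $\eta \mapsto \rho(\ell(\lambda,\eta))$ is continuous on the compact interval $[\mu,L]$, we in fact have $\rho^* < 1$.

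Finally I would invoke the local stability theorem. Because $\rho(M) \le \rho^* < 1$, for the prescribed $\epsilon > 0$ there is a vector norm in which $\norm{M} \le \rho(M) + \epsilon \le \rho^* + \epsilon$; choosing $\delta$ small enough that the first-order remainder is dominated by the remaining gap on the $\delta$-ball makes the lifted map a contraction there, yielding $\norm{\bz^k} \le C(\rho^* + \epsilon)^k$ and hence the stated geometric bound on the deviation from the minimizer $\bx^*$, with $C$ absorbing the norm-equivalence factor and the initial-state norm (controlled by $\delta$). I expect the main obstacle to be the spectral identification of $M$: justifying the factorization of its characteristic polynomial through the simultaneous diagonalization of the blocks $b_j I_d + a_j H$, and then converting the bound $\rho(M)\le\rho^*$ into the geometric rate through the $C^1$ local-convergence machinery, taking care that $L$-smoothness supplies exactly the continuity of $\nabla f$ needed to control the remainder even though global higher-order differentiability is not assumed.
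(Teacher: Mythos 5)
Your proposal is correct and takes essentially the same route as the paper, which disposes of this theorem by citing Theorem 1 of Section 2.1 in \cite{polyak1987introduction} applied to exactly the map $g$ you write down; your fixed-point check via $\sum_j b_j = 1$ and $\nabla f(\bx^*)=0$, the linearization with blocks $b_j I_d + a_j \nabla^2 f(\bx^*)$, and the companion-matrix factorization $\prod_{i=1}^d \ell(\lambda,\eta_i)$ giving $\rho(M)\le\rho^*<1$ are precisely the details the paper delegates to that citation. (Note that the bound $\norm{\bx^k-\bx^0}$ in the statement is evidently a typo for $\norm{\bx^k-\bx^*}$, which is what you correctly prove.)
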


Unlike general $p$-SCLIs with linear coefficient matrices which are guaranteed to converge only when initialized close enough to the minimizer, AGD converges linearly, regardless of the initialization points, for any smooth and strongly convex function. This merits further investigation as to the precise principles which underlie $p$-SCLIs of this kind.

\bibliography{thesis_bib}


\backmatter
\begin{appendices}
\chapter{An appendix}

\section{Optimal \texorpdfstring{$p$}{p}-SCLI for Unconstrained Coefficient Matrices} \label{section:new_algo}
In the sequel we employ Scheme 3 (see Section \ref{subsection:gen_bounds}) to show that, when no constraints are imposed on the functional dependency of the coefficient matrices, the lower bound shown in \thmref{thm:lb_ic_dia} is tight. To this end, recall that in \lemref{lem:comp_poly} we have shown that the lower bound on the maximal modulus of roots of a polynomials which evaluate at $z=1$ to some $r\ge0$ is uniquely attained by the following polynomial
\begin{align*}
	q_r^*(z) \eqdef \circpar{z-(1-\sqrt[p]{r})}^p
\end{align*}
Thus, by choosing coefficients matrices which admit the same form, we obtain the optimal convergence rate as stated in \thmref{thm:lb_ic_dia}.\\

Concretely, let $p\in\bN$ be some lifting factor, let $N(X) =\nu I_d,~\nu\in(-2^p/L,0)$ be a fixed scalar matrix and let $\quadab{A,\bb}\in\posdefun{d}{\Sigma}$ be some quadratic function. 
\lemref{lem:comp_poly} implies that for each $\eta\in\spec{-\nu A}$ we need the corresponding factor of the characteristic polynomial to be 
\begin{align}\label{eq:eco_poly_coeff}
\ell_j(\lambda) &= \circpar{\lambda-(1-\sqrt[p]{\eta})}^p\nonumber	\\
&=\sum_{k=0}^p \binom{p}{k}\circpar{ \sqrt[p]{-\nu\eta} -1}^{p-k}  \lambda^{k}
\end{align}
This is easily accomplished using the spectral decomposition of $A$ by 
$$\Lambda\eqdef U^\top A U$$ 
where $U$ is an orthogonal matrix and $\Lambda$ is a diagonal matrix. Note that since $A$ is a positive definite matrix such a decomposition must always exist. We define $p$ coefficient matrices $C_0,C_1,\dots,C_{p-1}$ in accordance with \eqref{eq:eco_poly_coeff} as follows
\begin{align*}
C_k = U \mymat{-\binom{p}{k}\circpar{ \sqrt[p]{-\nu \Lambda_{11}} -1}^{p-k} \\ & -\binom{p}{k}\circpar{ \sqrt[p]{-\nu \Lambda_{22}} -1}^{p-k}\\ && \ddots  \\ &&& -\binom{p}{k}\circpar{ \sqrt[p]{-\nu \Lambda_{dd}} -1}^{p-k} }U^\top
\end{align*}
By using \thmref{thm:conv_correct}, it is easily verified that these coefficient matrices form a consistent $p$-SCLI optimization algorithm whose characteristic polynomial's root radius is
\begin{align*}
\max_{j=1,\dots,d} \absval{ \sqrt[p]{-\nu \mu_j} - 1} 
\end{align*}
Choosing  
\begin{align*}
\nu=-\circpar{\frac2{\sqrt[p]{ L} +\sqrt[p]{\mu} }}^p 
\end{align*}
according to Table \ref{table:nu_subranges}, produces an optimal $p$-SCLI optimization algorithm for this set of parameters. It is noteworthy that other suitable decompositions can be used for deriving optimal $p$-SCLIs, as well.\\

As a side note, since the cost of computing each iteration in $\in\reals^{pd}$ grows linearly with the lifting factor $p$, the optimal choice of $p$ with respect to the condition number $\kappa$ yields a $p$-SCLI optimization algorithm whose iteration complexity is $\Theta(\ln(\kappa)\ln(1/\epsilon))$. Clearly, this result is of theoretical interest only as this would require a spectral decomposition of $A$, which, if no other structural assumptions are imposed, is an even harder task than computing the minimizer of $\quadab{A,\bb}$.

\section{Lifting Factor \texorpdfstring{$\ge$}{>=} 3} \label{section:Lift_factor_3}
In Section \ref{section:is_this_tight} we conjecture that for any $p$-SCLI optimization algorithm $\cA\eqdef\circpar{\syspola,N(X)}$, with diagonal inversion matrix and linear coefficient matrices there exists some $A\in\posdefun{d}{[\mu,L]}$ such that 
\begin{align} \label{ineq:sqrt2_lb}
	\rho_\lambda(\syspola)&\ge \frac{\sqrt{\kappa} - 1}{\sqrt{\kappa} + 1}  
\end{align}
However, it may be possible to overcome this barrier by focusing on a subclass of $\posdefun{d}{[\mu,L]}$. Indeed, recall that the polynomial analogy of this conjecture states that for any monic real $p$ degree polynomial $q(z)$ such that $q(1)=0$ and for any polynomial $r(z)$ of degree $p-1$, there exists $\eta\in[\mu,L]$ such that 
\begin{align*}
\rho(q(z) - \eta r(z) ) &\ge \frac{\sqrt{\kappa} -1}{\sqrt{\kappa}+1}
\end{align*}
where $\kappa\eqdef L/\mu$. This implies that we may be able to tune $q(z)$ and $r(z)$ so as to obtain a convergence rate, which breaks \ineqref{ineq:sqrt2_lb}, for quadratic function whose Hessian's spectrum does not spread uniformly across $[\mu,L]$. \\

Let us demonstrate this idea for $p=3,\mu = 2$ and $L=100$. Following the exact same derivation used in the last section, let us pick
\begin{align*}
	q(z,\eta)& \eqdef z^p - (\eta a(z) + b(z))
\end{align*}
numerically, so that 
\begin{align*}
q(z,\mu) &= \circpar{z-(1-\sqrt[3]{-\nu\mu)}}^3\\
q(z,L) &= \circpar{z-(1-\sqrt[3]{-\nu\mu)}}^3
\end{align*}
where 
\begin{align*}
	\nu&=-\circpar{\frac2{\sqrt[3]{ L} +\sqrt[3]{\mu} }}^3
\end{align*}
The resulting 3-CLI optimization algorithm $\cA_3$ is
\begin{align*}
	\bx^k &= C_2(X)\bx^{k-1} + C_1(X)\bx^{k-2} + C_0(X)\bx^{k-3} + N(X)b
\end{align*}
where
\begin{align*}
C_0(X) &\approx  0.1958 I_d - 0.0038 X\\
C_1(X) &\approx  -0.9850 I_d \\
C_2(X) &\approx  1.7892 I_d - 0.0351 X\\
N(X)   &\approx  -0.0389 I_d
\end{align*}
It is noteworthy that as opposed to the algorithm described in Section \ref{section:new_algo}, when employing linear coefficient matrices no knowledge regarding the eigenvectors of $A$ is required. As each eigenvalue of the second-order derivative corresponds to a bound on the convergence rate, one can verify by Figure \ref{fig:A_3} that
\begin{align*}
	\rho\circpar{\syspol{\cA_3}{\lambda,X}} \le \frac{\sqrt[3]{\kappa} - 1}{\sqrt[3]{\kappa}} 
\end{align*} 
for any $X\in\posdefun{d}{[2,100]}$ which satisfies 
\begin{align*}
	\spec{A}&\subseteq \hat{\Sigma}\eqdef [2,2+\epsilon]\cup[100-\epsilon,100],\quad\epsilon\approx 1.5
\end{align*}
Thus, $\cA_3$ outperforms AGD for this family of quadratic functions.\\

\begin{figure}
  \centering
	\includegraphics[scale=0.6,trim= 0 190 0 210,clip]{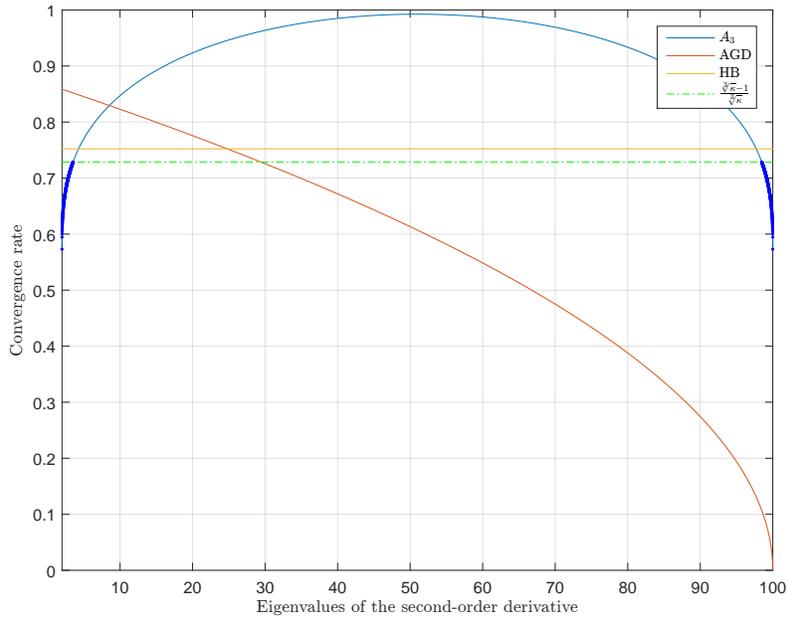}
  \caption{The convergence rate of AGD and $\cA_3$ vs. the eigenvalues of the second-order derivatives. It can be seen that the asymptotic convergence rate of $\cA_3$ for quadratic functions whose second-order derivative comprises eigenvalues which are close to the edges of $[2,100]$, is faster than AGD and goes below the theoretical lower bound for first-order optimization algorithm $\frac{\sqrt{\kappa}-1}{\sqrt{\kappa}+1}$.	  }
	\label{fig:A_3}
\end{figure}
Let us demonstrate the gain in the performance allowed by $\cA_3$ in a very simple setting. Define $A$ to be $\Diag{\mu,L}$ rotated counter-clockwise by $45^\circ$, that is
\begin{align*}
	A &= \mu \mymat{\frac{1}{\sqrt{2}}\\\frac{1}{\sqrt{2}} }
	\mymat{\frac{1}{\sqrt{2}}\\\frac{1}{\sqrt{2}} }^\top
	+
	L \mymat{\frac{1}{\sqrt{2}}\\\frac{-1}{\sqrt{2}} }
	\mymat{\frac{1}{\sqrt{2}}\\\frac{-1}{\sqrt{2}} }^\top
	= \mymat{ \frac{\mu+L}{2} & \frac{\mu-L}{2} \\ \frac{\mu-L}{2} & \frac{\mu+L}{2} }
\end{align*}
Furthermore, define  $\bb = -A \circpar{100,100}^\top$. Note that $\quadab{A,\bb}\in\posdefun{2}{\hat{\Sigma}}$ and that its minimizer is simply $\circpar{100,100}^\top$. Figure \ref{fig:err_A_3} shows the error of $\cA_3$, AGD and HB vs. iteration number. All algorithms are initialized at $\bx^0 =0$.
\begin{figure}[h]
\begin{center}
\includegraphics[scale=0.5,trim= 50 200 50 200,clip]{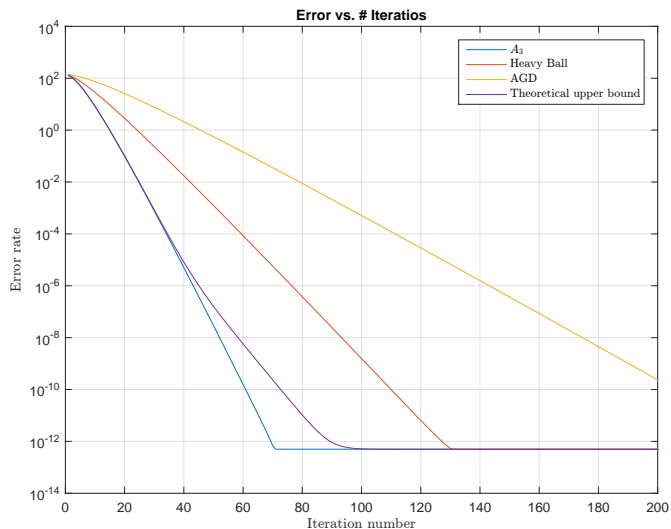}
\end{center}
  \caption{The error rate of $\cA_3$, AGD and HB vs. $\#$ iterations for solving a simple quadratic minimization task. The convergence rate of $\cA_3$ is bounded from above by $\frac{\sqrt[3]{\kappa}-1}{\sqrt[3]{\kappa}}$ as implied by theory.} 
	\label{fig:err_A_3}
\end{figure}
Since $\cA_3$ is a first-order optimization algorithm, by the lower bound shown in (\ref{ineq:sqrtlb}) there must exist some quadratic function $\quadab{A_{\text{lb}},\bb_{\text{lb}}}\in\posdefun{2}{[\mu,L]}$  such that
\begin{align} \label{iteration_complexity:general__lower_bound}
	\IC_{\cA_3}\circpar{\epsilon,\quadab{A_{\text{lb}},\bb_{\text{lb}}}}\ge \tilde{\Omega}\circpar{\sqrt{\kappa}\ln(1/\epsilon)}
\end{align}
But, since 
\begin{align}
	 \IC_{\cA_3}\circpar{\epsilon,\quadab{A,\bb}} \le \bigO{\sqrt[3]{\kappa}\ln(1/\epsilon)}  
\end{align}
for every $\quadab{A,\bb}\in\posdefun{2}{\hat{\Sigma}}$, we must have $\quadab{A_{\text{lb}},\bb_{\text{lb}}} \in \posdefun{2}{[\mu,L]} \setminus \posdefun{2}{\hat{\Sigma}}$. Indeed, in the somewhat simpler form of the general lower bound for first-order optimization algorithms, Nesterov (see \cite{nesterov2004introductory}) considers the following $1$-smooth $0$-strongly convex function\footnote{Although $\quadab{A_{\text{lb}},\bb_{\text{lb}}}$ is not strongly convex, the lower bound for strongly convex function is obtained by shifting the spectrum using a regularization term $\mu/2\norm{\bx}^2$. In which case, the shape of the spectrum is preserved.} 
\begin{align*}
	A_{\text{lb}} = \frac{1}{4} \mymat{
	2 & -1 & 0 & \dots && &0 \\
	 -1 & 2 & -1 & 0 & \dots && 0  \\
	0 & -1 & 2 & -1 & 0 & \dots & 0  \\\\
	&&&\ddots\\\\
	0&&\dots&0 &-1& 2 & -1  \\	
	0&&&\dots&0 &-1& 2   \\	
	},~\bb_{\text{lb}} = -\mymat{1\\0\\\vdots\\0}
\end{align*}
As demonstrated by Figure \ref{fig:NestSpec}, $\spec{A_{\text{lb}}}$ densely fills $[\mu,L]$.
\begin{figure}[H] 
\begin{center}
\includegraphics[scale=0.6,trim= 0 240 0 250,clip]{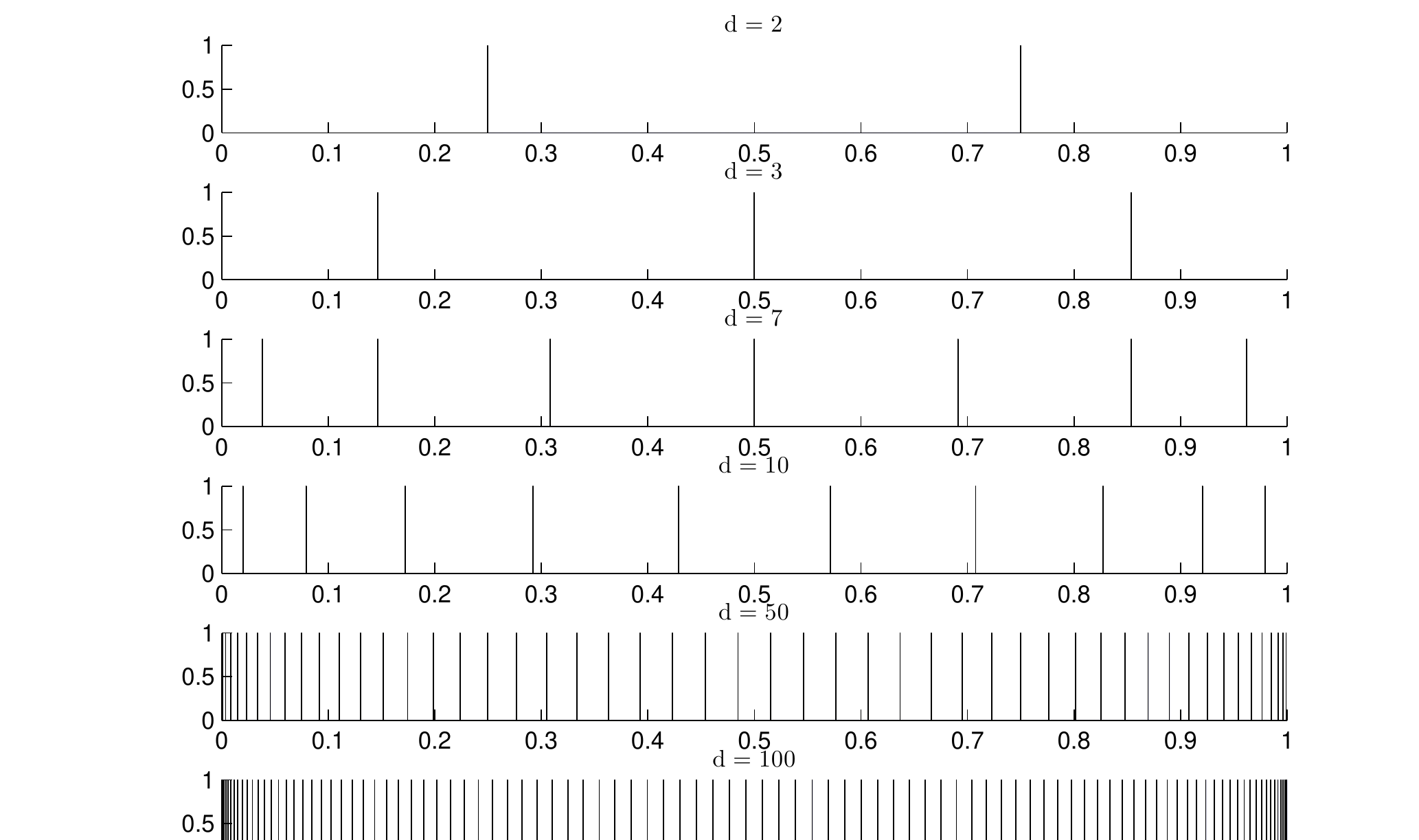}
\end{center}
  \caption{The spectrum of $A_{\text{lb}}$, as used in the derivation of Nesterov's lower bound, for problem space of various dimensions.}
	\label{fig:NestSpec}
\end{figure}

Consequently, we expect that whenever adjacent eigenvalues of the second-order derivatives are relatively distant, one should able be to minimize the corresponding quadratic function faster than the lower bound stated in \ref{ineq:sqrtlb}. This technique can be further generalized to $p>3$ using the same ideas. Also, a different approach is to use quadratic (or even higher degree) coefficient matrices to exploit other shapes of spectra. Clearly, the applicability of both approaches heavily depends the existence of spectra of this type in real applications.

\section{Proofs}

\subsection{Proof of \thmref{thm:ic_cli}} \label{subsection:conv_prop}
The simple idea behind proof of \thmref{thm:ic_cli} is to express the dynamic of a given $p$-SCLI optimization algorithm as a recurrent application of linear operator. To analyze the latter, we employ the Jordan form which allows us to bind together the maximal magnitude eigenvalue and the convergence rate. Prior to proving this theorem, we first need to introduce some elementary results in linear algebra.\\

\subsubsection{Linear Algebra Preliminaries}
We prove two basic lemmas which allow to determine under what conditions does a recurrence application of linear operators over finite dimensional spaces converge, as well as to compute the limit of matrices powers series. It is worth noting that despite of being a very elementary result in Matrix theory and in the theory of power methods, the lower bound part of the first lemma does not seem to appear in this form in standard linear algebra literature. 
\begin{lemma} \label{lem:conv_rate_jord}
Let $A$ be a $d\times d$ square matrix.
\begin{itemize}
	\item If $\rho(A)>0$ then there exists $C_A>0$  such that for any $\bu\in\reals^d$ and for any $k\in\bN$ we have  
\begin{align*}
\norm{A^k \bu} \le C_A k^{m-1} \rho(A)^k\norm{\bu} 
\end{align*}
where $m$ denotes the maximal index of eigenvalues whose modulus is maximal. \\In addition, there exists $c_A >0$ and $\br\in\reals^d$ such that for any $\bu\in\reals^d$ which satisfies $\inprod{\bu}{\br}\neq0$ we have 
\begin{align*}
\norm{A^k \bu} \ge c_A k^{m-1} \rho(A)^k\norm{\bu} 
\end{align*}
for sufficiently large $k\in\bN$.\\
\item If $\rho(A)=0$ then $A$ is a nilpotent matrix. In which case, both lower and upper bounds mentioned above hold trivially for any $\bu\in\reals^d$ for sufficiently large $k$.
\end{itemize}
\end{lemma}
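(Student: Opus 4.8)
The plan is to reduce everything to the Jordan canonical form of $A$ and read off the growth of $\norm{A^k\bu}$ from the powers of individual Jordan blocks. Write $A = PJP^{-1}$ over $\bC$, so that $A^k = PJ^kP^{-1}$ and $J$ is block diagonal with Jordan blocks $J_{\lambda_\ell}$. For a single block $J_\lambda = \lambda I + N$ of size $s$, where $N$ is the nilpotent superdiagonal shift ($N^s=0$), the binomial theorem gives $J_\lambda^k = \sum_{j=0}^{s-1}\binom{k}{j}\lambda^{k-j}N^j$, since $N$ commutes with $\lambda I$. This single identity drives both bounds: the entries of $J_\lambda^k$ are scalars $\binom{k}{j}\lambda^{k-j}$, and for fixed $\lambda\neq0$ their dominant growth as $k\to\infty$ comes from the top term $j=s-1$, of order $\binom{k}{s-1}|\lambda|^k$, i.e.\ $k^{s-1}|\lambda|^k$ up to a constant. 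Throughout, $m$ denotes the largest block size occurring among the eigenvalues of maximal modulus $\rho(A)$.

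For the upper bound I would argue blockwise. By the triangle inequality and submultiplicativity, $\norm{J_\lambda^k}\le\sum_{j=0}^{s-1}\binom{k}{j}|\lambda|^{k-j}\norm{N}^j$; using $\binom{k}{j}\le k^j$ and the fact that the sum has boundedly many terms, each block obeys $\norm{J_\lambda^k}\le c\,k^{s-1}|\lambda|^k$ for $k\ge1$, the largest contribution coming from a maximal block at a maximal-modulus eigenvalue, i.e.\ order $k^{m-1}\rho(A)^k$. Since the spectral norm of the block-diagonal $J^k$ is exactly the maximum of its block norms, $\norm{A^k}\le\norm{P}\,\norm{P^{-1}}\,\norm{J^k}\le C_A\,k^{m-1}\rho(A)^k$, and $\norm{A^k\bu}\le\norm{A^k}\norm{\bu}$ closes the first claim. (I take $\bN=\{1,2,\dots\}$ so that $k^{m-1}\ge1$; the finitely many small-$k$ discrepancies are absorbed into $C_A$.)

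The lower bound is the genuinely non-standard step, and I expect it to be \textbf{the main obstacle}. Fix an eigenvalue $\lambda_0$ with $|\lambda_0|=\rho(A)$ whose largest block has size $m$, and let $\bv_1,\dots,\bv_m$ be its Jordan chain (consecutive columns of $P$) with dual rows $\bw_1^\top,\dots,\bw_m^\top$ of $P^{-1}$. The crucial observation is that the coordinate of $A^k\bu$ along the leading eigenvector $\bv_1$ is isolated from all other blocks: $\bw_1^\top A^k\bu=\sum_{j=0}^{m-1}\binom{k}{j}\lambda_0^{k-j}(\bw_{1+j}^\top\bu)$, because $J^k$ is block diagonal so only the coordinates of $\bu$ inside this block enter. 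If $\bw_m^\top\bu\neq0$ the term $j=m-1$ dominates, giving $\absval{\bw_1^\top A^k\bu}\ge\tfrac12\binom{k}{m-1}\rho(A)^{k-(m-1)}\absval{\bw_m^\top\bu}$ for all large $k$; together with $\norm{A^k\bu}\ge\absval{\bw_1^\top A^k\bu}/\norm{\bw_1}$ this yields the required order $k^{m-1}\rho(A)^k$. No cancellation can occur precisely because distinct Jordan blocks occupy distinct coordinates, so competing maximal blocks cannot interfere in the $\bv_1$-coordinate. To obtain a real direction $\br$, I would set $\br$ to be whichever of $\operatorname{Re}\bw_m,\operatorname{Im}\bw_m$ is nonzero, so that $\inprod{\bu}{\br}\neq0$ forces $\bw_m^\top\bu\neq0$ for real $\bu$; when $\lambda_0$ is non-real, its conjugate block contributes the complex-conjugate coordinate, which only adds to the magnitude. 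The one delicate point I would flag is that the honest constant and the threshold ``$k$ large'' depend on $\bu$ through the size of $\absval{\bw_m^\top\bu}$ relative to the subdominant coefficients, the essential content being the growth order $k^{m-1}\rho(A)^k$.

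Finally, the nilpotent case is immediate: if $\rho(A)=0$ then $\spec{A}=\{0\}$, so $J$ is strictly upper triangular with $J^d=0$, whence $A^d=0$ and $A^k=0$ for every $k\ge d$. For such $k$ both inequalities reduce to $0\le0$ (their right-hand sides carry the vanishing factor $\rho(A)^k=0$), so they hold trivially for every $\bu$. In summary, the block-norm bookkeeping and the binomial asymptotics of Paragraphs two and one are routine, while the lower bound of Paragraph three—the clean extraction of the $\bv_1$-coordinate and the choice of the real functional $\br$—is where the real work lies.
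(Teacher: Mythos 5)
Your proof is correct, and it shares the paper's overall skeleton (Jordan form plus the binomial expansion of block powers; your upper bound is essentially identical to the paper's), but your lower bound takes a genuinely different and in some ways cleaner route. The paper normalizes the full vector $A^k\bu$ by $k^{m-1}\lambda_1^k$, keeps \emph{all} Jordan blocks attaining modulus $\rho(A)$ with index $m$, shows the other blocks' contributions vanish in the limit, and then bounds the norm of the surviving combination $\sum_{i}\circpar{\lambda_i/\lambda_1}^k q_i r_i^\top\bu/\lambda_i^{m-1}$ away from zero using linear independence of the leading columns $q_i$ — an asymptotic argument carried out with ``$\approx$'' steps. You instead fix a \emph{single} maximal chain and evaluate one exact linear functional, $\bw_1^\top A^k\bu=\sum_{j=0}^{m-1}\binom{k}{j}\lambda_0^{k-j}\bw_{1+j}^\top\bu$, then lower-bound $\norm{A^k\bu}\ge\absval{\bw_1^\top A^k\bu}/\norm{\bw_1}$. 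This sidesteps entirely the issue of interference between several competing maximal blocks (your observation that block-diagonality of $J^k$ isolates the chosen block's coordinates is exactly what replaces the paper's linear-independence step), and it trades the paper's limit bookkeeping for an exact identity with a one-line dominance estimate. You are also more careful than the paper on one point: the paper's proof takes $\br$ to be a row of $P^{-1}$, which need not be real when $\lambda_1\notin\reals$, whereas your choice of whichever of $\operatorname{Re}\,\bw_m$, $\operatorname{Im}\,\bw_m$ is nonzero genuinely produces $\br\in\reals^d$ with $\inprod{\bu}{\br}\neq0\Rightarrow\bw_m^\top\bu\neq0$ for real $\bu$. Finally, the caveat you flag — that the constant and the threshold ``$k$ large'' depend on $\bu$ through $\absval{\bw_m^\top\bu}/\norm{\bu}$ — is equally present in the paper's argument (its $c_A$ arises from $r_1^\top\bu$), so your proof loses nothing relative to the original on this front.
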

\begin{proof} \label{proof:conv_rate_jord}
Let $P$ be a $d\times d $ invertible matrix such that 
\begin{align*}
P^{-1} A P =J
\end{align*}
where $J$ is a Jordan form of $A$, namely, $J$ is a block-diagonal matrix such that $J=\oplus_{i=1}^s J_{k_i}(\lambda_i) $ where $\lambda_1,\lambda_2,\dots, \lambda_s$ are eigenvalues of $A$, whose indices are $k_1,\dots,k_s$, respectively.  w.l.o.g we may assume that   $\absval{\lambda_1}=\rho(A)$ and that the corresponding index, which we denote by $m$, is maximal over all eigenvalues of maximal magnitude.
Let  $Q_1,Q_2,\cdots,Q_s$ and $R_1,R_2,\cdots,R_s$ denote partitioning of the columns of $P$ and the rows of $P^{-1}$, respectively, which conform with the Jordan blocks of $A$. \\ 
Note that for all $i\in[d]$, $J_{k_i}(0)$ is a nilpotent matrix of an order $k_i$. Therefore, for any $(\lambda_i,k_i)$ we have 
\begin{align*}
J_{k_i}(\lambda_i)^k &= (\lambda_i I_{k_i} + J_{k_i}(0) )^k  \\
&= \sum_{j=0}^k \binom{k}{j} \lambda_i^{k-j} J_{k_i}(0)^j\\
&= \sum_{j=0}^{k_i-1} \binom{k}{j} \lambda_i^{k-j} J_{k_i}(0)^j
\end{align*}
Thus,
\begin{align}
J_{k_i}(\lambda_i)^k/ (k^{m-1} \lambda_1^k )  
&= \sum_{j=0}^{k_i-1} \frac{\binom{k}{j} \lambda_i^{k-j} J_{k_i}(0)^j}{k^{m-1} \lambda_1^k} \nonumber\\
&= \sum_{j=0}^{k_i-1} \frac{\binom{k}{j} } {k^{m-1} }
\circpar{\frac{\lambda_i}{\lambda_1}}^k
 \frac{J_{k_i}(0)^j }{\lambda_i^j} \label{eq:jord1}
\end{align}
\\
The rest of the proof pivots around the following equality which holds for any $\bu\in\reals^{pd}$,
\begin{align}
\norm{A^k \bu } 
&= \norm{P J^k P^{-1}\bu} \nonumber \\
&= \norm{\sum_{i=1}^s Q_i J_{k_i}(\lambda_i)^k R_i \bu}  \nonumber\\
&= k^{m-1} \rho(A)^k \norm{\sum_{i=1}^s Q_i\circpar{ J_{k_i}(\lambda_i)/(k^{m-1} \lambda_1^k) }R_i \bu}
\end{align}
Plugging in \ref{eq:jord1} yields, 
\begin{align}
\norm{A^k \bu } 
&= k^{m-1} \rho(A)^k \norm{\underbrace{\sum_{i=1}^s Q_i\circpar{\sum_{j=0}^{k_i-1} \frac{\binom{k}{j} } {k^{m-1} }
\circpar{\frac{\lambda_i}{\lambda_1}}^k
 \frac{J_{k_i}(0)^j }{\lambda_i^j} }R_i \bu}_{\bw_k} } \label{eq:jord_main}
\end{align}
Let us denote the sequence of vectors in the l.h.s of the preceding inequality by $\{\bw_k\}_{k=1}^\infty$. Showing that the norm of $\{\bw_k\}_{k=1}^\infty$ is bounded from above and away from zero will conclude the proof. Deriving an upper bound is straightforward.
\begin{align}
\norm{\bw_k} &\le 
\sum_{i=1}^s  \norm{Q_i\circpar{\sum_{j=0}^{k_i-1} \frac{\binom{k}{j} } {k^{m-1} }
\circpar{\frac{\lambda_i}{\lambda_1}}^k
 \frac{J_{k_i}(0)^j }{\lambda_i^j} }R_i \bu}\nonumber \\
 &\le \norm{ \bu}
\sum_{i=1}^s  \norm{Q_i}\norm{R_i} \sum_{j=0}^{k_i-1}\norm{ \frac{\binom{k}{j} } {k^{m-1} }
\circpar{\frac{\lambda_i}{\lambda_1}}^k
 \frac{J_{k_i}(0)^j }{\lambda_i^j }} \label{eq:tmp2}
\end{align}
Since for all $i\in [d]$ we have
\begin{align*}
\frac{\binom{k}{j} } {k^{m-1} }
\circpar{\frac{\lambda_i}{\lambda_1}}^k \to 0 \quad \text{ or } \quad \frac{\binom{k}{j} } {k^{m-1} }
\circpar{\frac{\lambda_i}{\lambda_1}}^k \to1
\end{align*}
it holds that \ineqref{eq:tmp2} can be bounded from above by some positive scalar $C_A$. Plugging it in into \ref{eq:jord_main} yields
\begin{align*}
\norm{A^k \bu } \le C_A k^{m-1} \rho(A)^k \norm{\bu}
\end{align*}
\\
Deriving a lower bound on the norm of $\{\bw_k\}$ is a bit more involved. First, we define the following set of Jordan blocks which govern the asymptotic behavior of $\norm{\bw_k}$ 
\begin{align*}
\cI \eqdef  \myset{ i\in[s] }{ \absval{\lambda_i}=\rho(A) \text{ and }  k_i = m }
\end{align*}
\eqref{eq:jord1} implies that for all $i\notin\cI$ 
\begin{align*}
J_{k_i}(\lambda_i)^k / (k^{m-1} \lambda_1^k ) \to 0  \text{ as } k\to \infty. 
\end{align*}
As for $i\in\cI$, the first $k_i-1$ terms in \eqref{eq:jord1} tend to zero. The last term is a matrix whose entries are all zeros, except for the last entry in the first row which equals
\begin{align*}
\frac{\binom{k}{m-1} } {k^{m-1} }
\circpar{\frac{\lambda_i}{\lambda_1}}^k
1/(\lambda_i^{m-1}) 
&\approx
\circpar{\frac{\lambda_i}{\lambda_1}}^k
1/(\lambda_i^{m-1}) 
\end{align*}
By denoting the first column of each $Q_i$ by $q_i$ and the last row in each $R_i$ by $r_i^\top$, we get
\begin{align*}
\norm{\bw_k} &\approx 
\norm{\sum_{i\in\cI} \circpar{\frac{\lambda_i}{\lambda_1}}^k
\frac{1}{\lambda_i^{m-1}} Q_i J_{m}(0)^{m-1} R_i \bu}\\
&=
\norm{\sum_{i\in\cI} \circpar{\frac{\lambda_i}{\lambda_1}}^k
\frac{q_i r_i^\top  \bu}{\lambda_i^{m-1}}   }\\
\end{align*}
Now, if $\bu$ satisfies $r_1^\top\bu\neq0$ then since $q_1,q_2,\cdots,q_{|\cI|}$ are linearly independent, we see that the preceding can be  bounded from below by some positive constant $c_A>0$ which does not depend on $k$. That is, there exists $c_A>0$ such that $\norm{\bw_k}>c_A$ for sufficiently large $k$. Plugging it in into \eqref{eq:jord_main}  yields
\begin{align*}
\norm{A\bu} \ge c_A k^{m-1} \rho(A)^k\norm{\bu} 
\end{align*}
for any $\bu\in\reals^d$ such that $\inprod{\bu}{\br_1}\neq0$ and for sufficiently large $k$.
\end{proof}

The following is a well-known fact regarding \emph{Neuman series}, sum of powers of square matrices, which follows easily from \lemref{lem:conv_rate_jord}. 
\begin{lemma} \label{lem:conv_equi}
Suppose $A$ is a square matrix. Then, the following statements are equivalent:
\begin{enumerate}
\item $\rho(A)<1$. \label{cor:conv_equi_1}
\item $\lim_{k\to\infty}A^k=0$. \label{cor:conv_equi_2}
\item $\sum_{k=0}^\infty A^k$ converges. \label{cor:conv_equi_3}
\end{enumerate}
In which case, $(I-A)^{-1}$ exists and $(I-A)^{-1}=\sum_{k=0}^\infty A^k$.
\end{lemma}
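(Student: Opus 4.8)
The plan is to prove the three-way equivalence as a cycle $1\Rightarrow3\Rightarrow2\Rightarrow1$ and then read off the closed form for $(I-A)^{-1}$, leaning throughout on \lemref{lem:conv_rate_jord}, which already encodes the precise growth/decay of $\norm{A^k\bu}$. The first thing I would observe is that the upper bound in \lemref{lem:conv_rate_jord} holds for every $\bu$ with the \emph{same} constant $C_A$ and exponent $m$, so taking the supremum over unit vectors upgrades it to an operator-norm bound $\norm{A^k}\le C_A k^{m-1}\rho(A)^k$. (If $\rho(A)=0$ then $A$ is nilpotent and $A^k$ vanishes for large $k$, so every claim below is trivial; I would dispose of this branch at the outset and assume $\rho(A)>0$ thereafter.)

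For $1\Rightarrow3$, assume $\rho(A)<1$. Then the scalar series $\sum_{k\ge0} k^{m-1}\rho(A)^k$ converges by the ratio test, so for the partial sums $S_N=\sum_{k=0}^N A^k$ one has $\norm{S_N-S_M}\le\sum_{k=M+1}^{N}\norm{A^k}\le C_A\sum_{k=M+1}^{N}k^{m-1}\rho(A)^k$, whose right-hand side is the tail of a convergent series and hence tends to $0$. Thus $\{S_N\}$ is Cauchy in the operator norm, and completeness of $\reals^{d\times d}$ gives convergence of $\sum_{k=0}^\infty A^k$. The implication $3\Rightarrow2$ is the easy direction: convergence of any series forces its general term to zero, so $A^k\to0$.

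For $2\Rightarrow1$ I would argue by contraposition. Suppose $\rho(A)\ge1$; then $\rho(A)>0$, so the lower-bound half of \lemref{lem:conv_rate_jord} supplies $c_A>0$ and a vector $\br$ such that $\norm{A^k\bu}\ge c_A k^{m-1}\rho(A)^k\norm{\bu}$ for every $\bu$ with $\inprod{\bu}{\br}\neq0$ and all sufficiently large $k$. Choosing such a $\bu$ and using $\rho(A)\ge1$ together with $m\ge1$, the right-hand side is bounded below by $c_A\norm{\bu}>0$, so $A^k\bu\not\to0$ and therefore $A^k\not\to0$; this contradicts statement $2$, completing the cycle.

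Finally, for the closed form, assume $\rho(A)<1$, so $1\notin\spec{A}$ and $I-A$ is invertible. The telescoping identity $(I-A)\sum_{k=0}^N A^k=I-A^{N+1}$ (and its right-multiplied analogue) combined with $A^{N+1}\to0$ and the already-established convergence of the series yields $(I-A)\sum_{k=0}^\infty A^k=I=\bigl(\sum_{k=0}^\infty A^k\bigr)(I-A)$, hence $(I-A)^{-1}=\sum_{k=0}^\infty A^k$. The only genuinely delicate point is the opening move — passing from the vectorwise estimates of \lemref{lem:conv_rate_jord} to an honest operator-norm bound and correctly isolating the nilpotent $\rho(A)=0$ case; once that is done, each implication is a short and routine estimate.
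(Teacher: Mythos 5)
Your proposal is correct and takes essentially the same route as the paper's proof: the equivalence of $\rho(A)<1$ and $A^k\to 0$ rests on \lemref{lem:conv_rate_jord} (the uniform upper bound in one direction, the lower-bound half by contraposition in the other), and the telescoping identity $(I-A)\sum_{k=0}^{m-1}A^k = I-A^{m}$ delivers the series statement and the closed form $(I-A)^{-1}=\sum_{k=0}^{\infty}A^k$. Your only deviations are organizational—proving $1\Rightarrow 3$ directly by summing the operator-norm bound (after correctly upgrading the vectorwise estimate, whose constant $C_A$ is indeed uniform in $\bu$) and isolating the nilpotent case—which merely fill in details the paper's terse proof leaves implicit.
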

\begin{proof}
First, note that all norms on a finite-dimensional space are equivalent. Thus, the claims stated in (\ref{cor:conv_equi_2}) and (\ref{cor:conv_equi_3}) are well-defined.\\
The fact that (\ref{cor:conv_equi_1}) and (\ref{cor:conv_equi_2}) are equivalent is a direct implication of \lemref{lem:conv_rate_jord}. Finally, the equivalence of (\ref{cor:conv_equi_2}) and (\ref{cor:conv_equi_3}) may be established using the following identity
\begin{align*}
(I-A)\sum_{k=0}^{m-1} A^k = I-A^{m},\quad m\in\bN
\end{align*}
\end{proof}

\subsubsection{Convergence Properties}
Let us now analyze the convergence properties of $p$-SCLI optimization algorithms. First, note that update rule (\ref{def:pscli_update_rule}) can be equivalently expressed as a single step rule by introducing new variables in some possibly higher-dimensional Euclidean space $\reals^{pd}$,
\begin{align}\label{Eq:canonical_dynamic}
\bz^0 =\circpar{\bx^0,\bx^1,\dots,\bx^{p-1}}^\top \in \reals^{pd}, \quad \bz^k =M(X)\bz^{k-1}+U N(X)\bb,\quad k=1,2,\dots
\end{align}
where 
\begin{align} \label{def:u}
U\eqdef  (\underbrace{0_d,\dots,0_d}_{p-1 \text{ times}}, I_d)^\top\in \reals^{pd\times d}
\end{align}
and where $M(X)$ is a mapping from $\reals^{d\times d}$ to $\reals^{pd\times pd}$-valued random variables which admits the following generalized form of companion matrices
\begin{align} \label{def:pcli}
\mymat{0_d & I_d  \\&0_d & I_d \\ && \ddots&\ddots\\&&& 0_d & I_d\\C_{0}(X)&&\dots&C_{p-2}(X)&C_{p-1}(X) }
\end{align} 
Following the convention in the field of linear iterative methods, we call $M(X)$ the \emph{iteration matrix}. Note that in terms of the formulation given in (\ref{Eq:canonical_dynamic}), consistency w.r.t $A\in\posdef{d}{\Sigma}$ is equivalent to 
\begin{align} \label{eq:conv_sol_z}
\bE \bz^k \to \underbrace{\circpar{-A^{-1}\bb,\dots,-A^{-1}\bb}^\top}_{p \text{ times}}
\end{align}
regardless of the initialization points and for any $\bb\in\reals^d$ and $\bz^0\in\reals^{pd}$. \\

To improve readability, we shall omit the functional dependency of the iteration, inversion and coefficient matrices on $X$ in the following discussion. Furthermore, \eqref{Eq:canonical_dynamic} can be used to derive a simple expression of $\bz^k$, in terms of previous iterations as follows
\begin{align*}
\bz^1 &= M^{(0)} \bz^0 + U N^{(0)} \bb\\
\bz^2 &= M^{(1)} \bz^1 + U N^{(1)} \bb =M^{(1)} M^{(0)} \bz^0 + M^{(1)}U N^{(0)} \bb+ U N^{(1)} \bb \\
\bz^3 &= M^{(2)} \bz^2 + U N^{(2)} \bb =M^{(2)}M^{(1)} M^{(0)} \bz^0 + M^{(2)}M^{(1)}U N^{(0)} \bb+ M^{(2)}U N^{(1)} \bb +U N^{(2)} \bb\\
\vdots\\
\bz^k &=  \prod_{j=0}^{k-1} M^{(j)}\bz^0 + \sum_{m=1}^{k-1} \prod_{j=m}^{k-1} M^{(j)}U N^{(m-1)} \bb  + U N^{(k-1)} \bb\\
&=  \prod_{j=0}^{k-1} M^{(j)}\bz^0 + \sum_{m=1}^{k} \circpar{ \prod_{j=m}^{k-1} M^{(j)} } U N^{(m-1)} \bb 
\end{align*}
where $\circpar{M^{(0)},N^{(0)}},\dots,\circpar{M^{(k-1)},N^{(k-1)}}$  are $k$ i.i.d realizations of the corresponding iteration matrix and inversion matrix, respectively. We follow the convention of defining an empty product as the identity matrix and defining the multiplication order of factors of abbreviated product notation as multiplication from the highest index to the lowest, i.e., $\prod_{j=1}^k M^{(j)} = M^{(k)}\cdots  M^{(1)} $. Taking the expectation of both sides yields
\begin{align}
\bE \bz^k &= \bE [M]^k \bz^0+ \circpar{\sum_{j=0}^{k-1} \bE [M]^j}  \bE [ U  N \bb] \label{EqLine:e25}
\end{align}
By \lemref{lem:conv_equi}, if $\rho(\bE M)<1$ then the first term in the r.h.s of \eqref{EqLine:e25} vanishes for any initialization point $\bz^0$, whereas the second term converges to $$(I-\bE M)^{-1}  \bE [ U   N \bb]$$ the fixed point of the update rule. On the other hand, suppose that $\circpar{\bE \bz^k}_{k=0}^\infty$ converges for any $\bz^0\in\reals^d$. Then, this is also true for $\bz^0=0$. Thus, the second summand in the r.h.s of \eqref{EqLine:e25} must converge. Consequently, the sequence $\bE[M]^k \bz^0$, being a difference of two convergent sequences, converges for all $\bz^0$, which implies $\rho(\bE [M])<1$.  This proves the following theorem.

\begin{theorem} \label{thm:conv_cli}
With the notation above, $\circpar{\bE \bz^{k}}_{k=0}^\infty$ converges for any $\bz^0\in\reals^d $ if and only if $\rho(\bE[ M])<1$. In which case, for any initialization point $\bz^0\in\reals^d$, the limit is 
\begin{align} \label{eq:limit_pscli}
	\bz^*  \eqdef \circpar{I-\bE M}^{-1}\bE [ U   N \bb]
\end{align} 
\end{theorem}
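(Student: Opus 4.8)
The plan is to obtain both implications directly from the closed form for $\bE\bz^k$ derived immediately above the statement, namely
\begin{align*}
\bE \bz^k = \bE[M]^k \bz^0 + \circpar{\sum_{j=0}^{k-1} \bE[M]^j} \bE[U N \bb],
\end{align*}
which splits the trajectory into a homogeneous part $\bE[M]^k\bz^0$ carrying the dependence on the initialization and a particular part built from the Neumann-type partial sums $\sum_{j=0}^{k-1}\bE[M]^j$. With this decomposition in hand the theorem becomes bookkeeping on top of the two linear-algebra lemmas already proved: \lemref{lem:conv_rate_jord} governs the homogeneous part, while \lemref{lem:conv_equi} governs the particular part and pins down the limit.

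For the implication $\rho(\bE M)<1\Rightarrow$ convergence, I would invoke \lemref{lem:conv_equi} twice. The equivalence of \ref{cor:conv_equi_1} and \ref{cor:conv_equi_2} gives $\bE[M]^k\to 0$, so the homogeneous term tends to $\bzero$ for every $\bz^0\in\reals^{pd}$. The equivalence of \ref{cor:conv_equi_1} and \ref{cor:conv_equi_3} gives that the operator series $\sum_{j=0}^\infty \bE[M]^j$ converges to $(I-\bE M)^{-1}$; evaluating this convergent operator at the fixed vector $\bE[U N \bb]$ shows the particular term converges to $(I-\bE M)^{-1}\bE[U N \bb]=\bz^*$. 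Adding the two limits yields $\bE\bz^k\to\bz^*$ regardless of $\bz^0$, which proves convergence and simultaneously identifies the limit as claimed.

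For the converse I would use that convergence is postulated for \emph{every} $\bz^0$. Setting $\bz^0=\bzero$ annihilates the homogeneous term and shows the particular sum converges by itself; subtracting this from the trajectory for a general $\bz^0$ shows $\bE[M]^k\bz^0$ converges for all $\bz^0$, so (testing against a basis) $\bE[M]^k$ converges as a matrix sequence and is in particular bounded. To upgrade boundedness to the strict inequality $\rho(\bE M)<1$ I would appeal to the lower bound of \lemref{lem:conv_rate_jord}: there is $c>0$ and a vector $\br$ with $\norm{\bE[M]^k\bu}\ge c\,k^{m-1}\rho(\bE M)^k\norm{\bu}$ for every $\bu$ with $\inprod{\bu}{\br}\neq 0$, where $m$ is the maximal index among eigenvalues of maximal modulus. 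If $\rho(\bE M)>1$, or if $\rho(\bE M)=1$ with $m\ge 2$, the right-hand side diverges, contradicting boundedness; and a unimodular eigenvalue $\lambda\neq 1$ forces $\bE[M]^k\bu$ to rotate on a real invariant plane and hence fail to converge. Once $\rho(\bE M)<1$ is secured, $(I-\bE M)$ is invertible and the limit formula is supplied by the forward direction.

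I expect the genuinely delicate step to be this last one. The magnitude estimate of \lemref{lem:conv_rate_jord} excludes $\rho(\bE M)>1$ and any Jordan block of size $\ge 2$ on the unit circle, and the rotation argument excludes unimodular eigenvalues other than $1$; what size alone cannot exclude is a \emph{semisimple} eigenvalue exactly at $\lambda=1$, for which $\bE[M]^k$ converges to a nonzero idempotent rather than blowing up. This is precisely the degenerate regime in which $I-\bE M$ is singular and the target $\bz^*$ is undefined, so the clean ``if and only if'' really lives in the non-degenerate setting; I would dispatch it by using the convergence of the particular sum $\sum_j \bE[M]^j\bE[U N\bb]$ (which forces its summand to tend to $\bzero$) together with the fact that the iteration is required to possess a genuine fixed point $\bz^*$, equivalently convergence across the family of right-hand sides $\bb$, which rules $\lambda=1$ out of $\spec{\bE M}$ and hence yields $\rho(\bE M)<1$.
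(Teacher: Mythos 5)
Your forward direction coincides with the paper's: the same closed-form decomposition $\bE\bz^k = \bE[M]^k\bz^0 + \circpar{\sum_{j=0}^{k-1}\bE[M]^j}\bE[UN\bb]$, with \lemref{lem:conv_equi} handling both the homogeneous term and the Neumann series. Where you depart is the converse, and your extra care there is justified. The paper dispatches it in one line: taking $\bz^0=\bzero$ shows the particular sum converges, hence $\bE[M]^k\bz^0$ converges for every $\bz^0$, ``which implies $\rho(\bE[M])<1$.'' Taken literally, that final implication does not follow from \lemref{lem:conv_equi}, which equates $\rho<1$ with convergence of the powers \emph{to zero}; convergence of $\bE[M]^k\bz^0$ to a possibly nonzero limit only yields $\rho(\bE M)\le 1$ with every unimodular eigenvalue semisimple. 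You decompose the obstruction correctly: the lower bound of \lemref{lem:conv_rate_jord} excludes $\rho>1$ and Jordan blocks of size $\ge 2$ on the unit circle, the rotation argument excludes unimodular $\lambda\neq 1$, and the stubborn residual case is a semisimple eigenvalue at $\lambda=1$. That case is genuine: for $p=1$, $C_0(A)=I$, $N(A)=0$ one has $\bE\bz^k=\bz^0$ for every initialization and every $\bb$, so the sequence converges while $\rho(\bE M)=1$ and $I-\bE M$ is singular. The ``only if'' therefore cannot be closed from the theorem's literal hypotheses alone, and the paper's proof silently steps over exactly the gap you flagged.

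Your proposed patch is the right idea but, as you note, imports data not in the statement: it needs convergence across all right-hand sides $\bb$ and (effectively) invertibility of $N$. It can be made rigorous through the companion structure of $M$: if $v^\top \bE M = v^\top$ with block components $v=(v_1,\dots,v_p)$, the block equations $v_p^\top \bE C_0=v_1^\top$ and $v_{j-1}^\top + v_p^\top \bE C_{j-1} = v_j^\top$ force $v_p\neq\bzero$ (otherwise $v=\bzero$), i.e.\ $v^\top U\neq \bzero$. With $N$ invertible, choose $\bb$ so that $v^\top U N\bb\neq 0$; then $v^\top \bE[M]^j UN\bb$ is constant and nonzero, so the particular series diverges for that $\bb$, ruling $\lambda=1$ out of $\spec{\bE M}$. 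These extra hypotheses are precisely what is available where the theorem is applied (in \thmref{thm:conv_correct} consistency is demanded for all $\bb$, and $N$ is shown invertible there), so your more careful converse proves what the paper actually uses, while exposing that the statement's ``if and only if'' holds only in that non-degenerate setting.
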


We now address the more delicate question as to how fast do $p$-SCLIs converge. To this end, note that by \eqref{EqLine:e25} and \thmref{thm:conv_cli} we have
\begin{align} \label{eq:conv_eq2}
\bE \left[\bz^k - \bz^*\right] \nonumber 
&= \bE [M]^k \bz^0+ \circpar{\sum_{l=0}^{k-1} \bE [M]^i}  \bE [ U   N \bb] -  \circpar{I-\bE M}^{-1}\bE [ U   N \bb]  \nonumber\\
&= \bE [M]^k \bz^0+   \circpar{I-\bE M}^{-1} \circpar{\circpar{I-\bE M}\sum_{l=0}^{k-1} \bE [M]^i - I }  \bE [ U   N \bb]  \nonumber\\
&= \bE [M]^k \bz^0-   \circpar{I-\bE M}^{-1} (\bE M)^k  \bE [ U   N \bb]   \nonumber \\
&= \bE [M]^k (\bz^0-   \bz^*  ) 
\end{align} 
Hence, to obtain a full characterization of the convergence rate of $\norm{\bE\rectpar{\bz^k-\bz^*}}$ in terms of $\rho(\bE M)$, all we need is to simply apply \lemref{lem:conv_rate_jord} with $\bE M$.

\subsubsection{Proof}
We are now in position to prove \thmref{thm:ic_cli}. Let  $\cA\eqdef(\syspola,N(X))$ be a $p$-SCLI algorithm over $\reals^d$, let $M(X)$ denote its iteration matrix and let $\quadab{A,\bb}$ be some quadratic function. According to the previous discussion, there exist $m\in\bN$ and $C(A),c(A)>0$ such that the following hold:
\begin{enumerate}
	\item For any initialization point $\bz^0\in\reals^{pd}$, we have that $(\bE \bz^k)_{k=1}^\infty$ converges to 
	\begin{align} \label{eq:conv_prop_pscli_limit}
	  \bz^*\eqdef \circpar{I-\bE M(A)}^{-1} \bE \left[ UN(A)\bb\right] 
\end{align}
\item For any initialization point $\bz^0\in\reals^{pd}$  and for any $h\in\bN$,
\begin{align} \label{eq:conv_prop_pscli_ub}
	\norm{\bE\rectpar{ \bz^k - \bz^*}} \le C_A k^{m-1} \rho(M(A))^k\norm{\bz^0-\bz^*} 
\end{align}
\item There exists $\br\in\reals^{pd}$ such that for any initialization point $\bz^0\in\reals^{pd}$ which satisfies $\inprod{\bz^0-\bz^*}{\br}\neq0 $ and sufficiently large  $k\in\bN$,
\begin{align} \label{eq:conv_prop_pscli_lb}
	\norm{\bE\rectpar{ \bz^k - \bz^*}} \ge c_A k^{m-1} \rho(M(A))^k\norm{\bz^0-\bz^*} 
\end{align}

\end{enumerate}
Since iteration complexity is defined over the problem space, we need to derive the same inequalities in terms of 
$$\bx^k= U^\top \bz^k$$
Note that by linearity we have $\bx^*= U^\top \bz^*$. For bounding $(\bx_k)_{k=1}^\infty$ from above we use (\ref{eq:conv_prop_pscli_ub}),
\begin{align}
\norm{\bE \left[\bx^k-\bx^*\right]} &=
\norm{\bE \left[U^\top\bz^k- U^\top\bz^*\right]} \nonumber \\
&\le \norm{U^\top} \norm{\bE \left[\bz^k-\bz^*\right]} \nonumber\\
&\le \norm{U^\top}C_A k^{m-1} \rho(M)^k\norm{\bz^0-\bz^*} \nonumber\\
&= \norm{U^\top}C_A k^{m-1} \rho(M)^k\norm{U\bx^0-U\bx^*} \nonumber\\
&\le \norm{U^\top}\norm{U}C_A k^{m-1} \rho(M)^k\norm{\bx^0-\bx^*} \label{ineq:x_up}
\end{align}
Thus, the same rate as in (\ref{eq:conv_prop_pscli_ub}), with a different constant, holds in the problem space. Although the corresponding lower bound takes a slightly different form, it proof is done similarly. Pick $\bx^0,\bx^1,\dots,\bx^{p-1}$ such that the corresponding $\bz^0$ is satisfied the condition in (\ref{eq:conv_prop_pscli_lb}). 
For sufficiently large $k\in\bN$, it holds that 
\begin{align} \label{ineq:x_lb}
\max_{k=0,\dots,p-1} \norm{\bE\bx^{k+j} - \bE\bx^*} 
&\ge \frac{1}{\sqrt{p}}\sqrt{\sum_{j=0}^{p-1} \normsq{\bx^{k+j} - \bx^*} }\nonumber \\
&=\frac{1}{\sqrt{p}} \norm{\bE \left[\bz^k\right]-\bz^*} \nonumber \\
&\ge \frac{c_A}{\sqrt{p}} k^{m-1} \rho(M)^k\norm{\bz^0-\bz^*} \nonumber\\
&= \frac{c_A}{\sqrt{p}} k^{m-1}\rho(M)^k\sqrt{\sum_{j=0}^{p-1} \normsq{\bx^{j} - \bx^*}}  
\end{align}

We arrived at the following corollary which states that the asymptotic convergence rate of any $p$-SCLI optimization algorithm is governed by the spectral radius of its iteration matrix. 
\begin{theorem} \label{thm:conv_rate_ulb}
Suppose $\cA$ is a $p$-SCLI optimization algorithm over $\posdefun{d}{\Sigma}$ and let $M(X)$ denotes its iteration matrix. Then, there exists $m\in\bN$ such that for any quadratic function $\quadab{A,\bb}\in\posdefun{d}{\Sigma}$ it holds that 
\begin{align*}
\norm{\bE \left[\bx^k-\bx^*\right]} = \bigO{k^{m-1}\rho(M(X))^k\norm{\bx^0-\bx^*}}
\end{align*}
where $\bx^*$ denotes the minimizer of $\quadab{A,b}$. Furthermore, there exists an initialization point $\bx^0\in\reals^d$, such that 
\begin{align*}
\max_{k=0,\dots,p-1} \norm{\bE\bx^{k+j} - \bE\bx^*}  = \Omega\circpar{\frac{k^{m-1}}{\sqrt{p}}\rho(M(X))^k\norm{\bx^0-\bx^*}}
\end{align*}
\end{theorem}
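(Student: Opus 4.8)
The plan is to prove both halves by first passing to the one-step companion form in $\reals^{pd}$, applying the sharp two-sided Jordan estimate of \lemref{lem:conv_rate_jord} to the expected iteration matrix $\bE M(A)$, and then transferring the resulting bounds back to the problem space $\reals^d$ through the projection $\bx^k = U^\top \bz^k$.

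First I would recall the canonical dynamics \eqref{Eq:canonical_dynamic}, in which the $p$-step recurrence \eqref{def:pscli_update_rule} becomes $\bz^k = M(X)\bz^{k-1} + U N(X)\bb$ with $M(X)$ the generalized companion matrix \eqref{def:pcli}. Taking expectations and telescoping gives \eqref{EqLine:e25}, and combining this with the fixed point $\bz^* = (I-\bE M)^{-1}\bE[UN\bb]$ from \thmref{thm:conv_cli} collapses the error to the clean recursion $\bE[\bz^k - \bz^*] = (\bE M)^k(\bz^0 - \bz^*)$, exactly \eqref{eq:conv_eq2}; the only algebraic input is the Neumann identity $(I-\bE M)^{-1}\bigl((I-\bE M)\sum_{l=0}^{k-1}(\bE M)^l - I\bigr) = -(\bE M)^k$, legitimate precisely because consistency forces $\rho(\bE M)<1$ (\lemref{lem:conv_equi}). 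At this point I would apply \lemref{lem:conv_rate_jord} to $\bE M(A)$ with $\bu = \bz^0 - \bz^*$, letting $m$ be the maximal index over eigenvalues of maximal modulus (so $m\le pd$). This yields simultaneously an upper bound $\norm{\bE[\bz^k-\bz^*]} \le C_A k^{m-1}\rho(M)^k\norm{\bz^0-\bz^*}$ for every $\bz^0$, and a matching lower bound $\norm{\bE[\bz^k-\bz^*]} \ge c_A k^{m-1}\rho(M)^k\norm{\bz^0-\bz^*}$ for every $\bz^0$ whose offset is not orthogonal to the distinguished vector $\br$ furnished by the lemma. Here $\rho(\bE M)=\rho(M(X))$ is the root radius of the characteristic polynomial, since $M$ is companion to $\cL(\lambda,A)$.

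For the $\bigO$ statement I would simply set $\bx^k = U^\top\bz^k$, $\bx^* = U^\top\bz^*$, and chain $\norm{\bE[\bx^k-\bx^*]} \le \norm{U^\top}\norm{\bE[\bz^k-\bz^*]}$ with $\norm{\bz^0-\bz^*} = \norm{U\bx^0 - U\bx^*} \le \norm{U}\norm{\bx^0-\bx^*}$, absorbing $\norm{U^\top}\norm{U}C_A$ into the constant, which reproduces \eqref{ineq:x_up}. The delicate half is the $\Omega$ claim, and this is where I expect the main obstacle: the projection $U^\top$ retains only the last block of $\bz^k$, so a lower bound on a single $\bx^k$ could be vacuous if the dominant Jordan direction vanishes in that block. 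The fix is to compare $\norm{\bz^k-\bz^*}$ against the entire window of $p$ consecutive iterates it stacks, giving $\max_{j=0,\dots,p-1}\norm{\bE\bx^{k+j}-\bE\bx^*} \ge \tfrac{1}{\sqrt p}\norm{\bE[\bz^k]-\bz^*}$, and then invoking the lower half of \lemref{lem:conv_rate_jord}, exactly as in \eqref{ineq:x_lb}.

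Finally I would settle feasibility of the initialization: the lower bound needs an offset $\bz^0-\bz^*$ with $\inprod{\bz^0-\bz^*}{\br}\neq 0$, and since $\{\bv : \inprod{\bv}{\br}=0\}$ is a proper hyperplane of $\reals^{pd}$, such a starting window $(\bx^0,\dots,\bx^{p-1})$ certainly exists (indeed almost every one works). Choosing any such window and reading off $\bx^0$ completes the existence claim, so the two estimates together give the theorem. I expect everything apart from this projection step to be routine bookkeeping; the only genuine care is arguing that the maximal-modulus Jordan content is not annihilated when the $pd$-dimensional state is collapsed back to the $d$-dimensional iterate, which the windowed comparison handles cleanly.
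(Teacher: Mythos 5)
Your proposal is correct and follows essentially the same route as the paper's own proof: the companion-form reduction \eqref{Eq:canonical_dynamic}, the collapse of the error to $\bE[\bz^k-\bz^*]=(\bE M)^k(\bz^0-\bz^*)$ as in \eqref{eq:conv_eq2}, the two-sided Jordan estimate of \lemref{lem:conv_rate_jord} applied to $\bE M(A)$, and the transfer back to $\reals^d$ via the chaining in \ineqref{ineq:x_up} together with the $\tfrac{1}{\sqrt{p}}$-windowed comparison of \ineqref{ineq:x_lb}. The only cosmetic difference is that you spell out the genericity of admissible initializations (the complement of the hyperplane $\inprod{\cdot}{\br}=0$ in $\reals^{pd}$), which the paper leaves implicit.
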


Finally, in the next section we prove that the spectral radius of the iteration matrix equals the root radius of the determinant of the characteristic of polynomial by showing that
\begin{align*}
	\det(\lambda I- M(X))&= \det(\syspola) 
\end{align*}
Combining this with the corollary above and by applying \ineqref{ineq:exp_x} and the like, concludes the proof for \thmref{thm:ic_cli}.
\subsubsection{The Characteristic Polynomial of the Iteration Matrix}
The following lemma provides an explicit expression for the characteristic polynomial of iteration matrices. The proof is carried out by applying elementary determinant manipulation rules. 
\begin{lemma} \label{lemma:mat_pol_spec}
Let $M(X)$ be the matrix defined in (\ref{def:pcli}) and let $A$ be a given $d\times d$ square matrix. Then, the characteristic polynomial of $\bE M(A)$ can be expressed as the following matrix polynomial 
\begin{align}
\chi_{\bE M(A)}(\lambda) =  (-1)^{pd} \det\circpar{\lambda^p I_d - \sum_{k=0}^{p-1} \lambda^{k} \bE C_k(A)}
\end{align}
\end{lemma}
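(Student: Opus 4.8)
The plan is to work directly with the block companion matrix $\bE M(A)$ from \eqref{def:pcli} and reduce $\det(\lambda I_{pd} - \bE M(A))$ to the determinant of the $d\times d$ matrix polynomial by elementary block-column and Laplace manipulations; the factor $(-1)^{pd}$ then appears purely from the paper's sign convention $\chi_{\bE M(A)}(\lambda) = \det(\bE M(A) - \lambda I_{pd}) = (-1)^{pd}\det(\lambda I_{pd} - \bE M(A))$. Throughout I abbreviate $C_k := \bE C_k(A)$ and $L(\lambda) := \lambda^p I_d - \sum_{k=0}^{p-1}\lambda^k C_k$, so that the goal becomes $\det(\lambda I_{pd} - \bE M(A)) = \det L(\lambda)$.

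First I would write $\lambda I_{pd} - \bE M(A)$ as the $p\times p$ array of $d\times d$ blocks whose diagonal blocks are $\lambda I_d$, whose super-diagonal blocks are $-I_d$, and whose last block-row is $(-C_0,-C_1,\dots,-C_{p-2},\lambda I_d - C_{p-1})$. The key step is a single compound block-column operation: replace the first block-column $\mathcal{C}_1$ by $\mathcal{C}_1 + \lambda\mathcal{C}_2 + \lambda^2\mathcal{C}_3 + \cdots + \lambda^{p-1}\mathcal{C}_p$. Since this only adds multiples of the other block-columns to $\mathcal{C}_1$, it preserves the determinant. In each of the top $p-1$ block-rows the contribution telescopes: in block-row $i$ the only nonzero entries are $\lambda I_d$ (column $i$) and $-I_d$ (column $i+1$), so the new first block-entry is $\lambda^{i-1}(\lambda I_d) + \lambda^{i}(-I_d) = 0$. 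In the last block-row the same combination collapses to $\lambda^p I_d - \sum_{k=0}^{p-1}\lambda^k C_k = L(\lambda)$. Hence the first block-column becomes $(0,\dots,0,L(\lambda))^\top$, with the remaining block-columns left untouched.

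Next I would evaluate the determinant of this reduced matrix by the generalized Laplace expansion along its first $d$ columns. That block-column vanishes except for the block $L(\lambda)$ in the last block-row, so only one index set of rows contributes, namely $I=\{(p-1)d+1,\dots,pd\}$. Its $d\times d$ minor is $\det L(\lambda)$, and the complementary minor is the submatrix on block-rows $1,\dots,p-1$ and block-columns $2,\dots,p$, which is block lower-triangular with $-I_d$ on the diagonal and therefore has determinant $(-1)^{d(p-1)}$. What remains is the Laplace sign $(-1)^{s(I)}$ with $s(I)=\sum_{i\in I} i + \tfrac{d(d+1)}{2}$; a direct count gives $\sum_{i\in I} i = (p-1)d^2 + \tfrac{d(d+1)}{2}$, so $s(I) = (p-1)d^2 + d(d+1) \equiv d(p-1) \pmod 2$ using $d^2\equiv d$ and that $d(d+1)$ is even. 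Thus the two factors $(-1)^{d(p-1)}$ cancel and $\det(\lambda I_{pd} - \bE M(A)) = \det L(\lambda)$; multiplying by $(-1)^{pd}$ gives the claimed formula for $\chi_{\bE M(A)}(\lambda)$.

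The main obstacle is the sign bookkeeping in this last step: one must correctly evaluate the Laplace exponent $s(I)$ together with the $(-1)^{d(p-1)}$ arising from the $p-1$ factors $\det(-I_d)=(-1)^d$, and confirm their cancellation for all $d$ and $p$. As a sanity check I would verify $p=2$ directly, where the reduced matrix is $\left(\begin{smallmatrix} 0 & -I_d \\ L(\lambda) & \lambda I_d - C_1\end{smallmatrix}\right)$: a block-row swap contributes $(-1)^{d^2}=(-1)^d$ and the block-triangular factor contributes $\det(-I_d)=(-1)^d$, so the product is $\det L(\lambda)$, consistent with $(-1)^{pd}=1$. An alternative that sidesteps the Laplace signs entirely is to note that for $\lambda\neq 0$ the leading $(p-1)d\times(p-1)d$ block is block upper-triangular and invertible with determinant $\lambda^{(p-1)d}$, take the Schur complement, and then invoke that both sides are polynomials in $\lambda$, so their agreement off $\lambda=0$ forces agreement everywhere.
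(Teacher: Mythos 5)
Your proof is correct, and it follows the same basic strategy as the paper's --- elementary block-column manipulations that collapse $\det$ of the $pd\times pd$ companion form to the $d\times d$ determinant $\det L(\lambda)$ --- but the execution differs in a way worth noting. The paper clears the \emph{last block row} by repeatedly adding $\lambda^{-1}$ times one block column into the next, which requires $\lambda\neq 0$ and forces a final appeal to continuity to cover $\lambda=0$; you instead clear the \emph{first block column} by adding $\lambda^{j-1}$ times the later columns into it, a purely polynomial operation, so your identity holds identically in $\lambda$ with no limiting argument. The price you pay is the generalized Laplace expansion and its sign bookkeeping, which you carry out correctly: $s(I)=(p-1)d^2+d(d+1)\equiv (p-1)d \pmod 2$ indeed cancels against the complementary minor $\det(-I_d)^{p-1}=(-1)^{(p-1)d}$, and your $p=2$ sanity check is consistent. (The paper avoids Laplace signs altogether because after its elimination the matrix is block triangular with $-\lambda I_d$ diagonal blocks, giving the factor $\det(-\lambda I_d)^{p-1}$ directly, at the cost of the $\lambda=0$ patch.) Your Schur-complement alternative is essentially the paper's argument in disguise --- invertibility of the leading block for $\lambda\neq 0$ plus agreement of polynomials replaces the paper's continuity step --- so all three routes are sound, and yours has the mild advantage of treating all $\lambda$ uniformly.
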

\begin{proof}
As usual, for the sake of readability we omit the functional dependency on $A$, as well as the expectation operator symbol. For $\lambda\neq0$ we get, 
\begin{align*}
\chiM(\lambda) &= \det(M-\lambda I_{pd}) \\
&= \det\circpar{\begin{array}{cccc|c} -\lambda I_d & I_d &&& \\&-\lambda I_d & I_d&&\\ &&&&\\ && \ddots&\ddots\\&&& \\&&& -\lambda I_d & I_d\\ \hline C_{0}&&\dots&C_{p-2}&C_{p-1}-\lambda I_d \end{array}}\\
&= 
\det\circpar{\begin{array}{cccc|c} -\lambda I_d & I_d &&& \\&-\lambda I_d & I_d&&\\ &&&&\\ && \ddots&\ddots\\&&& \\&&& -\lambda I_d & I_d\\ \hline 0_d&C_{1}+\lambda^{-1}C_0&\dots&C_{p-2}&C_{p-1}-\lambda I_d \end{array}}\\
&= \det\circpar{\begin{array}{cccc|c} -\lambda I_d & I_d &&& \\&-\lambda I_d & I_d&&\\ &&&&\\ && \ddots&\ddots\\&&& \\&&& -\lambda I_d & I_d\\ \hline 0_d&0_d&
C_2 + \lambda^{-1}C_{1}+\lambda^{-2}C_0\dots &C_{p-2}&C_{p-1}-\lambda I_d \end{array}}
\end{align*}
\begin{align*}
&= \det\circpar{\begin{array}{cccc|c} -\lambda I_d & I_d &&& \\&-\lambda I_d & I_d&&\\ &&&&\\ && \ddots&\ddots\\&&& \\&&& -\lambda I_d & I_d\\ \hline 0_d&\dots &0_d&&
\sum_{k=1}^p \lambda^{k-p} C_{k-1} -\lambda I_d \end{array}}\\
&= \det(-\lambda I_d)^{p-1} \det\circpar{\sum_{k=1}^p \lambda^{k-p} C_{k-1} -\lambda I_d }
\end{align*}
\begin{align*}
&= (-1)^{(p-1)d} \det\circpar{\sum_{k=1}^p \lambda^{k-1} C_{k-1} -\lambda^p I_d}\\
&= (-1)^{pd} \det\circpar{\lambda^p I_d - \sum_{k=0}^{p-1} \lambda^{k} C_k}
\end{align*}
By continuity we have that the preceding equality also holds $\lambda=0$, as well.
\end{proof}

\subsection{Proof of \thmref{thm:conv_correct}} \label{section:conv_correct}
We prove that consistent $p$-SCLI optimization algorithms must satisfy conditions (\ref{consis_1_syspol}) and (\ref{consis_2_syspol}). The reverse implication is proven by reversing the steps of the following proof.\\

First, note that (\ref{consis_2_syspol}) is an immediate consequence of \corref{thm:conv_rate_ulb}, according to which  $p$-SCLIs converge if and only if the the root radius of the characteristic polynomial is strictly smaller than 1. As for (\ref{consis_2_syspol}), let $\cA\eqdef(\syspola,N(X))$ be a consistent $p$-SCLI optimization algorithm over $\posdefun{d}{\Sigma}$ and let $\quadab{A,\bb}\in\posdefun{d}{\Sigma}$ be a quadratic function. Furthermore, let us denote the corresponding iteration matrix by $M(X)$ as in (\ref{def:pcli}). By  \thmref{thm:conv_cli}, for any initialization point we have $$\bE \bz^k \to\circpar{I-\bE M(A)}^{-1} U \bE [N(A)]\bb $$ where $U$ is as defined in (\ref{def:u}), i.e.,
\begin{align*}
U\eqdef  (\underbrace{0_d,\dots,0_d}_{p-1 \text{ times}}, I_d)^\top\in \reals^{pd\times d}
\end{align*}
For the sake of readability we omit the functional dependency on $A$, as well as the expectation operator symbol. Combining this with \eqref{eq:conv_sol_z} yields
\begin{align*}
U^\top \circpar{I- M}^{-1} U N \bb= - A^{-1}\bb
\end{align*}
Since this holds for any $\bb\in\reals^d$, we get
\begin{align*}
U^\top \circpar{I-M}^{-1} U N= - A^{-1}
\end{align*}
Evidently, $N$ is an invertible matrix. Therefore,
\begin{align} \label{eq:conv_correct}
U^\top \circpar{I- M}^{-1} U = - ( N A) ^{-1} 
\end{align}
Now, recall that
\begin{align*}
M = \mymat{0_d & I_d &&& \\&0_d & I_d&&\\ &&&&\\ && \ddots&\ddots\\&&& \\&&& 0_d & I_d\\C_{0}&&\dots&C_{p-2}&C_{p-1} }
\end{align*} 
where $C_j$ denote the coefficient matrices. We partition $M$ as follows
\begin{align*}
\circpar{\begin{array}{c|cccc} M_{11}  &M_{12}\\ \hline M_{21}& M_{22} \end{array}} 
\eqdef\circpar{\begin{array}{cccc|c} 0_d & I_d &&& \\&0_d & I_d&&\\ &&&&\\ && \ddots&\ddots\\&&& \\&&& 0_d & I_d\\ \hline C_{0}&&\dots&C_{p-2}&C_{p-1} \end{array}}
\end{align*}
The l.h.s of \eqref{eq:conv_correct} is in fact the inverse of the Schur Complement of $I-M_{11}$ in $I-M$, i.e., 
\begin{align}
(I-M_{22} - M_{21}(I-M_{11})^{-1}M_{12})^{-1}   &= -(NA)^{-1}\nonumber\\
I-M_{22} - M_{21}(I-M_{11})^{-1}M_{12}   &= -NA \nonumber\\
M_{22} + M_{21}(I-M_{11})^{-1}M_{12}   &= I+NA \label{eqst1} 
\end{align}
Moreover, it is straightforward to verify that
\begin{align*}
\circpar{I-M_{11}}^{-1} = \mymat{I_d && I_d&I_d\\&I_d && I_d\\&&\ddots \\&&&I_d  }
\end{align*}
Plugging in this into (\ref{eqst1}) yields
\begin{align*} 
\sum_{i=0}^{p-1} C_i = I+NA
\end{align*}
equivalently, 
\begin{align} 
\syspol{}{1,A}= -NA
\end{align}
Thus concludes the proof.

\subsection{Proof of \lemref{lem:comp_poly}} \label{proof:lem:comp_poly}
First, we prove the following Lemma. Let us denote 
\begin{align*}
	q_r^*(z) \eqdef \circpar{z-(1-\sqrt[p]{r})}^p
\end{align*}
where $r$ is some non-negative constant.
\begin{lemma} \label{lem:eco_poly}
Suppose $q(z)$ is a monic polynomial of degree $p$ with complex coefficients. Then,
\begin{align*}
\rho(q(z))\le \absval{\sqrt[p]{\absval{q(1)}}-1} \iff q(z)=q_{\absval{q(1)}}^*(z)
\end{align*}
\end{lemma}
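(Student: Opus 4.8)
The plan is to reduce everything to elementary (reverse) triangle inequalities applied one factor at a time, after factoring $q$ over $\bC$. Write $q(z)=\prod_{i=1}^p(z-z_i)$, so that $\rho(q(z))=\max_i|z_i|$ and $\absval{q(1)}=\prod_{i=1}^p|1-z_i|$; abbreviate $\rho=\rho(q(z))$ and $r=\absval{q(1)}$. The conceptual core is that the bound $\absval{\sqrt[p]{r}-1}\le\rho$ in fact holds for \emph{every} monic degree-$p$ polynomial, so that in the forward direction the hypothesis $\rho\le\absval{\sqrt[p]{r}-1}$ instantly upgrades to an equality, which then pins down $q$.

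To establish this universal bound I would use, for each root, the two-sided estimate $1-|z_i|\le|1-z_i|\le 1+|z_i|$, the right half being the triangle inequality and the left its reverse form. Multiplying the right inequalities and using $|z_i|\le\rho$ gives $r\le(1+\rho)^p$, hence $\sqrt[p]{r}-1\le\rho$. When $\rho<1$ every factor $1-|z_i|$ is positive, so multiplying the left inequalities and using $|z_i|\le\rho$ gives $r\ge(1-\rho)^p$, hence $1-\sqrt[p]{r}\le\rho$; when $\rho\ge 1$ this second inequality is trivial. Combining the two cases yields $\absval{\sqrt[p]{r}-1}\le\rho$ unconditionally. The $(\Leftarrow)$ direction is then a direct computation: if $q=q_r^*$ with $r=\absval{q(1)}$, then $q$ has the single root $1-\sqrt[p]{r}$ of multiplicity $p$, one checks $q(1)=(\sqrt[p]{r})^p=r\ge 0$ so the defining constant is consistent, and $\rho=\absval{\sqrt[p]{r}-1}$, giving equality.

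For the $(\Rightarrow)$ direction and the equality analysis — the main substance of the lemma — I would assume $\rho\le\absval{\sqrt[p]{r}-1}$, combine it with the universal bound to force $\rho=\absval{\sqrt[p]{r}-1}$, and then read off that every inequality in the relevant product chain is an equality. I would split on whether $\sqrt[p]{r}\ge 1$ or $\sqrt[p]{r}<1$. In the first case, equality in $\prod(1+|z_i|)=(1+\rho)^p$ and in $\prod|1-z_i|=\prod(1+|z_i|)$ forces, factor by factor (all factors positive), $|z_i|=\rho$ and $|1-z_i|=1+|z_i|$; the latter is the equality case of the triangle inequality, which forces each $z_i$ to be a non-positive real, so $z_i=-\rho$ for all $i$ and $q=(z+\rho)^p=q_r^*$. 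In the second case (with $\rho<1$), equality in $\prod(1-|z_i|)=(1-\rho)^p$ forces $|z_i|=\rho$ and $|1-z_i|=1-|z_i|$, the equality case of the reverse triangle inequality, which forces each $z_i$ to be a non-negative real, so $z_i=\rho$ and $q=(z-\rho)^p=q_r^*$.

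The delicate point, and the step I expect to require the most care, is the degenerate boundary $r=0$ (a root at $1$, with $\rho=1$): there the product equalities can hold through a single vanishing factor without forcing the remaining factors to be extremal — for instance $z(z-1)$ when $p=2$ meets the inequality yet is not $(z-1)^2$ — so tightness fails to propagate and the equality characterization genuinely needs the standing hypothesis $q(1)\neq 0$. This is precisely the regime in which \lemref{lem:comp_poly} is later invoked (there $q(1)=-\nu a>0$), so I would carry out the equality analysis under $r>0$, where all factors are strictly positive and the extremality propagates to each one, and close the loop by verifying that the resulting $q=q_r^*$ reproduces the correct constant $\absval{q(1)}=r$.
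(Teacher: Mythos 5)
Your proof is correct and follows essentially the same route as the paper's: factor $q$ over $\bC$, apply the triangle and reverse triangle inequalities factor by factor, and use the equality case of the triangle inequality to force every root onto the real point $1-\sqrt[p]{r}$; your reorganization (universal bound $\rho(q)\ge\absval{\sqrt[p]{r}-1}$ first, then equality propagation) is only cosmetic. Your flag on the degenerate boundary $r=0$ is a genuine catch rather than mere caution: the equivalence as stated fails there for $p\ge 2$ (e.g.\ $q(z)=z(z-1)$ has $\rho(q)=1=\absval{\sqrt{0}-1}$ yet $q\neq(z-1)^2$), and the paper's proof glosses over this by asserting that the case $r\in[0,1)$ ``follows along the same lines,'' whereas the factor-by-factor extremality argument breaks precisely when a single factor of the product vanishes. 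Restricting the equality analysis to $r>0$ is the right repair and costs nothing downstream, since \lemref{lem:comp_poly} is only ever invoked with $q(1)=-\nu a>0$.
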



\begin{proof}
As the $\Leftarrow$ statement is clear, we prove here the only the $\Rightarrow$ part.\\
By the fundamental theorem of algebra $q(z)$ has $p$ roots. Let us denote these roots by $\zeta_1,\zeta_2,\dots,\zeta_p\in\bC$ . Equivalently,
\begin{align*}
q(z) = \prod_{i=1}^p (z-\zeta_i)
\end{align*}
Let us denote $r\eqdef\absval{q(1)}$. If $r\ge 1$ we get 
\begin{align} \label{ineq:ttt}
r &= \absval{\prod_{i=1}^p (1-\zeta_i)} = \prod_{i=1}^p \absval{1-\zeta_i} \le \prod_{i=1}^p (1+\absval{\zeta_i}) \nonumber\\&\le \prod_{i=1}^p (1+\absval{\sqrt[p]{r}-1}) 
= \prod_{i=1}^p (1+ \sqrt[p]{r}-1) =  r
\end{align}
Consequently, \ineqref{ineq:ttt} becomes an equality. Therefore, 
\begin{align} \label{eq:eq_roots_poly}
\absval{1-\zeta_i}=1+\absval{\zeta_i}=\sqrt[p]{r},\quad \forall i\in[p]
\end{align}
Now, for any two complex numbers $w,z\in\bC$ it holds that 
\begin{align*}
\absval{w+z}=\absval{w}+\absval{z} \iff \text{Arg}(w)=\text{Arg}(z)
\end{align*}
Using this fact in the first equality of \eqref{eq:eq_roots_poly}, we get that $\text{Arg}(-\zeta_i)=\text{Arg}(1)=0$, i.e., $\zeta_i$ are negative real numbers. Writing $-\zeta_i$ in the second equality of  \eqref{eq:eq_roots_poly} instead of $\absval{\zeta_i}$, yields $1-\zeta_i=\sqrt[p]{r}$, concluding this part of the proof.\\

The proof for $r\in[0,1)$ follows along the same lines, only this time we use the reverse triangle inequality,
\begin{align*}
r &= \prod_{i=1}^p\absval{ 1-\zeta_i} \ge 
\prod_{i=1}^p \circpar{1 - \absval{ \zeta_i} }
\ge 
\prod_{i=1}^p \circpar{1 - \absval{ \sqrt[p]{r}-1} }\\
&= \prod_{i=1}^p \circpar{1 - (1- \sqrt[p]{r})}=r 
\end{align*}
Note that in the first inequality, we used the fact that $r\in[0,1)\implies \absval{\zeta_i}\le1$ for all $i$.
\end{proof}

The proof for \lemref{lem:comp_poly} now follows easily. In case $q(1)\ge0$, if $q(z)= \circpar{z-(1-\sqrt[p]{r})}^p$ then, clearly,
\begin{align*}
	\rho(q(z))&=\rho\circpar{(z-(1-\sqrt[p]{r})^p)}= \absval{1-\sqrt[p]{r}}
\end{align*}
Otherwise, according to \lemref{lem:eco_poly}
\begin{align*}
	\rho(q(z))&>\absval{1-\sqrt[p]{r}}
\end{align*}
In case $q(1)\le0$, we must use the assumtpoin that the coefficients are reals (see Remark \ref{remark:coefficients_must_be_reals}), whereby the mere fact that
\begin{align*}
\lim_{z\in\reals,  z\to\infty} q(z) = \infty 
\end{align*}
combined with the Mean-Value theorem implies $\rho(q(z))\ge1$. This concludes the proof.
\begin{remark} \label{remark:coefficients_must_be_reals}
The requirement that the coefficients of $q(z)$ should be real  is inevitable.  To see why, consider the following polynomial, 
\begin{align*}
u(z)  = \circpar{z-\circpar{1-0.5e^{\frac{i\pi}{3}}}}^3
\end{align*}
Although $u(1)=\circpar{1-\circpar{1-1/2e^{\frac{i\pi}{3}}}}^3 = -1/8\le0$, it holds that $\rho(u(1))<1$. Indeed, not all the coefficients of $u(z)$ are real. Notice that the claim does hold for degree $\le3$, regardless of the additional assumption on the coefficients of $u(z)$.
\end{remark}

\subsection{Bounding the spectral radius of diagonal inversion matrices from below using scalar matrices} \label{ap:reduction}
We prove a lower bound on the convergence rate of $p$-SCLI optimization algorithm with diagonal inversion matrices. In particular, we show that for any $p$-SCLI optimization algorithm whose inversion matrix is diagonal there exists a quadratic function for which it does not perform better than $p$-SCLI optimization algorithms with scalar inversion matrix. We prove the claim for $d=2$. The general case follows by embedding the 2-dimensional case as a principal sub-matrix in some higher dimensional matrix in $\posdef{d}{[\mu,L]}$.\\

Let $\cA\eqdef(M(X),N(X))$ be a $p$-SCLI optimization algorithm and assume that $N(X)$ is a diagonal matrix. Define the following positive definite matrix
\begin{align}
B = \mymat{ \frac{L+\mu}{2} & \frac{L-\mu}{2} \\ \frac{L-\mu}{2} & \frac{L+\mu}{2}}
\end{align}
And note that $\spec{B}=\set{\mu,L}$. As usual, we wish to derive a lower bound on $\rho(M(B))$. To this end, denote 
\begin{align*}
N\eqdef N(B) = \mymat{\alpha & 0 \\ 0 & \beta}
\end{align*}
where $ \alpha,\beta\in\reals$. By a straightforward calculation we get that the eigenvalues of $-NB$ are
\begin{align} \label{eq:eigs_of_diagonal_inversion}
\sigma_{1,2}(\alpha,\beta) &=  \frac{-(\alpha+\beta)(L+\mu)}{4} \pm
\sqrt{ \circpar{\frac{(\alpha+\beta)(L+\mu)}{4}}^2 -  \alpha\beta L\mu}\nonumber\\
&=\frac{-(\alpha+\beta)(L+\mu)}{4} \pm
\sqrt{ (\alpha+\beta)^2 \frac{(L-\mu)^2}{16} +  \frac{1}{4} (\alpha-\beta)^2  L\mu}
\end{align}
Using similar arguments to the ones which were applied in the scalar case, we get that both eigenvalues of $-NB$ must be strictly positive as well as satisfy
\begin{align} \label{ineq:spec_diagonal}
\rho(M) \ge \min_{\alpha,\beta}\max\left\{ \absval{\sqrt[p]{\sigma_1(\alpha,\beta)}-1},\absval{\sqrt[p]{\sigma_2(\alpha,\beta) }-1}\right\}
\end{align}
\eqref{eq:eigs_of_diagonal_inversion} shows that the minimum of the preceding is obtained for $\nu=\frac{\alpha+\beta}{2}$, which simplifies to 
\begin{align*}
\max\left\{ \absval{\sqrt[p]{\sigma_1(\alpha,\beta)}-1},\absval{\sqrt[p]{\sigma_2(\alpha,\beta) }-1}\right\} &\ge 
\max\left\{ \absval{\sqrt[p]{\sigma_1(\nu,\nu)}-1},\absval{\sqrt[p]{\sigma_2(\nu,\nu) }-1}\right\}\\
&=
\max\left\{ \absval{\sqrt[p]{-\nu \mu}-1},\absval{\sqrt[p]{-\nu L }-1}\right\}
\end{align*}
The rest of the analysis is carried out similarly to the scalar case, resulting in 
\begin{align*}
\rho(M(B)) \ge \frac{\sqrt[p]{Q}-1}{\sqrt[p]{Q}+1}
\end{align*}

\end{appendices}

\end{document}